\theoremstyle{definition}
\newtheorem{remark}[theorem]{Remark}
\theoremstyle{definition}
\newtheorem{example}[theorem]{Example}
\newtheorem{obs}[theorem]{Observation}
\theoremstyle{definition}
\numberwithin{figure}{section}  
\numberwithin{table}{section}   
\definecolor{red}{rgb}{.8,0,0}
\definecolor{blu}{rgb}{0,0,1}
\def\mtx#1{\begin{bmatrix} #1 \end{bmatrix}}
\newcommand{\R}{{\mathbb R}}
\newcommand{\C}{{\mathbb C}}
\newcommand{\Cnn}{\C^{n\times n}}
\newcommand{\Rnn}{\R^{n\times n}}
\newcommand{\ba}{{\bf a}}
\newcommand{\bx}{{\bf x}}
\newcommand{\bb}{{\bf b}}
\newcommand{\bc}{{\bf c}}
\newcommand{\bv}{{\bf v}}
\newcommand{\by}{{\bf y}}
\newcommand{\bz}{{\bf z}}
\newcommand{\bone}{\mathbb{1}} 
\newcommand{\J}{\mathbb{J}}
\newcommand{\I}{\mathbb{I}} 
\newcommand{\boldzero}{\bf{0}} 
\newcommand{\D}{\mathcal D}
\newcommand{\dig}{\Gamma}
\newcommand{\DL}{{\D^L}}
\newcommand{\DQ}{{\D^Q}}
\newcommand{\spec}{\operatorname{spec}}
\newcommand{\dspec}{\operatorname{spec}_{\D}}
\newcommand{\dLspec}{\operatorname{spec}_{\DL}}
\newcommand{\dQspec}{\operatorname{spec}_{\DQ}}
\newcommand{\dev}{\partial}
\newcommand{\dlev}{\partial^L}
\newcommand{\dQev}{\partial^Q}
\newcommand{\diag}{\operatorname{diag}}
\newcommand{\diam}{\operatorname{diam}}
\newcommand{\A}{\mathcal A}
\newcommand{\rank}{\operatorname{rank}}
\newcommand{\cp}{\, \Box\,}
\newcommand{\OL}{\overline}
\newcommand{\trans}{t} 
\newcommand{\mult}{\operatorname{mult}}
\newcommand{\gmult}{\operatorname{gmult}}
\newcommand{\JCF}{\operatorname{J}}
\newcommand{\directp}{\times}
\newcommand{\strongp}{\boxtimes}
\newcommand{\cpj}{\,{\scriptsize{\fbox{$\J$}}} \,}
\newcommand{\cpi}{\,{\scriptsize{\fbox{$\I$}}} \,}
\newcommand\lexp{\,{\textcircled{\scriptsize{L}}} \,}
\newcommand{\ds}{\displaystyle}
\newcommand{\bit}{\begin{itemize}}
\newcommand{\eit}{\end{itemize}}
\newcommand{\ben}{\begin{enumerate}}
\newcommand{\een}{\end{enumerate}}
\newcommand{\beq}{\begin{equation}}
\newcommand{\eeq}{\end{equation}}
\newcommand{\bea}{\begin{eqnarray*}}
\newcommand{\eea}{\end{eqnarray*}}
\newcommand{\bean}{\begin{eqnarray}}
\newcommand{\eean}{\end{eqnarray}}
\newcommand{\bpf}{\begin{proof}}
\newcommand{\epf}{\end{proof}}
\newcommand{\x}{\times}
\newcommand{\lam}{\lambda}
\newcommand{\lp}{\left(}
\newcommand{\rp}{\right)}
\newcommand{\lb}{\left[}
\newcommand{\rb}{\right]}
\begin{document}

\title{Spectral theory of products of digraphs}
\author{Minerva Catral\thanks{Department of Mathematics, Xavier
University, Cincinnati, OH 45207, USA (catralm@xavier.edu).}
\and
Lorenzo Ciardo\thanks{Department of Mathematics, University of Oslo, Oslo, NO 0316, Norway (lorenzci@math.uio.no).}
\and  Leslie
Hogben\thanks{Department of Mathematics, Iowa State University,
Ames, IA 50011, USA and American Institute of Mathematics, 600 E. Brokaw Road, San Jose, CA 95112, USA
(hogben@aimath.org).}\and
Carolyn Reinhart\thanks{Department of Mathematics, Iowa State University, Ames, IA 50011, USA (reinh196@iastate.edu).}
 }

\maketitle

 

\maketitle

\begin{abstract}
A unified approach to the determination of eigenvalues and eigenvectors of specific matrices associated with directed graphs is presented.  Matrices studied include the distance matrix, distance Laplacian, and distance signless Laplacian, in addition to the adjacency matrix, Laplacian, and signless Laplacian.  Various sums of Kronecker products of nonnegative matrices are introduced to model the  Cartesian and lexicographic products of digraphs.  The Jordan canonical form is applied extensively to the analysis of spectra and eigenvectors.   The analysis shows that Cartesian products provide a method for building infinite families of transmission regular digraphs with few distinct distance eigenvalues.
\end{abstract}

\noindent{\bf Keywords.} distance matrix; distance Laplacian; distance signless Laplacian; digraph; directed graph; Kronecker product; Jordan canonical form.
\medskip

\noindent{\bf AMS subject classifications.} 
15A18, 
05C20, 
05C50, 
05C12, 
05C76, 
15A21, 
15B48. 

 \medskip\medskip

\section{Introduction}\label{sintro}

{\em Spectral graph theory} has traditionally been the study of the relation between properties of (undirected) graphs and the spectrum of the adjacency matrix, Laplacian matrix, or signless Laplacian of the graph \cite{BH}. The distance matrix of a graph was introduced  in the study of a data communication problem \cite{GP71} and has attracted a lot of interest recently (see, e.g, \cite{AH14} for a survey on distance spectra of graphs).  Recently the distance Laplacian and distance signless Laplacian of a graph have been studied (see, for example, \cite{AH13}).  Spectral theory of digraphs is a developing area of research but  so far focused primarily on the spectral radius of the adjacency matrix  (see \cite{B10} for a survey on spectra of digraphs).

A graph $G = (V(G),E(G))$ consists of a finite set  $V(G) = \{v_1, \dots, v_n\}$ of vertices and a set  $E(G)$ of two-element subsets $\{v_i, v_j\}$ called {\em edges}.   A digraph $\dig = (V(\dig),E(\dig))$ consists of a finite set  $V(\dig) = \{v_1, \dots, v_n\}$ of vertices and a set  $E(\dig)$ of ordered pairs of distinct vertices $(v_i, v_j)$ called {\em arcs}. Observe that neither a graph nor digraph can have a loop (an edge or arc with the vertices equal).  For a digraph $\dig$ (respectively, graph $G$), a {\em dipath} (respectively, {\em path}) from $u$ to $v$ is a sequence of vertices and arcs (respectively, edges) 
$u=w_1, e_1=(w_1,w_2),w_2, e_2=(w_2,w_3),\dots,w_k, e_k=(w_k,w_{k+1}),w_{k+1}=v$ (in a path, the arcs are replaced by unordered edges). A digraph (or graph) of order at least two is {\em strongly connected} (or {\em connected}) if for every pair of vertices $u, v$, there is a dipath (or path) from $u$ to $v$.  

 The {\em adjacency matrix} of $\dig$  (or $G$), denoted by $\A(\dig)$ (or $\A(G)$), is the $n \times n$ matrix with $(i,j)$ entry equal to $1$ if  $(v_i, v_j)$ (or $\{v_i, v_j\}$) is an arc (or  edge) of $\dig$ (or $G$), and $0$ otherwise. The {\em Laplacian matrix} of $\dig$ (or $G$), denoted by $L(\dig)$ (or $L(G)$), is defined as $D(\dig) - \A(\dig)$ (or $D(G) - \A(G)$), where $D(\dig)$  (or $D(G)$) is the diagonal matrix having the $i$-th diagonal entry equal to the {\em out-degree} (or {\em degree}) of the vertex $v_i$, i.e., the number of arcs (or edges) starting at $v_i$.  The matrix $D(\dig) + \A(\dig)$ (or $D(G) + \A(G)$) is called the {\em signless Laplacian matrix} of  $\dig$ (or $G$) and is denoted by $Q(\dig)$ (or $Q(G)$). For a strongly connected digraph $\dig$ (or a connected graph $G$), the {\em distance matrix}, denoted $\D(\dig)$ (or $\D(G)$), is the $n \times n$ matrix with $(i,j)$ entry equal to $d(v_i,v_j)$,  the {\em distance} from  $v_i$ to $v_j$, i.e., the length of a shortest dipath (or path) from $v_i$ to $v_j$;  use of a distance matrix  implies the digraph (or graph) is strongly connected (or connected). 
 The {\em transmission} of vertex $v_i$ is defined as $\trans(v_i) =  \sum_{j=1}^n d(v_i,v_j)$. The transmission of a vertex in a digraph could have been called the out-transmission because it is the sum of the out-distances, i.e., the distances from $v_i$ to other vertices. 
 The {\em distance Laplacian} matrix and the {\em distance signless Laplacian} matrix, denoted by $\DL$ and $\DQ$, respectively, are defined by  $\DL(\dig) = T(\dig)  - \D(\dig)$  and $\DQ(\dig) = T(\dig) + \D(\dig)$, where $T(\dig)$  is the diagonal matrix with $\trans(v_i)$  as the $i$-th diagonal entry;   $\DL(G)$ and $\DQ(G)$ are defined analogously.  A digraph is {\em out-regular} or {\em $r$-out-regular} if every vertex has out-degree $r$.  A strongly connected digraph is {\em transmission regular} or {\em $t$-transmission regular}  if every vertex has transmission $t$.  The terms {\em regular}, {\em $r$-regular},  {\em transmission regular}, and {\em $t$-transmission regular} are defined analogously for graphs.

 For a real $n \times n$ matrix  $M$, the {\em algebraic multiplicity} $\mult_M(z)$ of a number $z\in\C$ with respect to $M$ is the number of times $(x-z)$ appears as a factor in the characteristic polynomial $p(x)$ of $M$, and the {\em geometric multiplicity}  $\gmult_M(z)$ is the dimension of the eigenspace $ES_M(z)$ of $M$ relative to $z$ ($\mult_M(z)=\gmult_M(z)=0$ if $z$ is not an eigenvalue of $M$).   The {\em spectrum} of $M$, denoted by $\spec(M)$,  is the multiset whose elements are the $n$ (complex) eigenvalues of $M$ (i.e., the number of times each eigenvalue appears in $\spec(M)$ is its algebraic multiplicity).  The spectrum is often written as $\spec(M)=\{\lam_1^{(m_1)},\dots,\lam_q^{(m_q)}\}$ where $\lam_1,\dots, \lam_q$ are the distinct eigenvalues of $M$ and $m_1,\dots,m_q$ are the (algebraic) multiplicities.

There are several spectra associated with a digraph $\dig$, namely, $\spec_\A(\dig)=\spec(\A(\dig))$ ({\em adjacency spectrum}), $\spec_L(\dig)=\spec(L(\dig))$ ({\em Laplacian spectrum}), $\spec_Q(\dig)=\spec(Q(\dig))$ ({\em signless Laplacian spectrum}), $\dspec(\dig)=\spec(\D(\dig))$ ({\em distance spectrum}), $\dLspec(\dig)=\spec(\DL(\dig))$ ({\em distance Laplacian spectrum}), and $\dQspec(\dig)=\spec(\DQ(\dig))$ ({\em distance signless Laplacian spectrum}). For a graph $G$, the relevant spectra are  $\spec_\A(G)=\spec(\A(G))$, $\spec_L(G)=\spec(L(G))$, $\spec_Q(G)=\spec(Q(G))$, $\dspec(G)=\spec(\D(G))$, $\dLspec(G)=\spec(\DL(G))$, and $\dQspec(G)=\spec(\DQ(G))$, with the same terminology.

This paper contributes to the study of the spectra of digraphs,  particularly by presenting new results on eigenvalues and eigenvectors of the distance matrix of various products of digraphs.   In Section \ref{scartprod}, we analyze constructions of matrices (sums of Kronecker products) that produce  the  adjacency and distance matrices. We use the Jordan canonical form to  derive formulas for the spectra of these constructions in terms of the spectra of the original matrices, and apply these results to determine the   adjacency spectrum of a Cartesian product of two digraphs  in terms of  the adjacency    spectra of the digraphs, and to determine the     distance  spectrum of a Cartesian product of two transmission regular digraphs  in terms of  the  distance spectra of the digraphs. These formulas show that Cartesian products provide a method for building infinite families of transmission regular digraphs with few distinct distance eigenvalues; this is discussed in Section \ref{sSRD}. In some cases we establish formulas for the Jordan canonical form, geometric multiplicities of eigenvalues, or eigenvectors  of the constructed matrix.    In Section \ref{slexprod}, we investigate the spectra of lexicographic products of digraphs by similar methods. Section \ref{sDirectStrongprod} gives a brief discussion on the spectra of the direct and strong products.

In the remainder of this introduction, we define various digraph products and the matrix constructions that describe  the matrices associated with these digraphs, and state elementary results we will use.

\subsection{Digraph products and matrix constructions}\label{s:mtx-prod}

Let $\dig$ and $\dig'$ be digraphs of orders $n$ and $n'$, respectively.  We consider the four standard associative digraph products, namely the {\em Cartesian product} $\dig \cp \dig'$, the {\em lexicographic product} $\dig \lexp \dig'$, the {\em direct product}  $\dig \directp \dig'$ and the {\em strong product} $\dig \strongp \dig'$ \cite{H18}. Each has vertex set $V(\dig) \times V(\dig')$ and their arc sets are:
\[
\begin{array}{lcl}
E(\dig \cp \dig') & = & \{ ((x,x'),(y,y') )\ | \  x'=y' \mbox{ and } (x,y) \in E(\dig), \mbox{ \bf or }  x=y \mbox{ and  } (x',y')  \in E(\dig') \},\\
E(\dig \lexp \dig') & = & \{ ((x,x'),(y,y') )\ | \  (x,y) \in E(\dig), \mbox{ \bf or }  x=y \mbox{ and  } (x',y')  \in E(\dig')  \},\\
E(\dig\directp\dig') & = & \{ ((x,x'),(y,y') )\ | \ (x,y)  \in E(\dig) \mbox{ and } (x',y') \in E(\dig') \}, \mbox{ and }\\ 
E(\dig\strongp \dig') & =  & E(\dig\cp\dig') \cup E(\dig\directp\dig').
\end{array}
\]

Rather than  establishing spectral results just for the matrices associated with these digraph products, we develop a general theory of the spectra of matrices constructed in a specified form  as a sum of Kronecker products of matrices with the identity or with the all ones matrix.
 The {\em Kronecker product} of an $n\x n$ matrix $A=[a_{ij}]$ and a $n'\x n'$ matrix $A'$ denoted by $A\otimes A'$,
is the $nn'\x nn'$ block matrix
\[A\otimes A'=
\mtx{
a_{11}A'&a_{12}A'&\cdots&a_{1n}A'\\
a_{21}A'&a_{22}A'&\cdots&a_{2n}A'\\
 \vdots&\vdots&\ddots&\vdots\\
 a_{n1}A'&a_{n2}A'&\cdots&a_{nn}A'}.\]
 Let $M\in\Cnn$ and $M'\in\C^{n'\times n'}$. We use the following notation: The $n\x n$ identity matrix is denoted by $\I_n$. The $n\x n$ all ones matrix  is denoted by $\J_n$. The all ones $n$-vector is denoted by $\bone$. The  all zeros matrix  is denoted by $O$. The all zeros vector is denoted by $\boldzero$. Define the matrix constructions 
 \[M\cpi M'=M\otimes \I_{n'}+\I_n\otimes M'\in\mathbb{C}^{(nn')\times (nn')},\] 
\[M\cpj M'=M\otimes \J_{n'}+\J_n\otimes M'\in\mathbb{C}^{(nn')\times (nn')},\] 
and
\[M\lexp M'=M\otimes \J_{n'}+\I_n\otimes M'\in\mathbb{C}^{(nn')\times (nn')}.\] 
Then, as in the case with graphs,  
\[\A(\dig\cp\dig')=\A(\dig) \cpi \A(\dig') 
 \mbox{ \  and \  }
 \D(\dig\cp\dig')=\D(\dig)\cpj \D(\dig').\]
 The matrix construction $M\lexp M'$  arises naturally for the adjacency matrix of the lexicographic product, because $\A(\dig\lexp\dig')=\A(\dig) \lexp \A(\dig')$ (as is the case for graphs), and has some uses for the distance matrix   $\D(\dig\lexp\dig')$, as discussed in Section  \ref{slexprod}  (in particular, see Observation \ref{obs:lex-longcycle}).

For many cases, we determine the spectrum of the construction of $M$ and $M'$ by using the construction of Jordan canonical forms of $M$ and $M'$ to obtain a triangular matrix that is similar to the construction of $M$ and $M'$.  In one case, $M\cpj M'$, we obtain a significantly stronger result, producing an explicit formula for the Jordan canonical form of the product of $M$ and $M'$  in terms of the Jordan canonical forms of $M$ and $M'$.  This allows the determination of the geometric multiplicities of the eigenvalues of the construction from the geometric multiplicities of the eigenvalues of $M$ and $M'$. We also show that such a determination is not possible for $M\cpi M'$ (see Example \ref{ex:geometric_mult_cartesian_prod_counterexample}).
In another case, $M\lexp M'$, we determine the geometric multiplicities of the eigenvalues of the construction from the geometric multiplicities of the eigenvalues of $M$ and $M'$ and the geometry of the eigenspaces of $M$ and $M'$.  

\subsection{Useful lemmas}

The following result is used throughout the paper (there are many ways it could be proved).

\begin{lemma}
\label{lem_sylvester_1645_10nov}
Consider the block matrices $E=\begin{bmatrix}
A & C\\
O & B
\end{bmatrix}
$ 
and
$F=\begin{bmatrix}
A & O\\
O & B
\end{bmatrix}
$ 
where  $A\in\Cnn$,  $B\in\C^{n'\times n'}$, and suppose that $\spec(A)\cap\spec(B)=\emptyset$. Then $E$ and $F$ are similar.
\end{lemma}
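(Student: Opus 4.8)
The plan is to produce an explicit similarity rather than to argue abstractly. I look for an invertible matrix of the block form
\[
P=\mtx{\I_n & X\\ O & \I_{n'}},
\]
where $X\in\C^{n\times n'}$ is to be chosen; every such $P$ is invertible with $P^{-1}=\mtx{\I_n & -X\\ O & \I_{n'}}$. A direct block multiplication gives
\[
P^{-1}EP=\mtx{A & AX-XB+C\\ O & B},
\]
so that $P^{-1}EP=F$ holds exactly when the off-diagonal block vanishes, i.e.\ when $X$ solves the Sylvester equation $AX-XB=-C$. The entire lemma therefore reduces to proving that this equation has a solution under the hypothesis $\spec(A)\cap\spec(B)=\emptyset$.

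The key step, and the main obstacle, is thus the solvability of the Sylvester equation. I would regard the map $\sym\colon X\mapsto AX-XB$ as a linear operator on the $nn'$-dimensional space $\C^{n\times n'}$ and show it is invertible. The cleanest route is to vectorize: writing $\operatorname{vec}$ for column stacking and using $\operatorname{vec}(A X B')=(B'^{\trans}\otimes A)\operatorname{vec}(X)$, one obtains
\[
\operatorname{vec}(\sym(X))=\bigl(\I_{n'}\otimes A-B^{\trans}\otimes \I_n\bigr)\operatorname{vec}(X).
\]
The two summands commute, since $(\I_{n'}\otimes A)(B^{\trans}\otimes \I_n)=B^{\trans}\otimes A=(B^{\trans}\otimes \I_n)(\I_{n'}\otimes A)$, so they are simultaneously triangularizable; hence the eigenvalues of $\I_{n'}\otimes A-B^{\trans}\otimes \I_n$ are precisely the differences $\lambda-\mu$ with $\lambda\in\spec(A)$ and $\mu\in\spec(B)$. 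The disjointness hypothesis guarantees that no such difference is zero, so $\sym$ is invertible and the required $X$ exists (and is unique).

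Combining the two steps completes the argument: taking $X=\sym^{-1}(-C)$ and the associated $P$ yields $P^{-1}EP=F$, so $E$ and $F$ are similar. If one prefers to avoid Kronecker products, an equivalent route replaces the vectorization step by putting $A$ and $B$ in upper-triangular (Schur or Jordan) form and solving $AX-XB=-C$ entrywise by back-substitution; at each stage a coefficient $\lambda_i-\mu_j$ must be inverted, and the spectral disjointness is exactly what ensures these are all nonzero. Either way, the hypothesis $\spec(A)\cap\spec(B)=\emptyset$ enters at precisely one point, namely the invertibility of the Sylvester operator.
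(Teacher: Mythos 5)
Your proposal is correct and follows essentially the same route as the paper: conjugating $E$ by a block-unitriangular matrix $P=\left[\begin{smallmatrix}\I & X\\ O & \I\end{smallmatrix}\right]$ and reducing the lemma to the solvability of the Sylvester equation $AX-XB=-C$ under the hypothesis $\spec(A)\cap\spec(B)=\emptyset$. The only difference is that the paper simply cites Horn and Johnson (Theorem 2.4.4.1) for the unique solvability of the Sylvester equation, whereas you prove it inline via vectorization and the spectrum of $\I_{n'}\otimes A-B^{T}\otimes\I_n$ --- a correct, self-contained expansion of the same argument.
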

\begin{proof}
The Sylvester equation $AX-XB=C$ has a unique solution $X \in \C^{n \times n'}$,  since $\spec(A)\cap\spec(B)=\emptyset$ \cite [Theorem 2.4.4.1]{HJ}. Then, $P^{-1}EP=F$ where  $P=\begin{bmatrix}
\I & -X\\
O & \I
\end{bmatrix}
$; observe that
$P^{-1}=\begin{bmatrix}
\I & X\\
O & \I
\end{bmatrix}
$.
\end{proof}
The next lemma is well known, and follows from standard facts about Kronecker products (see, for example,  \cite[Fact 11.4.16]{HLA-Ch11}).

\begin{lemma}\label{lem:kron-basis}  Let $\ba_1,\dots,\ba_k\in\R^n$ be linearly independent and let $\bb_1,\dots,\bb_{k'}\in\R^{n'}$ be linearly independent.  Then 
$\ba_i\otimes\bb_j$ for $i=1,\dots,k, j=1,\dots,k'$
are linearly independent in $\R^{nn'}$.
\end{lemma}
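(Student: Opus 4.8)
The plan is to reduce the claim to the invertibility of a Kronecker product of invertible matrices. Since $\ba_1,\dots,\ba_k$ are linearly independent in $\R^n$ we have $k\le n$, so I would extend them to a basis $\ba_1,\dots,\ba_n$ of $\R^n$; likewise extend $\bb_1,\dots,\bb_{k'}$ to a basis $\bb_1,\dots,\bb_{n'}$ of $\R^{n'}$. Let $A$ be the $n\times n$ matrix whose $i$-th column is $\ba_i$ and $B$ the $n'\times n'$ matrix whose $j$-th column is $\bb_j$; both are invertible by construction.

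Next I would invoke the standard identity $\det(A\otimes B)=(\det A)^{n'}(\det B)^{n}$, which shows $A\otimes B$ is invertible and hence has linearly independent columns. The one computational point requiring care is to identify those columns: writing a column index of $A\otimes B$ as a pair consisting of a block column $i$ and a within-block column $j$, the block structure $A\otimes B=[a_{i'i}B]$ gives that this column equals $\ba_i\otimes\bb_j$. Thus the full set $\{\ba_i\otimes\bb_j : 1\le i\le n,\ 1\le j\le n'\}$ is linearly independent, and in particular so is the sub-collection with $i\le k$ and $j\le k'$, which is exactly the desired family.

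A more self-contained alternative avoids extending to a basis: assume a dependence $\sum_{i,j}c_{ij}\,\ba_i\otimes\bb_j=\boldzero$, rewrite it via bilinearity of $\otimes$ as $\sum_i \ba_i\otimes\bw_i=\boldzero$ with $\bw_i=\sum_j c_{ij}\bb_j$, and use the identification $\ba\otimes\bb=\operatorname{vec}(\bb\ba^{\top})$ to turn this into $\sum_i \bw_i\ba_i^{\top}=O$, i.e.\ $WA^{\top}=O$ with $W=[\bw_1\ \cdots\ \bw_k]$ and $A=[\ba_1\ \cdots\ \ba_k]$. Since the $\ba_i$ are linearly independent, $A^{\top}$ is surjective, forcing $W=O$ and hence each $\bw_i=\boldzero$; linear independence of the $\bb_j$ then forces every $c_{ij}=0$.

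I do not expect a genuine obstacle here: the statement is essentially routine and both arguments are short. The only delicate point is bookkeeping — correctly matching the columns of $A\otimes B$ to the pairs $\ba_i\otimes\bb_j$ in the first approach, or keeping the index ordering straight in the vectorization identity in the second. Since the lemma is cited as following from standard Kronecker-product facts, I would record the extend-to-basis argument via the determinant formula as the cleanest route.
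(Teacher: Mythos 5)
Both of your arguments are correct, but note that the paper does not actually prove this lemma at all: it states it as well known and cites a standard reference on Kronecker products (\cite[Fact 11.4.16]{HLA-Ch11}), so any honest proof you supply is necessarily "different from the paper's." Your first route (extend to bases, apply $\det(A\otimes B)=(\det A)^{n'}(\det B)^{n}$, and identify the columns of $A\otimes B$ with the vectors $\ba_i\otimes\bb_j$) is exactly the textbook fact being cited, and your column bookkeeping is right: the column of $A\otimes B$ in block position $i$, within-block position $j$, has $i'$-th block $a_{i'i}B\mathbf{e}_j=a_{i'i}\bb_j$, i.e.\ it equals $\ba_i\otimes\bb_j$. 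Your second route is also sound and is arguably the better one to record, since it is self-contained and does not lean on the determinant formula (which itself requires proof): the dependence $\sum_{i,j}c_{ij}\,\ba_i\otimes\bb_j=\boldzero$ collapses by bilinearity to $\operatorname{vec}\bigl(WA^{\top}\bigr)=\boldzero$ with $W=[\bw_1\ \cdots\ \bw_k]$, and since $A$ has full column rank, $WA^{\top}=O$ is equivalent to $AW^{\top}=O$, whose columns lie in $\ker A=\{\boldzero\}$, giving $W=O$ and then $c_{ij}=0$ from the independence of the $\bb_j$. The only convention-sensitive step is the identity $\ba\otimes\bb=\operatorname{vec}(\bb\ba^{\top})$, which matches the paper's block convention for $\otimes$ (stacking $a_i\bb$ in blocks) together with column-stacking $\operatorname{vec}$; with those conventions fixed, both proofs are complete and there is no gap.
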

%
%


\section{Cartesian products}\label{scartprod}

In this section we derive formulas for the  adjacency and distance  spectra of a Cartesian product of two digraphs  in terms of  the adjacency and distance spectra of the digraphs under certain conditions.  In the case of out-regular digraphs for adjacency matrices, or transmission regular digraphs for distance matrices, these results extend naturally to the (distance) Laplacian and (distance) signless Laplacian matrices.
These formulas show that Cartesian products provide a method for building infinite families of transmission regular digraphs with few distinct distance eigenvalues; this is discussed in Section \ref{sSRD}.  
The formulas (and the idea of constructing digraphs with few distance eigenvalues) parallel similar results for graphs.  However, the proofs of the eigenvalue formulas are quite different.  
 
 Formulas analogous to the ones we derive for digraphs are known for graphs.  
 In the case of graphs, each of the matrices involved is real and symmetric,  so its eigenvalues are real and there is a basis of eigenvectors.  Furthermore, the distance matrix of a transmission regular graph $G$ of order $n$ commutes with $\J_n$, allowing simultaneous diagonalization of $\D(G)$ and $\J_n$.  Unfortunately, the eigenvalues of distance or adjacency matrices of digraphs  may be non-real  and there may not be a basis of eigenvectors.  Examples include the directed cycle $\vec C_n$, which has eigenvalues $1,\omega,\dots,\omega^{n-1}$ where $\omega=e^{(2\pi i)/n}$. A transmission regular digraph of diameter two that lacks a basis of eigenvectors is exhibited in the next example.
 
\begin{figure}[h!]
\begin{center}
\scalebox{.8}{\begin{tikzpicture}
\definecolor{cv0}{rgb}{0.0,0.0,0.0}
\definecolor{cfv0}{rgb}{1.0,1.0,1.0}
\definecolor{clv0}{rgb}{0.0,0.0,0.0}
\definecolor{cv1}{rgb}{0.0,0.0,0.0}
\definecolor{cfv1}{rgb}{1.0,1.0,1.0}
\definecolor{clv1}{rgb}{0.0,0.0,0.0}
\definecolor{cv2}{rgb}{0.0,0.0,0.0}
\definecolor{cfv2}{rgb}{1.0,1.0,1.0}
\definecolor{clv2}{rgb}{0.0,0.0,0.0}
\definecolor{cv3}{rgb}{0.0,0.0,0.0}
\definecolor{cfv3}{rgb}{1.0,1.0,1.0}
\definecolor{clv3}{rgb}{0.0,0.0,0.0}
\definecolor{cv0v1}{rgb}{0.0,0.0,0.0}
\definecolor{cv0v3}{rgb}{0.0,0.0,0.0}
\definecolor{cv1v0}{rgb}{0.0,0.0,0.0}
\definecolor{cv1v2}{rgb}{0.0,0.0,0.0}
\definecolor{cv2v0}{rgb}{0.0,0.0,0.0}
\definecolor{cv2v1}{rgb}{0.0,0.0,0.0}
\definecolor{cv3v0}{rgb}{0.0,0.0,0.0}
\definecolor{cv3v2}{rgb}{0.0,0.0,0.0}
\Vertex[style={minimum
size=1.0cm,draw=cv0,fill=cfv0,text=clv0,shape=circle},LabelOut=false,L=\hbox{$1$},x=2.1397cm,y=0.0cm]{v0}
\Vertex[style={minimum
size=1.0cm,draw=cv1,fill=cfv1,text=clv1,shape=circle},LabelOut=false,L=\hbox{$2$},x=5.0cm,y=1.208cm]{v1}
\Vertex[style={minimum
size=1.0cm,draw=cv2,fill=cfv2,text=clv2,shape=circle},LabelOut=false,L=\hbox{$3$},x=2.7507cm,y=5.0cm]{v2}
\Vertex[style={minimum
size=1.0cm,draw=cv3,fill=cfv3,text=clv3,shape=circle},LabelOut=false,L=\hbox{$4$},x=0.0cm,y=3.7133cm]{v3}
\Edge[lw=0.1cm,style={color=cv0v1,},](v0)(v1)
\Edge[lw=0.1cm,style={color=cv0v1,},](v0)(v3)
\Edge[lw=0.1cm,style={color=cv0v1,},](v1)(v2)
\Edge[lw=0.1cm,style={post,color=cv2v0,},](v2)(v0)
\Edge[lw=0.1cm,style={post,,color=cv3v2,},](v3)(v2)
\end{tikzpicture}}
\caption{\label{fig:TRegNoEvec} A transmission regular digraph with diameter two and no basis of eigenvectors. Here and elsewhere, a bold line indicates both arcs are present. }\vspace{-5pt}
\end{center}
\end{figure}
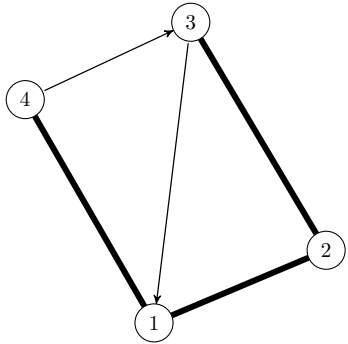
 \begin{example} {\rm Let $\dig$ be the digraph shown in Figure \ref{fig:TRegNoEvec}.  Then  $\D(\dig)=\mtx{0 & 1 & 2 & 1 \\
1 & 0 & 1 & 2 \\
1 & 1 & 0 & 2 \\
1 & 2 & 1 & 0}$, $\dspec(\dig)=\{4,  -1, -1, -2\}$, and every eigenvector for $-1$ is a multiple of $[4, -1, -1, -1]^T$.
 }\end{example}




If the matrices $M$ and $M'$ are real and symmetric, then formulas for the spectra of $M\cpi M'$ and $M\cpj M'$ in terms of those of $M$ and $M'$ are well known. The formula for $\spec(M\cpi M')$ is also known without any other assumptions.

\begin{remark}{\rm   Let $M\in\Cnn$ with $\spec(M)=\{\lam_1,\dots,\lam_n\}$ and  $M'\in\C^{n'\times n'}$ with $\spec(M')=\{\lam'_1,\dots,\lam'_{n'}\}$.
Then
$\spec(M\cpi M')=\{\lam_i+\lam_j' : i=1,\dots,n, \ j=1,\dots,n'\}$ \cite[Theorem 4.4.5]{HJ2}.
This implies the (known) formula for   the adjacency spectra of cartesian products of any digraphs: 
   Let $\dig$ and $\dig'$ be digraphs of orders $n$ and $n'$, respectively, with 
$\spec_{\A}(\dig)=\{\alpha_1,\alpha_2,\dots,\alpha_n\}$ and $\spec_{\A}(\dig')=\{\alpha_1',\alpha'_2,\dots,\alpha'_{n'}\}$ \cite[Theorem 3]{EH80}.  
Then
$
\spec_{\A}(\dig{\cp}\dig')=\left\{\alpha_i+\alpha_j' : i=1,\dots,n, \  j=1,\dots,n' \right\}.
$
}\end{remark}

As the next example shows, the geometric multiplicity of the eigenvalues of $M\cpi M'$ is not entirely determined from the eigenvalues of $M$ and $M'$ and  their geometric multiplicities.

\begin{example}
\label{ex:geometric_mult_cartesian_prod_counterexample}
{\rm Let 
\[
M=
\begin{bmatrix}
0& 0 & 0\\
0 & 0 & 1\\
0 & 0 & 0
\end{bmatrix},
\hspace{.4cm}
M'_1=\begin{bmatrix}
0 & 1 & 0 & 0\\
0 & 0 & 0 & 0\\
0 & 0 & 0 & 1\\
0 & 0 & 0 & 0
\end{bmatrix},
\hspace{.4cm}
M'_2=\begin{bmatrix}
0 & 1 & 0 & 0\\
0 & 0 & 1 & 0\\
0 & 0 & 0 & 0\\
0 & 0 & 0 & 0
\end{bmatrix}.
\]
We observe that the eigenvalue $0$ has geometric multiplicity $2$ (and algebraic multiplicity 4) for both $M'_1$ and $M'_2$. Nevertheless, one can check that $\rank(M\cpi M'_1)=6$ while $\rank(M\cpi M'_2)=7$, so that the geometric multiplicity of $0$ for $M\cpi M'_1$ and for $M\cpi M'_2$ differs.
}\end{example}

 The formula for $\spec(M\cpi M')$ can be proved by using the Jordan canonical form (as we do in other theorems) 
The geometric multiplicity of the eigenvalues of $M\cpi M'$ is fully determined from the Jordan canonical forms of $M$ and $M'$; these, in turn, are fully determined from their  Weyr characteristics (see for example \cite[\S $3.1$]{HJ1}). We conclude that, in addition to the geometric multiplicities, other elements of the Weyr characteristics of $M$ and $M'$ determine the geometric multiplicity of the eigenvalues of $M\cpi M'$.


Next we turn our attention to $M\cpj M'$.  


 
\begin{proposition}\label{prop:MJ-JCF}
Suppose $M\in\Rnn$ is a nonnegative matrix that satisfies $M\bone_n=\rho\bone_n$.  Then  $\rho$ is the spectral radius of $M$ and  there exists an invertible matrix $C\in\Cnn$ such that 
\[C^{-1}\J_n C=\mtx{n & \boldzero^T \\ \boldzero & O} \mbox{ and } C^{-1}MC=\mtx{\rho & \bx^T \\ \boldzero & R}\]
for some   Jordan matrix $R$ and  $\bx\in\R^{n-1}$.  If in addition $M$ is irreducible, then  $\JCF_M=\mtx{\rho & \boldzero^T \\ \boldzero & R}$.
\end{proposition}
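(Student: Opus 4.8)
The plan is to dispose of the three assertions in turn, letting Perron--Frobenius theory and Lemma~\ref{lem_sylvester_1645_10nov} carry the weight. The spectral-radius claim is immediate: since $M$ is entrywise nonnegative, each row sum $(M\bone_n)_i=\sum_j m_{ij}$ is nonnegative, and the hypothesis $M\bone_n=\rho\bone_n$ says every row sum equals $\rho$, so $\rho\ge 0$ and $\rho$ is a (real) eigenvalue of $M$, giving $\rho\le\rho(M)$. The reverse inequality is the standard bound that the spectral radius of a nonnegative matrix does not exceed its maximum row sum, which here is $\rho$; hence $\rho(M)=\rho$.

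For the simultaneous reduction I would first note that the requirement $C^{-1}\J_n C=\mtx{n & \boldzero^T \\ \boldzero & O}$ forces the first column of $C$ to span the $n$-eigenspace of $\J_n=\bone_n\bone_n^T$, namely $\langle\bone_n\rangle$, and the remaining columns to span its kernel $W:=\{\bv:\bone_n^T\bv=0\}$. So I would take the first column to be $\bone_n$ and the others to be a basis of $W$; since $\C^n=\langle\bone_n\rangle\oplus W$ this $C$ is invertible and diagonalizes $\J_n$ as required. Because $\langle\bone_n\rangle$ is $M$-invariant, $C^{-1}MC$ is automatically block upper triangular with $\rho$ in the $(1,1)$ entry and $\boldzero$ beneath it; its top-right block $\bx^T$ records the $\bone_n$-components of the images of the $W$-basis vectors, while its bottom-right block is the matrix of the operator $M$ induces on the quotient $\C^n/\langle\bone_n\rangle\cong W$. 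Any basis of the quotient lifts uniquely into $W$, so choosing the $W$-basis to realize this induced operator in Jordan form produces the desired Jordan matrix $R$. Taking the $W$-basis real makes both $\bx$ and the induced-operator matrix real; the conjugations left available after fixing the $\J_n$-block are exactly those by $\diag(1,Q)$ with $Q$ invertible, and I would use these both to bring the induced operator to Jordan form and, by adjusting the scales and phases of the Jordan chains through a $Q$ in the centralizer of $R$, to restore $\bx\in\R^{n-1}$.

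The step I expect to be the main obstacle is precisely this reality of $\bx$: passing from the real block-triangular form to the (complex) Jordan matrix $R$ generically requires a complex change of basis, which renders $\bx$ complex, so the argument must show that the residual centralizer freedom is enough to make $\bx$ real while keeping $R$ in Jordan form. For simple eigenvalues this is merely a rescaling of each eigenvector by a suitable phase; for a Jordan block of size $k$ it amounts to solving a triangular system for the coefficients of the Toeplitz (upper-triangular) centralizer element, and the potential degeneracies there (e.g.\ a vanishing leading component of $\bx$ on a block) are the points that need care.

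Finally, the irreducible refinement is a clean corollary. By the Perron--Frobenius theorem for irreducible nonnegative matrices, $\rho=\rho(M)$ is an algebraically simple eigenvalue, so it does not occur among the eigenvalues of $R$; thus $\spec([\rho])\cap\spec(R)=\emptyset$ and Lemma~\ref{lem_sylvester_1645_10nov} applies to $\bigl[\begin{smallmatrix}\rho & \bx^T\\ \boldzero & R\end{smallmatrix}\bigr]$, showing it is similar to $\bigl[\begin{smallmatrix}\rho & \boldzero^T\\ \boldzero & R\end{smallmatrix}\bigr]$. The latter is the direct sum of the $1\times 1$ Jordan block $[\rho]$ with the Jordan matrix $R$, and being similar to $M$ it is the Jordan canonical form $\JCF_M$.
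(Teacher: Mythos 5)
Your proposal is correct and follows essentially the same route as the paper: choose $C_1$ with first column $\bone_n$ and remaining columns a real basis of the kernel of $\J_n$, conjugate the resulting lower-right block $B$ to Jordan form via $C=C_1([1]\oplus C_2)$, and in the irreducible case use Perron--Frobenius simplicity of $\rho$ together with Lemma~\ref{lem_sylvester_1645_10nov} to remove the coupling row. The one place you go beyond the paper is the reality of $\bx$: the paper's proof just takes $\bx^T=\by^T C_2$, which is complex in general, and never addresses the claim $\bx\in\R^{n-1}$, whereas your residual-centralizer argument genuinely closes this gap --- and the degeneracy you flag is harmless, since on each Jordan block the entries of $\bx$ preceding the first nonzero one are already real (zero), and the triangular Toeplitz recursion is then solvable because each new unknown enters with that nonzero entry as its coefficient.
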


\bpf Since $ M$ is a nonnegative matrix that satisfies $M\bone_n=\rho\bone_n$, its spectral radius is $\rho$.  Choose a basis of (real) eigenvectors $\bc_1=\bone, \bc_2,\dots,\bc_n$ for $\J_n$ and define $C_1=\mtx{\bc_1 & \bc_2 &\dots &\bc_n}$.  Then 
$C_1^{-1}\J_n C_1=\mtx{n & \boldzero^T \\ \boldzero & O}$ and $ C_1^{-1}MC_1=\mtx{\rho & \by^T \\ \boldzero & B}$.  Choose $C_2\in\C^{(n-1)\x(n-1)}$ such that $C_2^{-1}BC_2=\JCF_B$.  Then 
$C^{-1}MC=\mtx{\rho & \bx^T \\ \boldzero & R}$ with $C=C_1([1]\oplus C_2)$ and $R=\JCF_B$.
If $ M$ is  irreducible, then  $\rho$ is a simple eigenvalue and $\JCF_M$ has the required form. \epf

Observe that any Jordan matrix $R$ can be expressed as $R=D+N$ where $D$ is a diagonal matrix and $N$ is nilpotent. Then for any $c\in\R$, $\JCF_{cR}=cD+N$.

\begin{theorem}\label{thm:M-JCF-CartProd}
Suppose $M\in\Rnn$, $M'\in\R^{n'\times n'}$ are irreducible nonnegative matrices that satisfy $M\bone_n=\rho\bone_n$ and  $M'\bone_{n'}=\rho'\bone_{n'}$. Let $\JCF_M=\begin{bmatrix}
\rho & \boldzero^T \\ 
\boldzero & D+N
\end{bmatrix}$ and $\JCF_{M'}=\begin{bmatrix}
\rho' & \boldzero^T \\ 
\boldzero & D'+N'
\end{bmatrix}$, where $D$ and $D'$ are diagonal and $N$ and $N'$ are nilpotent. Then
\[\JCF_{M\cpj M'}=
\begin{bmatrix}
n\rho'+n'\rho & \boldzero^T & \boldzero^T & \boldzero^T \\ 
\boldzero & nD'+N' & O & O \\ 
\boldzero & O & n'D+N & O \\ 
\boldzero & O & O & O
\end{bmatrix} .
\]

\end{theorem}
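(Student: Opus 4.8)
The plan is to simultaneously diagonalize the two copies of the all-ones matrix while triangularizing $M$ and $M'$, and then to conjugate $M\cpj M'$ by the Kronecker product of the two transforming matrices, exploiting the mixed-product rule $(A\otimes B)(C\otimes D)=(AC)\otimes(BD)$. Concretely, I would apply Proposition \ref{prop:MJ-JCF} to obtain invertible $C\in\Cnn$ and $C'\in\C^{n'\times n'}$ with $C^{-1}\J_nC=[n]\oplus O$ and $C^{-1}MC=\left[\begin{smallmatrix}\rho&\bx^T\\\boldzero&R\end{smallmatrix}\right]$, where $R=D+N$ is the non-Perron Jordan part (irreducibility guarantees $\rho$ is simple, so $R$ is exactly the Jordan form of $M$ off its Perron root), and likewise $(C')^{-1}\J_{n'}C'=[n']\oplus O$ and $(C')^{-1}M'C'=\left[\begin{smallmatrix}\rho'&(\bx')^T\\\boldzero&R'\end{smallmatrix}\right]$ with $R'=D'+N'$. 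Setting $P=C\otimes C'$, the mixed-product rule gives
\[P^{-1}(M\cpj M')P=\big(C^{-1}MC\big)\otimes\big((C')^{-1}\J_{n'}C'\big)+\big(C^{-1}\J_nC\big)\otimes\big((C')^{-1}M'C'\big)=:S,\]
so it suffices to compute $\JCF_S$, since $S$ is similar to $M\cpj M'$.

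Next I would read off the block structure of $S$. Partitioning each index set into its Perron coordinate and the remaining coordinates splits $S$ into diagonal blocks of sizes $1$, $n'-1$, $n-1$, and $(n-1)(n'-1)$. Because $C^{-1}\J_nC$ and $(C')^{-1}\J_{n'}C'$ are supported only on their Perron entries, each of the two Kronecker summands contributes to only a handful of blocks, and a direct check shows that $S$ is block upper triangular with diagonal blocks $[\,n\rho'+n'\rho\,]$, $nR'$, $n'R$, and $O$, with all off-diagonal blocks confined to the first block row (the two entries $n(\bx')^T$ and $n'\bx^T$). In particular the trailing $(n-1)(n'-1)$ block is identically zero, yielding the zero block of the asserted Jordan form. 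By the observation recorded just before the theorem, $\JCF_{nR'}=nD'+N'$ and $\JCF_{n'R}=n'D+N$, so the two middle diagonal blocks already match the target; note that the scaling multiplies only the diagonals $D',D$ and leaves the nilpotent superdiagonals $N',N$ intact.

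The remaining---and main---step is to detach the first block row, for which I would invoke Lemma \ref{lem_sylvester_1645_10nov} with $A=[\,n\rho'+n'\rho\,]$ and $B=nR'\oplus n'R\oplus O$. Since the off-diagonal blocks all lie in the first row, $S$ has exactly the form $\left[\begin{smallmatrix}A&C\\O&B\end{smallmatrix}\right]$, and the lemma yields $S\sim A\oplus B$ provided $\spec(A)\cap\spec(B)=\emptyset$. This spectral-separation hypothesis is where irreducibility is essential and is the crux of the argument: by Perron--Frobenius, $\rho$ and $\rho'$ are the simple spectral radii of $M$ and $M'$, so every eigenvalue $\lambda$ of $R$ satisfies $\operatorname{Re}\lambda<\rho$ (equality would force $\lambda=\rho$, excluded by simplicity), and similarly $\operatorname{Re}\lambda'<\rho'$ for every eigenvalue of $R'$. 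Hence every eigenvalue of $B$ has real part strictly below $n\rho'+n'\rho$, and $0$ is strictly smaller as well, giving the required disjointness. Applying the lemma and then taking Jordan forms blockwise produces $\JCF_S=[\,n\rho'+n'\rho\,]\oplus(nD'+N')\oplus(n'D+N)\oplus O$, which is precisely the claimed form for $\JCF_{M\cpj M'}$.
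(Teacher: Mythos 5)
Your proposal is correct and takes essentially the same route as the paper's proof: applying Proposition \ref{prop:MJ-JCF} to get $C$ and $C'$, conjugating $M\cpj M'$ by $C\otimes C'$ via the mixed-product rule, regrouping the indices (a permutation similarity) to reach the block upper triangular form with diagonal blocks $[\,n\rho'+n'\rho\,]$, $nR'$, $n'R$, $O$ and off-diagonal data only in the first block row, then detaching that row with Lemma \ref{lem_sylvester_1645_10nov} and finishing with the observation $\JCF_{cR}=cD+N$. Your only departure is that you spell out the spectral disjointness (the real-part bound from Perron--Frobenius, including the $0$ block) that the paper simply asserts, which is a welcome bit of extra care rather than a different argument.
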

\bpf  Let $R=D+N$ and $R'=D'+N'$.  Use Proposition \ref{prop:MJ-JCF} to choose $C$ and $C'$ such that $C^{-1}\J_n C=\mtx{n & \boldzero^T \\ \boldzero & O}=\diag(n,0,\dots,0)$, $ C^{-1}MC=\mtx{\rho & \bx^T \\ \boldzero & R}$,  $C'^{-1}\J_{n'} C'=\mtx{n' & \boldzero^T \\ \boldzero & O}=\diag(n',0,\dots,0)$, and $ C'^{-1}M'C'=\mtx{\rho' & \bx'^T \\ \boldzero & R'}$. 
Then \\
$(C^{-1}\otimes C'^{-1})(M\cpj M')(C\otimes C')=$\\
$\mtx{\rho & \bx^T \\ \boldzero & R}\otimes \diag(n',0,\dots,0)+\diag(n,0,\dots,0)\otimes \mtx{\rho' & \bx'^T \\ \boldzero & R'}=$ \\
\renewcommand{\arraystretch}{1.3}
\[{\scriptsize \lb \begin{array}{cc|cc|cc|c|cc} 
\rho n' & \boldzero^T & x_1n' & \boldzero^T& x_2n' & \boldzero^T & \cdots & x_{n-1}n' &\boldzero^T \\ 
\boldzero & O & \boldzero & O & \boldzero & O& \cdots & \boldzero & O    \\
\hline
 0 & \boldzero^T& r_{11}n' & \boldzero^T & r_{12}n' & \boldzero^T &  \cdots & 0 &\boldzero^T \\ 
\boldzero & O & \boldzero & O & \boldzero & O & \cdots & \boldzero & O  \\ 
\hline
 0 & \boldzero^T  & 0 & \boldzero^T & r_{22}n' & \boldzero^T&  \cdots & 0 &\boldzero^T \\ 
\boldzero & O & \boldzero & O & \boldzero & O & \cdots & \boldzero & O  \\ 
\hline
\vdots & \vdots &\vdots &\vdots &\vdots &\vdots & \ddots &\vdots &\vdots\\
\hline
%
%
 0 & \boldzero^T  & 0 & \boldzero^T & 0 & \boldzero^T&  \cdots & r_{nn}n' &\boldzero^T \\ 
\boldzero & O & \boldzero & O & \boldzero & O & \cdots & \boldzero & O  \\ 
\end{array}\rb +  
\lb \begin{array}{cc|cc|cc|c|cc} 
n \rho' & n \bx'^T & 0 & \boldzero^T& 0 & \boldzero^T & \cdots & 0 &\boldzero^T \\ 
\boldzero & n R' & \boldzero & O & \boldzero & O& \cdots & \boldzero & O    \\
\hline
 0 & \boldzero^T& 0 & \boldzero^T& 0 & \boldzero^T &  \cdots & 0 &\boldzero^T \\ 
\boldzero & O & \boldzero & O & \boldzero & O & \cdots & \boldzero & O  \\ 
\hline
 0 & \boldzero^T  & 0 & \boldzero^T & 0 &\boldzero^T &  \cdots & 0 &\boldzero^T \\ 
\boldzero & O & \boldzero & O & \boldzero & O & \cdots & \boldzero & O  \\ 
\hline
\vdots & \vdots &\vdots &\vdots &\vdots &\vdots & \ddots &\vdots &\vdots\\
\hline
%
%
 0 & \boldzero^T  & 0 & \boldzero^T & 0 & \boldzero^T&  \cdots & 0 &\boldzero^T \\ 
\boldzero & O & \boldzero & O & \boldzero & O & \cdots & \boldzero & O  
\end{array}\rb=
}\]
\beq\label{eq:CP1} \lb \begin{array}{cc|cc|cc|c|cc} 
\rho n' +n\rho' & n\bx'^T &  x_1n' & \boldzero^T& x_2n' & \boldzero^T & \cdots & x_{n-1}n' &\boldzero^T  \\ 
\boldzero & nR' & \boldzero & O & \boldzero & O& \cdots & \boldzero & O    \\
\hline
 0 & \boldzero^T& r_{11}n' & \boldzero^T & r_{12}n' & \boldzero^T &  \cdots & 0 &\boldzero^T \\ 
\boldzero & O & \boldzero & O & \boldzero & O & \cdots & \boldzero & O  \\ 
\hline
 0 & \boldzero^T  & 0 & \boldzero^T & r_{22}n' & \boldzero^T&  \cdots & 0 &\boldzero^T \\ 
\boldzero & O & \boldzero & O & \boldzero & O & \cdots & \boldzero & O  \\ 
\hline
\vdots & \vdots &\vdots &\vdots &\vdots &\vdots & \ddots &\vdots &\vdots\\
\hline
%
%
 0 & \boldzero^T  & 0 & \boldzero^T & 0 & \boldzero^T&  \cdots & r_{nn}n' &\boldzero^T \\ 
\boldzero & O & \boldzero & O & \boldzero & O & \cdots & \boldzero & O  \\ 
\end{array}\rb\!\!.
\eeq
\renewcommand{\arraystretch}{1}
The matrix in \eqref{eq:CP1} is permutation similar to
\beq\label{eq:CP2} \lb \begin{array}{cccccc} 
\rho n' +n\rho' & n\bx'^T &  n'\bx^T & \boldzero^T&  \cdots & \boldzero^T  \\ 
\boldzero & nR' &  O  & O& \cdots &  O    \\
 \boldzero& O & n'R & O & \cdots & O \\ 
\boldzero & O  & O &  O & \cdots &  O  \\ 
\vdots & \vdots &\vdots &\vdots  & \ddots &\vdots\\
\boldzero & O  & O &  O & \cdots &  O 
\end{array}\rb\!\!.
\eeq
Since $\rho n' +n\rho'$ is not an eigenvalue  of  $nR'$ or $n'R$, Lemma \ref{lem_sylvester_1645_10nov} implies that  the Jordan canonical form of the matrix in \eqref{eq:CP2} is \[\begin{bmatrix}
n\rho'+n'\rho & \boldzero^T & \boldzero^T & \boldzero^T \\ 
\boldzero & \JCF_{nR'} & O & O \\ 
\boldzero & O & \JCF_{n'R} & O \\ 
\boldzero & O & O & O
\end{bmatrix}=\begin{bmatrix}
n\rho'+n'\rho & \boldzero^T & \boldzero^T & \boldzero^T \\ 
\boldzero & nD'+N' & O & O \\ 
\boldzero & O & n'D+N & O \\ 
\boldzero & O & O & O
\end{bmatrix}\!\!.\]
\epf

\begin{corollary}\label{cor:spectrum_cart_prod_general_matrices} Suppose $M\in\Rnn$, $M'\in\R^{n'\times n'}$ are irreducible nonnegative matrices that satisfy $M\bone_n=\rho\bone_n$ and $M'\bone_{n'}=\rho'\bone_{n'}$. Let $\spec(M)=\{\rho,\lambda_2,\dots,\lambda_n\}$ and $\spec(M')=\{\rho',\lambda'_2,\dots,\lambda'_{n'}\}$. Then
\[
\spec(M\cpj M')=\{n\rho'+n'\rho,n'\lambda_2,\dots,n'\lambda_n,n\lambda'_2,\dots,n\lambda'_{n'},0^{(n-1)(n'-1)}\}.
\]
\end{corollary}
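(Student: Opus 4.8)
The plan is to read the claimed spectrum directly off the Jordan canonical form supplied by Theorem \ref{thm:M-JCF-CartProd}. Since the spectrum of a matrix is the multiset of diagonal entries of its Jordan canonical form, each eigenvalue appearing with its algebraic multiplicity, it suffices to collect the diagonal entries of $\JCF_{M\cpj M'}$.

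First I would record the diagonal entries of $\JCF_M$ and $\JCF_{M'}$. Because $M$ is irreducible and nonnegative with $M\bone_n=\rho\bone_n$, Proposition \ref{prop:MJ-JCF} gives that $\rho$ is a simple eigenvalue and that $\JCF_M=\begin{bmatrix}\rho & \boldzero^T \\ \boldzero & D+N\end{bmatrix}$ with $D$ diagonal and $N$ strictly upper triangular nilpotent. Hence the diagonal of $D$ lists precisely $\lambda_2,\dots,\lambda_n$, the eigenvalues of $M$ other than $\rho$, with multiplicity; likewise the diagonal of $D'$ lists $\lambda'_2,\dots,\lambda'_{n'}$.

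Then, applying Theorem \ref{thm:M-JCF-CartProd}, the diagonal of $\JCF_{M\cpj M'}$ splits into four blocks: the single entry $n\rho'+n'\rho$; the diagonal of $nD'+N'$, which (since $N'$ contributes nothing to the diagonal) equals $n\lambda'_2,\dots,n\lambda'_{n'}$; the diagonal of $n'D+N$, which equals $n'\lambda_2,\dots,n'\lambda_n$; and the diagonal of the zero block $O$, contributing the eigenvalue $0$ with multiplicity equal to its order. Assembling these yields exactly the asserted spectrum.

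There is essentially no obstacle: the corollary is an immediate consequence of the theorem, and the only point worth flagging is the bookkeeping of sizes. One should verify that the zero block has order $nn'-1-(n'-1)-(n-1)=(n-1)(n'-1)$, which accounts for the multiplicity $0^{(n-1)(n'-1)}$, and invoke the observation recorded just before the theorem that scaling a Jordan matrix $R=D+N$ by a scalar $c$ yields diagonal part $cD$ while leaving the nilpotent part $N$ unchanged, so that the diagonal entries of $nR'$ and $n'R$ are precisely $n$ and $n'$ times the respective eigenvalues of $M'$ and $M$.
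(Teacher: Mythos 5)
Your proposal is correct and matches the paper's intent exactly: the paper states this corollary with no separate proof precisely because it is immediate from Theorem \ref{thm:M-JCF-CartProd}, and your argument---reading the spectrum off the diagonal of the Jordan form, using the observation that $\JCF_{cR}=cD+N$, and checking that the zero block has order $nn'-1-(n-1)-(n'-1)=(n-1)(n'-1)$---is exactly the intended derivation. No gaps; the size bookkeeping you flag is the only point requiring care, and you handle it correctly.
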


Considering $M$ and $M'$ in Corollary \ref{cor:spectrum_cart_prod_general_matrices} to be the distance matrices of two transmission regular digraphs we immediately obtain the next result.

\begin{theorem}\label{thm:TRcartprod-dig_new_new} Let $\dig$ and $\dig'$ be transmission regular digraphs of orders $n$ and $n'$ with transmissions $t$ and $t'$, and let $\spec_{\D}(\dig)=(t,\dev_2,\dots,\dev_n)$, $\spec_{\D}(\dig')=(t',\dev'_2,\dots,\dev'_{n'})$.  
Then
\[\spec_{\D}(\dig{\cp}\dig')=\{nt'+n't, n'\dev_2,\dots,n'\dev_n,n\dev'_2,\dots,n\dev'_{n'},0^{(n-1)(n'-1)}\}.\]
\end{theorem}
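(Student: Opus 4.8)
The plan is to recognize this as an immediate specialization of Corollary \ref{cor:spectrum_cart_prod_general_matrices}, taking $M=\D(\dig)$ and $M'=\D(\dig')$. The only real work is to confirm that these distance matrices satisfy the three hypotheses of the corollary, after which the formula follows by direct substitution together with the identity relating $\D(\dig\cp\dig')$ to the $\cpj$ construction.

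First I would verify the hypotheses with $\rho=t$ and $\rho'=t'$. Nonnegativity is immediate, since every entry of a distance matrix is a nonnegative integer. The condition $\D(\dig)\bone_n=t\bone_n$ is exactly the statement that $\dig$ is $t$-transmission regular: the $i$-th row sum of $\D(\dig)$ is precisely $\trans(v_i)$, and transmission regularity makes this constant equal to $t$; the analogous identity holds for $\dig'$. For irreducibility I would use that a distance matrix is only defined when the digraph is strongly connected, so every pair of distinct vertices lies at finite, hence positive, distance. Thus all off-diagonal entries of $\D(\dig)$ are strictly positive, which forces $\D(\dig)$ to be irreducible, and likewise for $\D(\dig')$. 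With these points checked, the listed spectra $\spec_{\D}(\dig)=(t,\dev_2,\dots,\dev_n)$ and $\spec_{\D}(\dig')=(t',\dev'_2,\dots,\dev'_{n'})$ are precisely of the form required in the corollary, with $t$ and $t'$ playing the role of the Perron roots $\rho$ and $\rho'$.

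Finally I would invoke the identity $\D(\dig\cp\dig')=\D(\dig)\cpj\D(\dig')$ recorded in Section \ref{s:mtx-prod}, so that $\spec_{\D}(\dig\cp\dig')=\spec(\D(\dig)\cpj\D(\dig'))$, and read off the conclusion directly from Corollary \ref{cor:spectrum_cart_prod_general_matrices}. There is no genuine obstacle here: all of the substantive spectral analysis was already carried out in Theorem \ref{thm:M-JCF-CartProd} and its corollary, where the nontrivial ingredients (the Sylvester-equation block-decoupling of Lemma \ref{lem_sylvester_1645_10nov} and the Perron reduction of Proposition \ref{prop:MJ-JCF}) reside. The only point deserving even a line of comment is the irreducibility of the distance matrices, which is forced automatically by strong connectivity.
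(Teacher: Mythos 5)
Your proposal is correct and is essentially the paper's own proof: the paper derives Theorem \ref{thm:TRcartprod-dig_new_new} in one line by taking $M$ and $M'$ in Corollary \ref{cor:spectrum_cart_prod_general_matrices} to be the distance matrices, exactly as you do. Your verification of the hypotheses (row sums equal to the transmission, and irreducibility forced by the strictly positive off-diagonal entries of a distance matrix of a strongly connected digraph) is sound and simply makes explicit what the paper leaves implicit.
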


The formula for the distance spectrum of a Cartesian product of graphs (analogous to that in Theorem \ref{thm:TRcartprod-dig_new_new}) was originally proved by Indulal for distance regular graphs \cite[Theorem 2.1]{I2009}, and  it was noted in \cite{AP15} that the proof applies to transmission regular graphs. The proof used the facts that the distance matrix of a transmission regular graph commutes with $\J$ and every real symmetric matrix has a basis of eigenvectors.


Having found the spectrum of $\D(\dig\cp\dig')$, we now focus on describing its eigenvectors.

\begin{theorem}
\label{prop:evectors_cart_prod_general_matrices}
Let $M\in\mathbb{R}^{n\times n}$ and $M'\in\mathbb{R}^{n'\times n'}$ be irreducible nonnegative matrices, and suppose that $M\bone_n=\rho\bone_n$, $M'\bone_{n'}=\rho'\bone_{n'}$ for some $\rho,\rho'\geq 0$. Let $\{\bv_2, \dots, \bv_k\}$ be a linearly independent set of eigenvectors of $M$ with $M \bv_i = \lambda_i \bv_i, \ \lambda_i \in \spec(M)$, and let $\{\bv_2', \dots, \bv_{k'}'\}$ be a linearly independent set of eigenvectors of $M'$ with $M' \bv_{j}' = \lambda_j' \bv_j', \ \lambda_j' \in \spec(M')$. Then 
\ben[$(1)$]
\item \label{cartevec_1}  $\bone_n\otimes\bone_{n'}$ is an eigenvector of ${M}{\cpj}{M}'$ corresponding to the spectral radius,  $n\rho'+n'\rho$.
\item \label{cartevec_2} For $i=2, \dots, k$, \ 
$\bv_i \otimes \bone_{n'} + \gamma_i\bone_n \otimes \bone_{n'}$, where $\gamma_i =\frac{\bv_i^T\bone_n \rho'}{n'\lambda_i-n'\rho - n\rho'}$, is an eigenvector of ${M}{\cpj}{M}'$ corresponding to the eigenvalue  $n'\lambda_i$.
\item \label{cartevec_3} For $j=2, \dots, k'$, \
$\bone_n \otimes \bv_j' +\gamma'_j\bone_n \otimes \bone_{n'}$, where $\gamma_j' =\frac{\bv_j'^T\bone_{n'} \rho}{n\lambda'_j-n\rho' - n'\rho}$, is an eigenvector of $M\cpj M'$ corresponding to  the eigenvalue  $n\lambda'_j$.
\item \label{cartnullvec} Let $\{\bz_1, \dots, \bz_{n-1}\}$, respectively $\{\bz'_1, \dots, \bz'_{n'-1}\}$, be a linearly independent set of null vectors of $\J_n$, respectively $\J_{n'}$. Then, for $i=1, \dots, n-1, \ j=1, \dots, n'-1$,  $\bz_i \otimes \bz_{j}'$ is a null vector of ${M}{\cpj}{M}'$.
\een
Furthermore, the set of eigenvectors of ${M}{\cpj}{M}'$ described in \eqref{cartevec_1}--\eqref{cartnullvec} is linearly independent.  If  $\{\bv_1=\bone_n,\bv_2, \dots,$ $ \bv_n\}$ and $\{\bv_1'=\bone_{n'},\bv_2', \dots, \bv_{n'}'\}$ are bases of eigenvectors for $M$ and $M'$, then  the set of eigenvectors of ${M}{\cpj}{M}'$ described in \eqref{cartevec_1}--\eqref{cartnullvec} is a basis of eigenvectors.
\end{theorem}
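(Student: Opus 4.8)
The plan is to verify the four displayed families directly and then prove their linear independence via a direct-sum decomposition of $\R^{nn'}$. The only tools needed are the mixed-product rule $(A\otimes B)(\bu\otimes\bw)=(A\bu)\otimes(B\bw)$ together with the identity $\J_n=\bone_n\bone_n^T$, from which $\J_n\bu=(\bone_n^T\bu)\bone_n$, $\J_n\bone_n=n\bone_n$, and $M\bone_n=\rho\bone_n$ (and likewise with primes).

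Applying $M\cpj M'=M\otimes\J_{n'}+\J_n\otimes M'$ to $\bone_n\otimes\bone_{n'}$ gives $(n'\rho+n\rho')\,\bone_n\otimes\bone_{n'}$, which is \eqref{cartevec_1}. For \eqref{cartevec_2}, the operator sends $\bv_i\otimes\bone_{n'}$ to $n'\lambda_i(\bv_i\otimes\bone_{n'})+\rho'(\bv_i^T\bone_n)(\bone_n\otimes\bone_{n'})$; adding the multiple $\gamma_i\bone_n\otimes\bone_{n'}$ of the eigenvector from \eqref{cartevec_1} and demanding that the image be $n'\lambda_i$ times the whole vector yields the single scalar equation $\gamma_i(n'\lambda_i-n'\rho-n\rho')=\rho'\,\bv_i^T\bone_n$, whose solution is the stated $\gamma_i$. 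Claim \eqref{cartevec_3} is identical after interchanging the two factors, and \eqref{cartnullvec} is immediate because $\J_n\bz_i=\boldzero$ and $\J_{n'}\bz_j'=\boldzero$ annihilate both Kronecker terms. I would also check that the denominators are nonzero: $\rho$ is the spectral radius of $M$ and is simple by Proposition \ref{prop:MJ-JCF}, so for $\bv_i\notin\operatorname{span}\{\bone_n\}$ we have $\lambda_i\neq\rho$ and $|\lambda_i|\le\rho$; then $n'\lambda_i=n'\rho+n\rho'$ would force $\lambda_i=\rho+\tfrac{n}{n'}\rho'\ge\rho$, hence $\rho'=0$ and $\lambda_i=\rho$, a contradiction (and symmetrically for $\gamma_j'$).

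The substantive step is linear independence. Put $V_1=\operatorname{span}\{\bone_n\}$, $V_0=\ker\J_n$, so $\R^n=V_1\oplus V_0$, and define $V_1',V_0'\subseteq\R^{n'}$ analogously; this gives
\[
\R^{nn'}=(V_1\otimes V_1')\oplus(V_0\otimes V_1')\oplus(V_1\otimes V_0')\oplus(V_0\otimes V_0').
\]
I would set a linear combination of all the constructed vectors equal to $\boldzero$ and project onto the four summands. The vectors $\bz_i\otimes\bz_j'$ of \eqref{cartnullvec} lie in $V_0\otimes V_0'$ and, by Lemma \ref{lem:kron-basis}, are a basis of it, so their coefficients vanish first. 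Writing $\bv_i=c_i\bone_n+\bw_i$ with $\bw_i\in V_0$, the component of the combination in $V_0\otimes V_1'$ is $\big(\sum_i b_i\bw_i\big)\otimes\bone_{n'}$ --- the correction terms and all other families contribute nothing there --- so $\sum_i b_i\bw_i=\boldzero$. Since $\{\bone_n,\bv_2,\dots,\bv_k\}$ is linearly independent (the $\bv_i$ are eigenvectors for $\lambda_i\ne\rho$, hence independent from the Perron vector $\bone_n$), this forces every $b_i=0$; symmetrically every $c_j=0$ from the $V_1\otimes V_0'$ component, and then $a\,\bone_n\otimes\bone_{n'}=\boldzero$ gives $a=0$.

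For the final assertion, when $\{\bone_n,\bv_2,\dots,\bv_n\}$ and $\{\bone_{n'},\bv_2',\dots,\bv_{n'}'\}$ are full eigenbases the four families number $1+(n-1)+(n'-1)+(n-1)(n'-1)=nn'$ vectors, so the linear independence just established upgrades them to a basis. I expect the main obstacle to be the bookkeeping in the independence step: one must confirm that the corrected eigenvectors of \eqref{cartevec_2}--\eqref{cartevec_3} place their entire ``overlap'' in the summand $V_1\otimes V_1'$, leaving the projections onto $V_0\otimes V_1'$ and $V_1\otimes V_0'$ to see only the $\bw_i$ and $\bw_j'$ parts, and that the simplicity of the Perron root really does make $\{\bone_n,\bv_2,\dots,\bv_k\}$ independent.
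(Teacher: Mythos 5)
Your verifications of parts (1)--(4) are exactly the paper's computations (apply $M\otimes \J_{n'}+\J_n\otimes M'$ via the mixed-product rule and choose $\gamma$ to kill the $\bone_n\otimes\bone_{n'}$ residue), but your linear-independence argument is organized differently, and correctly so. The paper proceeds in two pieces: it notes that $(\bz_i\otimes\bz_j')^T(\bv\otimes\bone_{n'})=0$ and $(\bz_i\otimes\bz_j')^T(\bone_n\otimes\bv')=0$, so the null vectors of (4) are orthogonal to everything in (1)--(3), and it gets independence of the family (1)--(3) from Lemma \ref{lem:kron-basis}, since the $\gamma$-corrections are a unitriangular modification of the Kronecker-independent set $\{\bone_n\otimes\bone_{n'},\,\bv_i\otimes\bone_{n'},\,\bone_n\otimes\bv_j'\}$. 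You instead split $\R^{nn'}$ into the four tensor blocks built from $\operatorname{span}\{\bone\}$ and $\ker\J$ and annihilate coefficients block by block; your projections are as claimed (the corrections and family (1) live entirely in $V_1\otimes V_1'$, family (3) has no $V_0\otimes V_1'$ component, etc.), and your step from $\sum_i b_i\bw_i=\boldzero$ to $b_i=0$ is sound because it gives $\sum_i b_i\bv_i\in\operatorname{span}\{\bone_n\}$, while the $\bv_i$ lie in the sum of the eigenspaces for eigenvalues $\lambda_i\ne\rho$, which meets the simple Perron eigenspace trivially. The trade-off: the paper's orthogonality observation is shorter, while your decomposition makes visible exactly where each family lives and dispenses with the triangularity remark. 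Two further points in your favor: you justify the nonvanishing of the denominators in $\gamma_i,\gamma_j'$ (the paper only says ``as $|\lambda|\le\rho$, $\gamma$ is well-defined''; your argument pins down that failure would force $\lambda_i=\rho$ and $\rho'=0$, excluded by simplicity of the Perron root once $\bv_i\notin\operatorname{span}\{\bone_n\}$), and you make explicit the tacit hypothesis, shared by the paper, that the $\bv_i$, $i\ge 2$, are not Perron eigenvectors --- which in the final basis statement is automatic from simplicity of $\rho$. Your concluding dimension count $1+(n-1)+(n'-1)+(n-1)(n'-1)=nn'$ for the basis claim matches the paper's.
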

\begin{proof} $\null$
\ben[(1)]
\item  ${M}{\cpj}{M}'(\bone_n\otimes\bone_{n'}) = [{M} \otimes \J_{n'}+\J_n\otimes{M}'](\bone_n\otimes\bone_{n'}) = ({\rho}\bone_n \otimes n' \bone_{n'}) + (n \bone_n \otimes {\rho}' \bone_{n'}) =  {\rho}n' (\bone_n\otimes\bone_{n'}) + n{\rho}'(\bone_n\otimes\bone_{n'}) = (n{\rho}' + n'{\rho}) (\bone_n\otimes\bone_{n'})$.

\item  For simplicity,  let $\bv = \bv_i, \lambda = \lambda_i$, and $\gamma = \gamma_i$.  As $|\lambda| \leq {\rho}$, $\gamma$ is well-defined and satisfies   $(\bv^T\bone_n){\rho}' + \gamma {\rho} n' + \gamma n {\rho}' - n'\lambda \gamma = 0$. Moreover, ${M}{\cpj}{M}' (\bv \otimes \bone_{n'} + \gamma \bone_n \otimes \bone_{n'}) =  [{M} \otimes \J_{n'} + \J_n\otimes{M}'](\bv \otimes \bone_{n'} + \gamma \bone_n \otimes \bone_{n'})  =  ({M} \otimes \J_{n'})(\bv \otimes \bone_{n'}) +  (\J_n\otimes{M}')(\bv \otimes \bone_{n'})  + ({M} \otimes \J_{n'})(\gamma \bone_n \otimes \bone_{n'})  +  (\J_n\otimes{M}')(\gamma \bone_n \otimes \bone_{n'}) =  
\lambda \bv \otimes n'\bone_{n'} + (\bv^T\bone_n) \bone_n \otimes {\rho}'\bone_{n'} + \gamma {\rho} \bone_n \otimes n' \bone_{n'} + \gamma n \bone_n \otimes {\rho}'\bone_{n'} =  
n'\lambda (\bv \otimes \bone_{n'}  + \gamma \bone_n \otimes \bone_{n'}) + ((\bv^T\bone_n){\rho}' + \gamma {\rho} n' + \gamma n {\rho}' - n'\lambda \gamma)( \bone_n \otimes \bone_{n'})=n'\lambda (\bv \otimes \bone_{n'}  + \gamma \bone_n \otimes \bone_{n'})$.

\item The proof is analogous to that of \eqref{cartevec_2}. 

\item  ${M}{\cpj}{M}'(\bz_i \otimes\bz'_j)  =  
[{M} \otimes \J_{n'} + \J_n\otimes{M}'](\bz_i \otimes \bz'_j) = 
(M \otimes \J_{n'})(\bz_i \otimes \bz'_j) + (\J_n \otimes M')(\bz_i \otimes \bz'_j) = 
M\bz_i \otimes \J_{n'} \bz'_j  + \J_n \bz_i \otimes M' \bz'_j = 
M\bz_i \otimes \boldzero +  \boldzero \otimes M' \bz'_j  = \boldzero$.\vspace{-5pt}
 \een

Note that  $(\bz_i \otimes \bz_{j}')^T(\bv\otimes\bone_{n'}) = (\bz_i^T \otimes \bz_{j}'^T)(\bv \otimes\bone_{n'}) = \bz_i^T \bv \otimes {\bz_j'}^T \bone_{n'} = 0$  for any vector $\bv$, and similarly $(\bz_i \otimes \bz_{j}')^T(\bone_n\otimes\bv') = 0$ for any vector $\bv'$.  Thus, the null  vectors $\bz_i \otimes \bz_{j}'$  are orthogonal to the eigenvectors in \eqref{cartevec_1}--\eqref{cartevec_3}.  Moreover, the eigenvectors in \eqref{cartevec_1}--\eqref{cartevec_3} are linearly independent  by Lemma \ref{lem:kron-basis},  hence the eigenvectors of ${M}{\cpj}{M}'$ in \eqref{cartevec_1}--\eqref{cartnullvec} are linearly independent. The statement regarding being a basis follows from the dimension.
\end{proof}
Next we apply Theorem \ref{prop:evectors_cart_prod_general_matrices}  to provide a description of the eigenvectors of the Cartesian product of two transmission regular digraphs. 
\begin{theorem}\label{thm:evectors_cart_prod_digraphs} Let $\dig$ and $\dig'$ be transmission regular digraphs of orders $n$ and $n'$ with transmissions $t$ and $t'$. Let $\{\bv_1=\bone_n, \dots, \bv_k\}$ be a linearly independent set of eigenvectors of $\D(\dig)$ with $\bv_i $ an eigenvector corresponding to $\dev_i \in \dspec(\dig)$, and let $\{\bv_1'=\bone_{n'}, \dots, \bv_{k'}'\}$ be a linearly independent set of eigenvectors of $\D(\dig')$ with $\bv_j' $ an eigenvector corresponding to $\dev_j' \in \dspec(\dig')$. Then

\ben
\item \label{Dcartevec_1}  $\bone_n\otimes\bone_{n'}$ is an eigenvector of $\D(\dig\cp\dig')$  corresponding to the spectral radius,  $nt'+n't$. 
\item \label{Dcartevec_2} For $i=2, \dots, k$, \ 
$\bv_i \otimes \bone_{n'} + \gamma_i\bone_n \otimes \bone_{n'}$, where $\gamma_i =\frac{\bv_i^T\bone_n t'}{n'\dev_i-n't - nt'}$, is an eigenvector of $\D(\dig\cp\dig')$  corresponding to the eigenvalue  $n'\dev_i$.
\item \label{Dcartevec_3} For $j=2, \dots, k'$, \
$\bone_n \otimes \bv_j' +\gamma'_j\bone_n \otimes \bone_{n'}$, where $\gamma_j' =\frac{\bv_j'^T\bone_{n'} t}{n\dev'_j-nt' - n't}$, is an eigenvector of $\D(\dig\cp\dig')$ corresponding to the eigenvalue  $n\dev'_j$.
\item \label{Dcartnullvec} Let $\{\bz_1, \dots, \bz_{n-1}\}$, respectively $\{\bz'_1, \dots, \bz_{n'-1}\}$, be a linearly independent set of null vectors of $\J_n$, respectively $\J_{n'}$. Then, for $i=1, \dots, n-1, \ j=1, \dots, n'-1$,  $\bz_i \otimes \bz_{j}'$ is a null vector of $\D(\dig\cp\dig')$ 
\een
Furthermore, the set of eigenvectors of $\D(\dig\cp\dig')$ described in \eqref{Dcartevec_1}--\eqref{Dcartnullvec} is linearly independent. If  $\{\bv_1=\bone_n,\bv_2, \dots, \bv_n\}$ and $\{\bv_1'=\bone_{n'},\bv_2', \dots, \bv_{n'}'\}$ are bases of eigenvectors for $\D(\dig)$ and $\D(\dig')$, then  the set of eigenvectors of $\D(\dig\cp\dig')$ described in \eqref{Dcartevec_1}--\eqref{Dcartnullvec} is a basis of eigenvectors.
%
%
\end{theorem}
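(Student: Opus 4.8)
The plan is to obtain this theorem as a direct specialization of Theorem \ref{prop:evectors_cart_prod_general_matrices}, taking $M=\D(\dig)$ and $M'=\D(\dig')$ with $\rho=t$ and $\rho'=t'$. Under this identification the four families of vectors produced by Theorem \ref{prop:evectors_cart_prod_general_matrices} become exactly those listed in items \eqref{Dcartevec_1}--\eqref{Dcartnullvec}, once the scalars $\gamma_i,\gamma_j'$ are rewritten with $\rho=t$, $\rho'=t'$, $\lambda_i=\dev_i$, $\lambda_j'=\dev_j'$. In particular, the claimed denominators $n'\dev_i-n't-nt'$ and $n\dev_j'-nt'-n't$ are precisely the specializations of the general denominators $n'\lambda_i-n'\rho-n\rho'$ and $n\lambda_j'-n\rho'-n'\rho$, and the spectral radius $n\rho'+n'\rho$ becomes $nt'+n't$.

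So first I would verify that $\D(\dig)$ and $\D(\dig')$ satisfy the hypotheses of Theorem \ref{prop:evectors_cart_prod_general_matrices}. Nonnegativity is immediate, since every entry of a distance matrix is a distance. Transmission regularity says every row sum of $\D(\dig)$ equals $t$ and every row sum of $\D(\dig')$ equals $t'$, which is exactly $\D(\dig)\bone_n=t\bone_n$ and $\D(\dig')\bone_{n'}=t'\bone_{n'}$ with $t,t'\ge 0$; thus $\bone_n$ and $\bone_{n'}$ are eigenvectors for the spectral radii $t$ and $t'$, matching the choices $\bv_1=\bone_n$, $\bv_1'=\bone_{n'}$. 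The one point requiring a short argument is irreducibility: since $\dig$ is strongly connected of order $n\ge 2$, for every ordered pair $i\ne j$ there is a dipath from $v_i$ to $v_j$, so $d(v_i,v_j)\ge 1>0$; hence every off-diagonal entry of $\D(\dig)$ is positive, its nonzero pattern is the complete digraph, and $\D(\dig)$ is irreducible. The same argument applies to $\D(\dig')$.

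Finally I would invoke the identity $\D(\dig\cp\dig')=\D(\dig)\cpj\D(\dig')$ recorded in Section \ref{s:mtx-prod}, so that applying Theorem \ref{prop:evectors_cart_prod_general_matrices} to $M=\D(\dig)$ and $M'=\D(\dig')$ yields eigenvectors and null vectors of $\D(\dig\cp\dig')$ itself. The linear independence assertion, and the statement that full bases $\{\bv_i\}$ and $\{\bv_j'\}$ of eigenvectors produce a basis of eigenvectors of $\D(\dig\cp\dig')$, both transfer verbatim from Theorem \ref{prop:evectors_cart_prod_general_matrices}. There is essentially no genuine obstacle here; the only things to watch are the irreducibility verification noted above and the bookkeeping of constants, namely confirming that the substitution $\rho\mapsto t$, $\rho'\mapsto t'$ carries the general $\gamma_i$ and $\gamma_j'$ to the digraph expressions stated in \eqref{Dcartevec_2} and \eqref{Dcartevec_3}.
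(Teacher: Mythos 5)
Your proposal is correct and matches the paper exactly: the paper states this theorem without a separate proof, obtaining it precisely as you do by applying Theorem \ref{prop:evectors_cart_prod_general_matrices} to $M=\D(\dig)$, $M'=\D(\dig')$ with $\rho=t$, $\rho'=t'$, via the identity $\D(\dig\cp\dig')=\D(\dig)\cpj\D(\dig')$ from Section \ref{s:mtx-prod}. Your explicit verification of the hypotheses (nonnegativity, row sums, and irreducibility from strong connectedness forcing positive off-diagonal entries) is a detail the paper leaves implicit, and it is handled correctly.
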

\begin{remark}{\rm 
If $\dig$ and $\dig'$ are symmetric digraphs (which is equivalent to considering them as undirected graphs), then their distance matrices are symmetric. As a consequence, $\gamma_i$ and $\gamma'_j$ are always zero in Theorem \ref{thm:evectors_cart_prod_digraphs}, which yields the simpler expression for the eigenvectors of $\D(\dig\cp\dig')$ used in \cite{I2009}.
}\end{remark}


Next we consider the distance Laplacian and the distance signless Laplacian matrices of a Cartesian product of transmission regular digraphs.
\begin{proposition} Let $\dig$ and $\dig'$ be transmission regular digraphs of orders $n\geq 2$ and $n'\geq 2$ with transmissions $t$ and $t'$ respectively, and let  $\dLspec(\dig)=\{{0, \dlev_2},\dots,\dlev_n\}$ and $\dLspec(\dig')=\{{0, {\dlev}'_2},\dots,{\dlev}'_{n'}\}$.  Then $t(\dig\cp \dig')=nt'+n't$ and   
\[\begin{aligned} \dLspec(\dig \cp \dig')&=\{ 0\}\cup\{ n t'+n' \dlev_2,\dots,n t'+n' \dlev_n\}\cup\{ n' t+n {\dlev}'_2,\dots,n' t+n {\dlev}'{n'}\}\\
& \null\hspace{5mm} \cup\ \{(nt'+n't)^{((n-1)(n'-1))}\}.\end{aligned}\]
\end{proposition}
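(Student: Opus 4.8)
The plan is to reduce everything to the distance matrix case already handled in Corollary~\ref{cor:spectrum_cart_prod_general_matrices}, using two observations: that $\dig\cp\dig'$ is itself transmission regular (so its distance Laplacian is a scalar shift of its distance matrix), and that for a transmission regular digraph the distance Laplacian eigenvalues are the images of the distance eigenvalues under the affine map $\mu\mapsto t-\mu$. First I would pin down the transmission. From the identity $\D(\dig\cp\dig')=\D(\dig)\cpj\D(\dig')=\D(\dig)\otimes\J_{n'}+\J_n\otimes\D(\dig')$ recorded in Subsection~\ref{s:mtx-prod}, and the fact that row sums are multiplicative under Kronecker products with $\J_m\bone_m=m\bone_m$, the row of $\D(\dig\cp\dig')$ indexed by $(x,x')$ sums to $n'$ times the $x$-row sum of $\D(\dig)$ plus $n$ times the $x'$-row sum of $\D(\dig')$. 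By transmission regularity of $\dig$ and $\dig'$ this equals $nt'+n't$ for every vertex. Since the transmission of a vertex is exactly the corresponding row sum of the distance matrix, this shows $\dig\cp\dig'$ is transmission regular with $t(\dig\cp\dig')=nt'+n't$.

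Next, because $\dig\cp\dig'$ is $(nt'+n't)$-transmission regular, $\DL(\dig\cp\dig')=(nt'+n't)\I_{nn'}-\D(\dig\cp\dig')$, so its spectrum is the image of $\spec(\D(\dig\cp\dig'))$ under $\mu\mapsto (nt'+n't)-\mu$. I would then apply Corollary~\ref{cor:spectrum_cart_prod_general_matrices} with $M=\D(\dig)$ and $M'=\D(\dig')$: these are irreducible (the digraphs are strongly connected, so all off-diagonal distance entries are positive) nonnegative matrices with $\D(\dig)\bone_n=t\bone_n$ and $\D(\dig')\bone_{n'}=t'\bone_{n'}$. Writing $\dspec(\dig)=\{t,\dev_2,\dots,\dev_n\}$ and $\dspec(\dig')=\{t',\dev'_2,\dots,\dev'_{n'}\}$, the corollary gives
\[\spec(\D(\dig\cp\dig'))=\{nt'+n't,\ n'\dev_2,\dots,n'\dev_n,\ n\dev'_2,\dots,n\dev'_{n'},\ 0^{((n-1)(n'-1))}\}.\]

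Finally I would translate back to the Laplacian. For the transmission regular digraph $\dig$ we have $\DL(\dig)=t\I_n-\D(\dig)$, so the eigenvalues satisfy $\dlev_i=t-\dev_i$ (with the eigenvalue $t$ of $\D(\dig)$ corresponding to $\dlev=0$), and likewise $\dev'_j=t'-\dlev'_j$. Substituting into the shift $\mu\mapsto(nt'+n't)-\mu$: the eigenvalue $nt'+n't$ maps to $0$; each $n'\dev_i=n'(t-\dlev_i)$ maps to $(nt'+n't)-n'(t-\dlev_i)=nt'+n'\dlev_i$; each $n\dev'_j$ maps to $n't+n\dlev'_j$; and each of the $(n-1)(n'-1)$ zeros maps to $nt'+n't$. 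This is precisely the claimed spectrum. The argument is essentially mechanical; the only points needing care are verifying that $\dig\cp\dig'$ is transmission regular (which legitimizes replacing $\DL$ by a scalar shift of $\D$) and keeping the affine substitution $\dev_i=t-\dlev_i$, $\dev'_j=t'-\dlev'_j$ correctly aligned with the eigenvalue indexing.
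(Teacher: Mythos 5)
Your proof is correct and takes essentially the same route as the paper's: convert the distance Laplacian spectra to distance spectra via $\dlev_i = t-\dev_i$, apply the Cartesian product distance spectrum formula (Theorem \ref{thm:TRcartprod-dig_new_new}, which is exactly Corollary \ref{cor:spectrum_cart_prod_general_matrices} with $M=\D(\dig)$, $M'=\D(\dig')$), and shift back by $nt'+n't$. Your explicit Kronecker row-sum verification that $\dig\cp\dig'$ is transmission regular is slightly more careful than the paper, which instead infers $t(\dig\cp\dig')=nt'+n't$ from the fact that the distance spectral radius of a transmission regular digraph equals its transmission, but the substance of the argument is identical.
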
 
\bpf  Since $\DL(\dig)=t\I_n-\D(\dig)$,  $\dspec(\dig)=\{t,t-\dlev_2,\dots,t-\dlev_n\}$ and $\dspec(\dig')=\{t',t'-{\dlev}'_2,\dots,$ $t'-{\dlev}'_{n'}\}$.  Then by Theorem \ref{thm:TRcartprod-dig_new_new}, 
\[\begin{aligned} \dspec(\dig \cp \dig')&=\{nt'+ n't\}\cup\{ n' (t-\dlev_2),\dots,n'(t-\dlev_n)\}\\
& \null\hspace{5mm}\cup\ \{ n (t'-{\dlev}'_2),\dots,n(t'-{\dlev}'_{n'})\} \cup \{0^{((n-1)(n'-1))}\}.\end{aligned}\]
For transmission regular digraphs, the distance spectral radius is the transmission, so  $\trans(\dig \cp \dig')= nt'+n't$, and the formula for $\dLspec(\dig \cp \dig')$ follows from that for $\dspec(\dig \cp \dig')$.
\epf

\begin{proposition} Let $\dig$ and $\dig'$ be transmission regular digraphs of orders $n\geq 2$ and $n'\geq 2$ with transmissions $t$ and $t'$ respectively, and let $\dQspec(\dig)=\{2t,{\dQev_2},\dots,\dQev_{n}\}$ and $\dQspec(\dig')=\{2t',{{\dQev}'_2},\dots,{\dQev}'_{n'}\}$.  Then 
\[\begin{aligned} \dQspec(\dig \cp \dig')&=\{ 2nt'+2n't\}\cup\{ n t'+n' \dQev_2,\dots,n t'+n' \dQev_{n}\}\cup\{ n' t+n {\dQev}'_2,\dots,n' t+n {\dQev}'{n'}\}\\
& \null\hspace{5mm} \cup\ \{(nt'+n't)^{((n-1)(n'-1))}\}.\end{aligned}\]
\end{proposition}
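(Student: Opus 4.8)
The plan is to reduce the statement to the distance spectrum formula of Theorem~\ref{thm:TRcartprod-dig_new_new}, exactly as in the proof of the preceding distance Laplacian proposition. The guiding observation is that, for a transmission regular digraph, the distance signless Laplacian is only a diagonal shift of the distance matrix, so $\dspec$ and $\dQspec$ carry the same information; all the spectral content therefore already resides in Theorem~\ref{thm:TRcartprod-dig_new_new}, and what remains is translation and bookkeeping.

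First I would record how the distance and distance signless Laplacian spectra of the factors are related. Since $\dig$ and $\dig'$ are transmission regular, $T(\dig)=t\I_n$ and $T(\dig')=t'\I_{n'}$, so $\DQ(\dig)=t\I_n+\D(\dig)$ and $\DQ(\dig')=t'\I_{n'}+\D(\dig')$. Because for a transmission regular digraph the distance spectral radius equals the transmission, this gives $\dspec(\dig)=\{t,\dQev_2-t,\dots,\dQev_n-t\}$ and $\dspec(\dig')=\{t',\dQev'_2-t',\dots,\dQev'_{n'}-t'\}$; equivalently $\dev_i=\dQev_i-t$ and $\dev'_j=\dQev'_j-t'$.

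Next I would invoke the transmission regularity of the product. As already established in the distance Laplacian proposition (and because distances in a Cartesian product add coordinatewise, so the transmission of $(x,x')$ is $n't+nt'$), the digraph $\dig\cp\dig'$ is transmission regular with transmission $nt'+n't$. Hence $\DQ(\dig\cp\dig')=(nt'+n't)\I_{nn'}+\D(\dig\cp\dig')$, and $\dQspec(\dig\cp\dig')$ is obtained from $\dspec(\dig\cp\dig')$ by adding $nt'+n't$ to every eigenvalue. Applying Theorem~\ref{thm:TRcartprod-dig_new_new} with $\dev_i=\dQev_i-t$ and $\dev'_j=\dQev'_j-t'$ produces the spectrum of $\D(\dig\cp\dig')$, and the shift then yields the claimed formula.

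The only genuine work — and the sole place an arithmetic slip could occur — is simplifying the shifted eigenvalues; this, rather than any conceptual difficulty, is the crux. I would check the four blocks separately: the distance spectral radius $nt'+n't$ becomes $2nt'+2n't$; each $n'\dev_i=n'(\dQev_i-t)$ becomes $(nt'+n't)+n'(\dQev_i-t)=nt'+n'\dQev_i$; each $n\dev'_j=n(\dQev'_j-t')$ becomes $(nt'+n't)+n(\dQev'_j-t')=n't+n\dQev'_j$; and the $(n-1)(n'-1)$ zero eigenvalues of $\D(\dig\cp\dig')$ are shifted to $(nt'+n't)^{((n-1)(n'-1))}$. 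These four computations reproduce precisely the four pieces of the asserted spectrum, completing the argument.
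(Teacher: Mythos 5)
Your proposal is correct and follows essentially the same route as the paper's proof: shift $\dQspec$ of the factors to $\dspec$ via $\DQ=t\I+\D$, apply Theorem~\ref{thm:TRcartprod-dig_new_new}, and shift back by the product's transmission $nt'+n't$. You merely make explicit two steps the paper leaves tacit — that $\dig\cp\dig'$ is transmission regular with transmission $nt'+n't$, and the block-by-block arithmetic of the final shift — and both of your verifications are accurate.
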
 

\bpf  Since $\DQ(\dig)=t\I_n+\D(\dig)$,  $\dspec(\dig)=\{t,\dQev_2-t,\dots,\dQev_{n}-t\}$ and $\dspec(\dig')=\{t',{\dQev}'_2-t',\dots,$ ${\dQev}'_{n'}-t'\}$.  Then by Theorem \ref{thm:TRcartprod-dig_new_new}, \vspace{-5pt}
\[\begin{aligned} \dspec(\dig \cp \dig')&=\{ nt'+n't\}\cup\{ n' (\dQev_2-t),\dots,n'(\dQev_{n}-t))\}\\ \vspace{-3pt}
& \null\hspace{5mm} \cup\ \{ n ({\dQev}'_2-t'),\dots,n({\dQev}'_{n'}-t')\} \cup \{0^{((n-1)(n'-1))}\}.\end{aligned}\]
The formula for $\dQspec(\dig \cp \dig')$ follows from that for $\dspec(\dig \cp \dig')$.
\epf


\section{Lexicographic products}\label{slexprod}

Motivated by the results in \cite{I2009},  we investigate the spectra of lexicographic products of digraphs. 

Recall that for graphs  $G$ and $G'$  of orders $n$ and $n'$ the {\em lexicographic product}   $G \lexp G'$ is the graph with vertex set $V(G \lexp G')=V(G) \x V(G')$ and edge set 
 $E(G \lexp G')=\{ \{(x,x'),(y,y')\} \ | \ \{x, y\}  \in E(G), \mbox{ or } x=y \mbox{ and } \ \{x',y'\}  \in E(G') \}.$   
The next two results appeared in \cite{CDS} and \cite{I2009} respectively, where the authors used the notation $G[G']$ for  $G \lexp G'$.  

\begin{theorem}
\label{thm:adjacency_lexprodgraphs}{\rm \cite[p. 72]{CDS}}
Let $G$ and $G'$ be graphs of orders $n$ and $n'$, respectively, such that $G'$ is $r'$-regular.
Let $\spec_{\mathcal{A}}(G)=(\alpha_1,\alpha_2,\dots,\alpha_n)$ and $\spec_{\mathcal{A}}(G')=(r',\alpha'_2,\dots,\alpha'_{n'})$.  
Then, \vspace{-5pt}
\[
\spec_{\mathcal{A}}(G{\lexp}G')=\left\{n'\alpha_i + r', \ i = 1, \dots, n \right\} \cup \left\{{\alpha'_j}^{(n)}, \ j = 2, \dots, n' \right\}.\vspace{-5pt}
\] 
\end{theorem}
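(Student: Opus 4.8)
The plan is to work directly with the matrix identity $\A(G\lexp G')=\A(G)\otimes\J_{n'}+\I_n\otimes\A(G')$ recorded in Subsection~\ref{s:mtx-prod}, and to build an explicit eigenbasis of this $nn'\times nn'$ matrix out of Kronecker products of eigenvectors of the two factors. The crucial structural fact I would exploit is the $r'$-regularity of $G'$: since $\A(G')\bone_{n'}=r'\bone_{n'}$ and $\A(G')$ is real symmetric, I can pick eigenvectors $\bw_2,\dots,\bw_{n'}$ of $\A(G')$, with $\A(G')\bw_j=\alpha'_j\bw_j$, that together with $\bone_{n'}$ form an orthogonal eigenbasis of $\A(G')$; in particular each $\bw_j$ is orthogonal to $\bone_{n'}$, hence $\J_{n'}\bw_j=\boldzero$. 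This orthogonality is exactly what will decouple the two families of eigenvectors below.

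Next I would split into two cases according to the second tensor factor. First, using $\bone_{n'}$: for an orthonormal eigenbasis $\bu_1,\dots,\bu_n$ of the (symmetric) matrix $\A(G)$ with $\A(G)\bu_i=\alpha_i\bu_i$, a direct computation gives
\[
\bigl(\A(G)\otimes\J_{n'}+\I_n\otimes\A(G')\bigr)(\bu_i\otimes\bone_{n'})=(n'\alpha_i+r')(\bu_i\otimes\bone_{n'}),
\]
since $\J_{n'}\bone_{n'}=n'\bone_{n'}$ and $\A(G')\bone_{n'}=r'\bone_{n'}$. This produces the $n$ eigenvalues $n'\alpha_i+r'$. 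Second, using $\bw_j$ for $j\ge 2$: because $\J_{n'}\bw_j=\boldzero$, the $\A(G)\otimes\J_{n'}$ term is annihilated and
\[
\bigl(\A(G)\otimes\J_{n'}+\I_n\otimes\A(G')\bigr)(\bu_i\otimes\bw_j)=\alpha'_j(\bu_i\otimes\bw_j)
\]
for every $i=1,\dots,n$. Thus each $\alpha'_j$ (for $j=2,\dots,n'$) occurs for all $n$ choices of $i$, giving multiplicity at least $n$.

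Finally I would verify that these $nn'$ vectors form a basis. Since $\{\bu_1,\dots,\bu_n\}$ is a basis of $\R^n$ and $\{\bone_{n'},\bw_2,\dots,\bw_{n'}\}$ is a basis of $\R^{n'}$, Lemma~\ref{lem:kron-basis} shows the $nn'$ products $\bu_i\otimes\bone_{n'}$ and $\bu_i\otimes\bw_j$ are linearly independent, hence form an eigenbasis; the count $n+n(n'-1)=nn'$ confirms that the whole spectrum has been accounted for, yielding the claimed multiset. I expect the only genuine subtlety — the \emph{main obstacle} — to be the reliance on the regularity of $G'$: it is precisely what makes $\bone_{n'}$ an eigenvector of $\A(G')$ orthogonal to the remaining eigenvectors, so that $\J_{n'}$ annihilates the latter and the cross term in $\A(G)\otimes\J_{n'}$ drops out. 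Note that no regularity of $G$ is needed, only that $\A(G)$ is symmetric and therefore admits a full eigenbasis.
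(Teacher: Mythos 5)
Your proof is correct. Note, however, that the paper does not actually prove this statement: it is quoted from \cite[p.~72]{CDS}, and the closest thing to an in-paper proof is the digraph generalization, Theorem \ref{thm:spectrum_lex_prod_general_matrices}, whose method is genuinely different from yours. There, the authors conjugate $M\lexp M'=M\otimes \J_{n'}+\I_n\otimes M'$ by $C\otimes C'$, where $C$ puts $M$ in Jordan form and $C'$ simultaneously reduces $\J_{n'}$ and $M'$ (Proposition \ref{prop:MJ-JCF}), obtaining an upper triangular matrix whose diagonal is read off to give the spectrum. That route deliberately avoids any appeal to an eigenbasis, which is essential in their setting: for digraphs the matrices need not be diagonalizable, and Example \ref{example_lexp_non_diag} shows $M\lexp M'$ can fail to be diagonalizable even when $M$ and $M'$ both are. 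Your argument instead exploits symmetry twice --- to get a full orthonormal eigenbasis of $\A(G)$, and to choose the eigenvectors $\bw_2,\dots,\bw_{n'}$ of $\A(G')$ orthogonal to $\bone_{n'}$ so that $\J_{n'}\bw_j=\boldzero$ kills the cross term. What your approach buys is an explicit eigenbasis of $\A(G\lexp G')$ and an elementary, self-contained proof; indeed it is exactly the symmetric specialization of the paper's eigenvector result (Theorem \ref{prop:evectors_lex_prod_general_matrices} and Corollary \ref{cor_e_vec_adjacency_lex}), where your orthogonality $\bv_j'^T\bone_{n'}=0$ forces the correction coefficients $\gamma_{ij}$ to vanish. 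What it gives up is generality: the orthogonality step has no analogue for non-symmetric $M'$, so your method cannot be carried over to the digraph theorems that are the paper's actual goal, whereas the triangularization method can. One small point you handled implicitly but correctly: if $G'$ is disconnected, $r'$ recurs among the $\alpha'_j$, $j\ge 2$; your choice of an orthogonal basis of the $r'$-eigenspace containing $\bone_{n'}$ makes those repeated copies land in the second family with multiplicity $n$ each, consistent with the claimed multiset.
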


\begin{theorem}\label{thm:lexprodgraphs}{\rm \cite
{I2009}}  
Let $G$ and $G'$ be graphs of orders $n\geq 2$ and $n'$, respectively, such that $G$ is connected  and $G'$ is $r'$-regular. Let  $\dspec(G)=\{\dev_1, \dots, \dev_n\}$ and $\spec_\A(G')=\{r', \alpha'_2, \dots, \alpha'_{n'}\}$.  Then, \vspace{-5pt}
 \[\dspec(G \lexp G')=\{ n'\dev_i + 2n' -2-r', \ i = 1, \dots, n\}\cup\{ -(\alpha'_j +2)^{(n)}, \ j = 2, \dots, n'\}.\vspace{-5pt}\] 
 \end{theorem}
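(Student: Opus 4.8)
The plan is to first identify the distance matrix of $G \lexp G'$ with one of the matrix constructions of Section \ref{s:mtx-prod}, and then use the regularity of $G'$ together with the symmetry of all matrices involved to write down an explicit eigenbasis.

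The first step is a distance computation in $G \lexp G'$. For vertices $(x,x')$ and $(y,y')$ I would argue as follows. If $x \neq y$, then the distance equals $d_G(x,y)$: any edge $\{x,z\} \in E(G)$ lifts to edges joining $(x,\cdot)$ to $(z,\cdot)$ for arbitrary second coordinates, so a shortest $x$-to-$y$ path in $G$ lifts to a path of the same length, while any path in $G \lexp G'$ projects (via first coordinate) to a walk from $x$ to $y$ in $G$ and hence uses at least $d_G(x,y)$ coordinate-changing steps. If $x = y$ and $x' \neq y'$, the distance is $1$ when $\{x',y'\} \in E(G')$ and $2$ otherwise, the value $2$ being attained by stepping out to a neighboring $G$-layer and back, which is possible precisely because $G$ is connected with $n \geq 2$. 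Collecting the diagonal and off-diagonal blocks, this gives
\[\D(G \lexp G') = \D(G) \otimes \J_{n'} + \I_n \otimes \big(2\J_{n'} - 2\I_{n'} - \A(G')\big) = \D(G) \lexp \big(2\J_{n'} - 2\I_{n'} - \A(G')\big),\]
where I write $M' := 2\J_{n'} - 2\I_{n'} - \A(G')$ for the common diagonal block.

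Next I would use that $G'$ is $r'$-regular, so $\A(G')\bone_{n'} = r'\bone_{n'}$ and $\A(G')$ commutes with $\J_{n'}$; thus there is an orthogonal eigenbasis $\bone_{n'}, \bw_2, \dots, \bw_{n'}$ of $\A(G')$ with $\A(G')\bw_j = \alpha'_j \bw_j$ and $\bw_j \perp \bone_{n'}$, whence $\J_{n'}\bw_j = \boldzero$. A direct computation then gives $M'\bone_{n'} = (2n' - 2 - r')\bone_{n'}$ and $M'\bw_j = -(\alpha'_j + 2)\bw_j$. Since $G$ is a connected graph, $\D(G)$ is symmetric; let $\bu_1, \dots, \bu_n$ be an eigenbasis with $\D(G)\bu_i = \dev_i \bu_i$.

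Finally I would form Kronecker products and apply the identity $\D(G\lexp G') = \D(G)\otimes\J_{n'} + \I_n\otimes M'$ factorwise. Using $\J_{n'}\bone_{n'} = n'\bone_{n'}$, the vector $\bu_i \otimes \bone_{n'}$ is an eigenvector with eigenvalue $n'\dev_i + 2n' - 2 - r'$; using $\J_{n'}\bw_j = \boldzero$, the vector $\bu_i \otimes \bw_j$ is an eigenvector with eigenvalue $-(\alpha'_j + 2)$, independent of $i$, so each value $-(\alpha'_j+2)$ occurs with multiplicity at least $n$. By Lemma \ref{lem:kron-basis} the $nn'$ vectors $\{\bu_i \otimes \bone_{n'}\}$ and $\{\bu_i \otimes \bw_j\}$ are linearly independent, hence form a basis, so these are exactly the $nn'$ eigenvalues with their multiplicities, yielding the stated spectrum. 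The main obstacle is the distance computation at the outset: establishing the diagonal block $2\J_{n'} - 2\I_{n'} - \A(G')$ and in particular justifying the distance-$2$ claim, which is exactly where connectedness of $G$ and $n \geq 2$ are used; once the matrix identity is secured, the eigenvector verification is routine.
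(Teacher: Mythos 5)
Your proof is correct, but it is not the paper's own route: the paper states this theorem as a quoted result of Indulal \cite{I2009} and does not reprove it, reserving its machinery for the digraph generalization. Your opening step --- the distance computation giving $\D(G \lexp G') = \D(G)\lexp\bigl(2\J_{n'}-2\I_{n'}-\A(G')\bigr)$ --- is precisely the identity of Observation \ref{obs:lex-doubly-directed} specialized to graphs (every edge of a connected graph on $n\geq 2$ vertices is a doubly directed arc, so $\xi_G(x)=2$), and your block matrix $2\J_{n'}-2\I_{n'}-\A(G')$ is the matrix $B=\A+2\A(\overline{\phantom{x}\cdot\phantom{x}})$ of Proposition \ref{o:adj-comp}. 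From there the routes diverge. The paper, unable to assume diagonalizability for digraphs (see Example \ref{example_lexp_non_diag}), proves the digraph analogue (Theorem \ref{thm:TRlexprod-dig_doubly_directed}) by a Jordan-canonical-form triangularization (Theorem \ref{thm:spectrum_lex_prod_general_matrices}) plus a separate geometric-multiplicity analysis (Theorem \ref{thm_geometric_mult_lex_product_matrices}); its eigenvector description in Corollary \ref{cor_e_vec_distance_lex_doubly_directed} consequently carries correction terms $\beta_j$ and $\gamma_{ij}$. You instead exploit symmetry: choosing an orthogonal eigenbasis of $\A(G')$ with $\bw_j\perp\bone_{n'}$ forces $\J_{n'}\bw_j=\boldzero$, so the pure Kronecker products $\bu_i\otimes\bone_{n'}$ and $\bu_i\otimes\bw_j$ are already eigenvectors --- exactly the reason those correction terms vanish in the symmetric case --- and Lemma \ref{lem:kron-basis} gives $nn'$ independent eigenvectors, pinning down the spectrum with multiplicities. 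Your approach is more elementary and buys a genuine extra: an explicit eigenbasis showing $\D(G\lexp G')$ is diagonalizable, with multiplicities read off directly rather than via triangular diagonal entries. The paper's JCF route buys generality: it needs no symmetry, applies to nondiagonalizable factors, and recovers geometric multiplicities even when eigenvectors of the second factor fail to be orthogonal to $\bone_{n'}$. Your handling of the one delicate point --- the distance-$2$ claim via a detour through a neighboring $G$-layer, using connectedness and $n\geq 2$, with the projection argument for lower bounds when $x\neq y$ --- is sound.
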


 To derive results on the  spectra of lexicographic products of digraphs, we  first investigate the spectra of the matrix product $M\lexp M'$ as defined in Section \ref{s:mtx-prod}.

\begin{theorem}
\label{thm:spectrum_lex_prod_general_matrices} Let $M\in\mathbb{R}^{n\times n}$ and  $M'\in\mathbb{R}^{n'\times n'}$ be irreducible nonnegative matrices such that $M'\bone_{n'}=\rho'\bone_{n'}$ for some $\rho'\in\mathbb{R}$.  Let $\spec(M)=\{\rho(M)=\lam_1,\lambda_2,\dots,\lambda_n\}$ and $\spec(M')=\{\rho',\lambda'_2,\dots,\lambda'_{n'}\}$. Then\vspace{-5pt}
\[
\spec(M\lexp M')= \left\{n'\lambda_i + \rho', \ i = 1, \dots, n \right\} \cup \left\{{\lambda'_j}^{(n)}, \ j = 2, \dots, n' \right\}.
\vspace{-5pt}\]
\end{theorem}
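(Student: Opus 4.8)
The plan is to follow the Jordan-form strategy used for $M\cpj M'$ in Theorem~\ref{thm:M-JCF-CartProd}, but with an \emph{asymmetric} choice of change of basis dictated by the asymmetric construction $M\lexp M'=M\otimes\J_{n'}+\I_n\otimes M'$. Since $M'$ is the factor paired with $\J_{n'}$ and satisfies $M'\bone_{n'}=\rho'\bone_{n'}$, I would apply Proposition~\ref{prop:MJ-JCF} to $M'$ to obtain an invertible $C'$ with $C'^{-1}\J_{n'}C'=\diag(n',0,\dots,0)$ and $C'^{-1}M'C'=\mtx{\rho' & \bx'^T\\ \boldzero & R'}$ for some Jordan matrix $R'$ carrying the eigenvalues $\lambda'_2,\dots,\lambda'_{n'}$. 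For the factor $M$, which is paired only with $\I_n$, no compatibility with $\bone_n$ is needed, so I would simply take $C$ with $C^{-1}MC=\JCF_M$ upper triangular with diagonal $\lambda_1,\dots,\lambda_n$ (here $\lambda_1=\rho(M)$).

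Next I would conjugate by $C\otimes C'$ and use the mixed-product property of the Kronecker product to get
\[(C\otimes C')^{-1}(M\lexp M')(C\otimes C')=\JCF_M\otimes\diag(n',0,\dots,0)+\I_n\otimes\mtx{\rho' & \bx'^T\\ \boldzero & R'}.\]
The key structural observation is that, after reordering the $nn'$ coordinates so that all coordinates whose second ($M'$-)index equals $1$ come first, this matrix becomes block upper triangular of the form $\mtx{n'\JCF_M+\rho'\I_n & \I_n\otimes\bx'^T\\ O & \I_n\otimes R'}$. Indeed, the summand $\JCF_M\otimes\diag(n',0,\dots,0)$ has a nonzero entry only when both the row and column second-indices equal $1$, producing $n'\JCF_M$ in the leading block and nothing elsewhere, while $\I_n\otimes\mtx{\rho' & \bx'^T\\ \boldzero & R'}$ contributes $\rho'\I_n$ and the coupling $\I_n\otimes\bx'^T$ in that first block row and the block-diagonal copy $\I_n\otimes R'$ on the remaining coordinates; crucially, the $\boldzero$ block of $\mtx{\rho' & \bx'^T\\ \boldzero & R'}$ forces the lower-left block to vanish, so the subspace spanned by the coordinates with second index $\ge 2$ is invariant.

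From the block upper triangular form the spectrum is read off as the union of the spectra of the two diagonal blocks. The leading block $n'\JCF_M+\rho'\I_n$ is upper triangular with diagonal entries $n'\lambda_i+\rho'$, giving $\{n'\lambda_i+\rho':i=1,\dots,n\}$, and the trailing block $\I_n\otimes R'$ has spectrum $\{{\lambda'_j}^{(n)}:j=2,\dots,n'\}$; together these form exactly the claimed multiset. Note that no appeal to Lemma~\ref{lem_sylvester_1645_10nov} is needed for the spectrum, since block triangularity already determines the eigenvalues (the lemma would only be required to split the diagonal blocks into a genuine direct sum, which the eigenvalue count does not demand). The main obstacle is purely organizational: verifying that the single off-diagonal coupling $\bx'^T$ lives entirely in the upper-triangular block and hence does not perturb the eigenvalues, and correctly tracking the reordering (the perfect shuffle grouping coordinates by their inner index) that exposes the triangular structure. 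The irreducibility of $M'$ enters only to guarantee, via Proposition~\ref{prop:MJ-JCF}, that $\rho'$ is a simple eigenvalue, so that the common Perron direction $\bone_{n'}$ can be peeled off simultaneously from $\J_{n'}$ and $M'$.
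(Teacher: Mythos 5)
Your proposal is correct and follows essentially the same route as the paper: the same asymmetric choice of bases (Proposition~\ref{prop:MJ-JCF} applied to $M'$ against $\J_{n'}$, a plain Jordan basis for $M$), the same conjugation by $C\otimes C'$, and the same conclusion by triangularity without any appeal to Lemma~\ref{lem_sylvester_1645_10nov}. The only difference is cosmetic: the paper observes that $\JCF_M\otimes\diag(n',0,\dots,0)+\I_n\otimes\mtx{\rho' & \bx'^T\\ \boldzero & R'}$ is already upper triangular in the standard Kronecker ordering and reads the spectrum off its diagonal, whereas you apply a shuffle permutation to display the equivalent block upper triangular form explicitly.
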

\bpf Choose $C$ such that $ C^{-1}MC=\mtx{\lam_1 & \boldzero^T \\ \boldzero & R}=\JCF_{M}$  where the diagonal elements of $R$ are $\lam_2,\dots,\lam_{n}$.  Use Proposition \ref{prop:MJ-JCF} to choose $C'$ such that $C'^{-1}\J_{n'} C'=\mtx{n' & \boldzero^T \\ \boldzero & O}=\diag(n',0,\dots,0)$ and $ C'^{-1}M'C'=\mtx{\rho' & \bx'^T \\ \boldzero & R'}$ where $\bx'\in\R^{n'-1}$ and  $R'$ is the part of $\JCF_{M'}$ associated with eigenvalues $\lam'_2,\dots,\lam'_{n'}$, all of which differ from $\rho'$. 
 Then
$(C^{-1}\otimes C'^{-1})(M\lexp M')(C\otimes C')=$
\renewcommand{\arraystretch}{1.3}
$\mtx{\lam_1 & \boldzero^T \\ \boldzero & R}\otimes \diag(n',0,\dots,0)+\I_n\otimes \mtx{\rho' & \bx'^T \\ \boldzero & R'}=$\\
\[{\scriptsize \lb \begin{array}{cc|cc|cc|c|cc} 
 \lam_1 n'& \boldzero^T & 0 & \boldzero^T& 0 & \boldzero^T & \cdots & 0 &\boldzero^T \\ 
\boldzero & O & \boldzero & O & \boldzero & O& \cdots & \boldzero & O    \\
\hline
 0 & \boldzero^T& \lam_2 n' & \boldzero^T & r_{12}n' & \boldzero^T &  \cdots & 0 &\boldzero^T \\ 
\boldzero & O & \boldzero & O & \boldzero & O & \cdots & \boldzero & O  \\ 
\hline
 0 & \boldzero^T  & 0 & \boldzero^T & \lam_3 n' & \boldzero^T&  \cdots & 0 &\boldzero^T \\ 
\boldzero & O & \boldzero & O & \boldzero & O & \cdots & \boldzero & O  \\ 
\hline
\vdots & \vdots &\vdots &\vdots &\vdots &\vdots & \ddots &\vdots &\vdots\\
\hline
%
%
 0 & \boldzero^T  & 0 & \boldzero^T & 0 & \boldzero^T&  \cdots & \lam_n n' &\boldzero^T \\ 
\boldzero & O & \boldzero & O & \boldzero & O & \cdots & \boldzero & O  \\ 
\end{array}\rb +  
\lb \begin{array}{cc|cc|cc|c|cc} 
\rho' & \bx'^T & 0 & \boldzero^T& 0 & \boldzero^T & \cdots & 0 &\boldzero^T \\ 
\boldzero & R' & \boldzero & O & \boldzero & O& \cdots & \boldzero & O    \\
\hline
 0 & \boldzero^T& \rho' & \bx'^T& 0 & \boldzero^T &  \cdots & 0 &\boldzero^T \\ 
\boldzero & O & \boldzero & R' & \boldzero & O & \cdots & \boldzero & O  \\ 
\hline
 0 & \boldzero^T  & 0 & \boldzero^T & \rho' & \bx'^T &  \cdots & 0 &\boldzero^T \\ 
\boldzero & O & \boldzero & O & \boldzero & R' & \cdots & \boldzero & O  \\ 
\hline
\vdots & \vdots &\vdots &\vdots &\vdots &\vdots & \ddots &\vdots &\vdots\\
\hline
%
%
 0 & \boldzero^T  & 0 & \boldzero^T & 0 & \boldzero^T&  \cdots & \rho' & \bx'^T \\ 
\boldzero & O & \boldzero & O & \boldzero & O & \cdots & \boldzero & R'  
\end{array}\rb=
}\]
\[{\scriptsize \lb \begin{array}{cc|cc|cc|c|cc} 
\lam_1 n' +\rho'& \bx'^T & 0 & \boldzero^T& 0 & \boldzero^T & \cdots & 0 &\boldzero^T \\ 
\boldzero & R' & \boldzero & O & \boldzero & O& \cdots & \boldzero & O    \\
\hline
 0 & \boldzero^T& \lam_2n'+\rho' & \bx'^T & r_{12}n' & \boldzero^T &  \cdots & 0 &\boldzero^T \\ 
\boldzero & O & \boldzero & R' & \boldzero & O & \cdots & \boldzero & O  \\ 
\hline
 0 & \boldzero^T  & 0 & \boldzero^T & \lam_3n'+\rho' & \bx'^T&  \cdots & 0 &\boldzero^T \\ 
\boldzero & O & \boldzero & O & \boldzero & R' & \cdots & \boldzero & O  \\ 
\hline
\vdots & \vdots &\vdots &\vdots &\vdots &\vdots & \ddots &\vdots &\vdots\\
\hline
%
%
 0 & \boldzero^T  & 0 & \boldzero^T & 0 & \boldzero^T&  \cdots & \lam_nn'+\rho' &\bx'^T \\ 
\boldzero & O & \boldzero & O & \boldzero & O & \cdots & \boldzero & R'  \\ 
\end{array}\rb}\!.
\]
\renewcommand{\arraystretch}{1.3}
Since $(C^{-1}\otimes C'^{-1})(M\lexp M')(C\otimes C')$ is an upper triangular matrix,  the multiset of its diagonal elements is $\spec(M\lexp M')$.  
The multiset of diagonal elements is  $\left\{n'\lambda_i + \rho', \ i = 1, \dots, n \right\} \cup \left\{{\lambda'_j}^{(n)}, \ j = 2, \dots, n' \right\}$.
\epf

Even if $M$ and $M'$ are diagonalizable, it need not be the case that $M\lexp M'$ is diagonalizable, as the next example shows. 

\begin{example} \label{example_lexp_non_diag}
{\rm Consider the matrices
\[
M=
\begin{bmatrix}
0 & \frac{1}{3}(28-\sqrt{7})\\
\frac{1}{3}(28-\sqrt{7}) & 0
\end{bmatrix},\quad
M'=
\begin{bmatrix}
12 & 6 & 12\\
7 & 13 & 10\\
6 & 15 & 9
\end{bmatrix}
\]
and observe that they are both irreducible nonnegative matrices, and $M'\bone_3=30\,\bone_3$. Since $\spec(M)=\left\{\frac{1}{3}(28-\sqrt{7}),-\frac{1}{3}(28-\sqrt{7})\right\}$ and $\spec(M')=\left\{30,2+\sqrt{7},2-\sqrt{7}\right\}$, we see that both $M$ and $M'$ are diagonalizable. However, one finds that
\[
\JCF_{M\lexp M'}=
\begin{bmatrix}
 58-\sqrt{7} & 0 & 0 & 0 & 0 & 0 \\
 0 & 2+\sqrt{7} & 1 & 0 & 0 & 0 \\
 0 & 0 & 2+\sqrt{7} & 0 & 0 & 0 \\
 0 & 0 & 0 & 2+\sqrt{7} & 0 & 0 \\
 0 & 0 & 0 & 0 & 2-\sqrt{7} & 0 \\
 0 & 0 & 0 & 0 & 0 & 2-\sqrt{7}
\end{bmatrix}\!\!,
\]
which means that $M\lexp M'$ is not diagonalizable.
}
\end{example}

Based on the apparent anomaly of Example \ref{example_lexp_non_diag}, we now investigate the geometric multiplicities of the eigenvalues of $M\lexp M'$.
\begin{theorem}
\label{thm_geometric_mult_lex_product_matrices}
Let $M\in\mathbb{R}^{n\times n}$, $M'\in\mathbb{R}^{n'\times n'}$ be irreducible nonnegative matrices such that $M'\bone_{n'}=\rho'\bone_{n'}$ for some $\rho'\in\mathbb{R}$. Given $z\in \mathbb{C}$, define $\tilde z=\frac{z-\rho'}{n'}$,  
$g=\gmult_{M}(\tilde z)$ 
and $g'=\gmult_{M'}(z)$. Then

\begin{equation}
\label{eqn_geom_mult_description_1551_10_nov}
\gmult_{M\lexp M'}(z)=
\begin{array}{l}
\left\{\begin{aligned}
&g &\mbox{if }\;z\not\in\spec(M')\setminus\{\rho'\},\;\tilde z\in\spec(M);\\
&ng' &\mbox{if }\; z\in\spec(M')\setminus\{\rho'\},\;\tilde z\not\in\spec(M);\\
&ng'+g &\mbox{if }\;z\in\spec(M')\setminus\{\rho'\},\;\tilde z\in\spec(M),\;ES_{M'}(z)\perp \bone_{n'} ;\\
&{ng'} &\mbox{if }\;z\in\spec(M')\setminus\{\rho'\},\;\tilde z\in\spec(M),\;ES_{M'}(z)\not\perp \bone_{n'} ;\\
&0 &\mbox{otherwise}.
\end{aligned}\right.
\end{array}
\end{equation}
\end{theorem}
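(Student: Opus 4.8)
The plan is to compute $\gmult_{M\lexp M'}(z)=\dim\ker\!\big((M\lexp M')-z\I_{nn'}\big)$ directly, by writing a vector $\bw\in\C^{nn'}$ in block form $\bw=\sum_{i=1}^n\mathbf{e}_i\otimes\bw_i$ with $\bw_i\in\C^{n'}$ and setting $s_i=\bone_{n'}^T\bw_i$, $\mathbf{s}=(s_1,\dots,s_n)^T$. Since $\J_{n'}\bw_i=s_i\bone_{n'}$, a short computation with $M\lexp M'=M\otimes\J_{n'}+\I_n\otimes M'$ gives $(M\otimes\J_{n'})\bw=(M\mathbf{s})\otimes\bone_{n'}$, so that $\bw$ lies in the kernel if and only if, for every $i$,
\[(M'-z\I_{n'})\bw_i=-(M\mathbf{s})_i\,\bone_{n'};\]
I will call these the \emph{block equations}. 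The whole analysis reduces to understanding this family of linear systems, which are coupled only through $\mathbf{s}$, and the quantity $\tilde z=\tfrac{z-\rho'}{n'}$ enters naturally through the relation $s_i=\bone_{n'}^T\bw_i$.

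First I would dispose of the case $z=\rho'$ (equivalently $\tilde z=0$). Here $M'-\rho'\I_{n'}$ is singular, and since $M'$ is irreducible nonnegative with $M'\bone_{n'}=\rho'\bone_{n'}$, Perron--Frobenius gives that $\rho'$ is a simple eigenvalue with a strictly positive left eigenvector $\mathbf{y}$. Applying $\mathbf{y}^T$ to the block equations and using $\mathbf{y}^T\bone_{n'}\ne0$ forces $M\mathbf{s}=\boldzero$, after which each equation reads $(M'-\rho'\I_{n'})\bw_i=\boldzero$, so $\bw_i$ is a multiple of $\bone_{n'}$; tracking the consistency $s_i=\bone_{n'}^T\bw_i$ then identifies the kernel with $\ker M$, of dimension $\gmult_M(0)=g$. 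This matches the first case when $0\in\spec(M)$ and the last case otherwise, as required since $z=\rho'\notin\spec(M')\setminus\{\rho'\}$.

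For $z\ne\rho'$ I would exploit the crucial simplification that $\bone_{n'}\in\operatorname{range}(M'-z\I_{n'})$: indeed $(M'-z\I_{n'})\bone_{n'}=(\rho'-z)\bone_{n'}$ yields $\bone_{n'}=\tfrac{1}{\rho'-z}(M'-z\I_{n'})\bone_{n'}$. Hence the block equations are always solvable, with general solution $\bw_i=\tfrac{(M\mathbf{s})_i}{z-\rho'}\bone_{n'}+\mathbf{h}_i$ for $\mathbf{h}_i\in K:=ES_{M'}(z)$. Imposing $s_i=\bone_{n'}^T\bw_i$ and using $\tilde z=\tfrac{z-\rho'}{n'}$ collapses the system to the single condition $(M-\tilde z\I_n)\mathbf{s}=-\tilde z\,\mathbf{h}$, where $\mathbf{h}=(\bone_{n'}^T\mathbf{h}_1,\dots,\bone_{n'}^T\mathbf{h}_n)^T$. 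I would then check that $\bw\mapsto(\mathbf{s},\mathbf{h}_1,\dots,\mathbf{h}_n)$ is a linear isomorphism from the kernel onto $S:=\{(\mathbf{s},\mathbf{h}_\bullet)\in\C^n\times K^n:(M-\tilde z\I_n)\mathbf{s}=-\tilde z\,\mathbf{h}\}$, so that $\gmult_{M\lexp M'}(z)=\dim S$.

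Finally I would compute $\dim S$ by splitting on whether $K=ES_{M'}(z)$ is orthogonal to $\bone_{n'}$. If $K\perp\bone_{n'}$ (in particular if $z\notin\spec(M')$, where $K=\{\boldzero\}$ and $g'=0$), then $\mathbf{h}=\boldzero$ for every choice of the $\mathbf{h}_i$, the condition reduces to $\mathbf{s}\in ES_M(\tilde z)$, and $S\cong ES_M(\tilde z)\times K^n$ has dimension $g+ng'$; this gives $g$ when $g'=0$ (first/last case) and $ng'+g$ in general (third case). If $K\not\perp\bone_{n'}$, then $\mathbf{h}_i\mapsto\bone_{n'}^T\mathbf{h}_i$ is a nonzero functional on $K$, so $(\mathbf{h}_1,\dots,\mathbf{h}_n)\mapsto\mathbf{h}$ maps onto $\C^n$; since $\tilde z\ne0$, the linear map $(\mathbf{s},\mathbf{h}_\bullet)\mapsto(M-\tilde z\I_n)\mathbf{s}+\tilde z\,\mathbf{h}$ is then surjective, whence $\dim S=(n+ng')-n=ng'$ (second and fourth cases). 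Sorting these outcomes by whether $\tilde z\in\spec(M)$ reproduces the five listed cases. I expect the main obstacle to be the organization of this last dimension count: establishing the isomorphism onto $S$ precisely, and recognizing via the surjectivity of the functional $\bone_{n'}^T|_K$ that orthogonality of $ES_{M'}(z)$ to $\bone_{n'}$ is exactly the dividing line responsible for the extra summand $g$.
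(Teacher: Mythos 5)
Your proposal is correct, and it takes a genuinely different route from the paper's proof. I verified the key steps: the block equations $(M'-z\I_{n'})\bw_i=-(M\mathbf{s})_i\bone_{n'}$ do characterize $\ker\bigl((M\lexp M')-z\I_{nn'}\bigr)$; at $z=\rho'$ the positive left Perron vector of the irreducible $M'$ forces $M\mathbf{s}=\boldzero$, and geometric simplicity of $\rho'$ gives $\bw_i\in\operatorname{span}\{\bone_{n'}\}$, identifying the kernel with $\ker M$ (dimension $g$, consistent with the first and last cases); for $z\ne\rho'$ the particular solution $\frac{(M\mathbf{s})_i}{z-\rho'}\bone_{n'}$ exists because $(M'-z\I_{n'})\bone_{n'}=(\rho'-z)\bone_{n'}$, the consistency requirement $\bone_{n'}^T\bw_i=s_i$ collapses via $z-\rho'=n'\tilde z$ to exactly $(M-\tilde z\I_n)\mathbf{s}=-\tilde z\,\mathbf{h}$, and your map onto $S$ is a genuine bijection since the collapsed equation restores $\bone_{n'}^T\bw_i=s_i$. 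Both dimension counts check out: when $ES_{M'}(z)\perp\bone_{n'}$ (including $z\notin\spec(M')$, where $K=\{\boldzero\}$) one gets $\dim S=g+ng'$, and when $ES_{M'}(z)\not\perp\bone_{n'}$ the map $(\mathbf{s},\mathbf{h}_\bullet)\mapsto(M-\tilde z\I_n)\mathbf{s}+\tilde z\,\mathbf{h}$ is onto $\C^n$ because $\tilde z\neq 0$ and $\bone_{n'}^T$ restricted to $ES_{M'}(z)$ is a nonzero functional, giving $\dim S=ng'$; sorting by whether $\tilde z\in\spec(M)$ reproduces all five cases of \eqref{eqn_geom_mult_description_1551_10_nov}. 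The paper argues quite differently: it reuses the triangularization $(C^{-1}\otimes C'^{-1})(M\lexp M')(C\otimes C')$ from the proof of Theorem \ref{thm:spectrum_lex_prod_general_matrices}, builds a bespoke similarity $C'=V-\frac{1}{n'}\J_{n'}V+\bone_{n'}\mathbf{e}_1^T$ from a generalized eigenbasis $V$ of $M'$ normalized so that $\bone_{n'}^TV=n'\ba^T$ records which Jordan columns are non-orthogonal to $\bone_{n'}$, shows the first-row perturbation $\hat\bx^T=\ba^T\JCF_{M'}-\rho'\ba^T$ has nonzero entries precisely in the non-orthogonal situation, and then extracts $\gmult_{M\lexp M'}(z)$ as $t-\rank(B-z\I_t)$ for a diagonal block $B$ isolated by a permutation together with Lemma \ref{lem_sylvester_1645_10nov}. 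Your elementary kernel computation avoids Jordan forms, generalized eigenvectors, and the Sylvester-equation lemma entirely; as a by-product it exhibits eigenvectors explicitly (essentially recovering Theorem \ref{prop:evectors_lex_prod_general_matrices}) and shows that irreducibility of $M$ is never needed, with irreducibility of $M'$ invoked only at $z=\rho'$. What the paper's route buys in exchange is uniformity with the rest of the article --- the same change of basis drives the spectrum formula and all the subsequent corollaries --- and finer information about the triangular block structure, which sits closer to the full Jordan form of $M\lexp M'$ rather than only its geometric multiplicities.
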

\begin{proof} The eigenvalues of $M\lexp M'$ take two forms: 
$n'\lambda + \rho'$ for $\lam\in\spec(M)$ and $n$ copies of $\lambda'$ for $ \lam'\in\spec(M')$ and $\lam'\ne\rho'$.
Observe that $z=n'\tilde z +\rho'$, so $z$ takes the first form if and only if $\tilde z\in\spec(M)$. 
 The last case in \eqref{eqn_geom_mult_description_1551_10_nov} is thus immediate.   The first two cases in \eqref{eqn_geom_mult_description_1551_10_nov} concern the situation in which   there is no overlap between the values of the two forms.  Consider the structure of the matrix $(C^{-1}\otimes C'^{-1})(M\lexp M')(C\otimes C')$ as given in the proof of Theorem \ref{thm:spectrum_lex_prod_general_matrices}. Then these two cases are  a consequence of Lemma \ref{lem_sylvester_1645_10nov} after a suitable permutation of the rows and columns of $(C^{-1}\otimes C'^{-1})(M\lexp M')(C\otimes C')$.  
 
 The remaining two cases happen when  $z\in\spec(M')\setminus\{\rho'\}$ and $\tilde z\in\spec(M)$, so $z =\lam'=n'\lam+\rho'$ for $\lam\in\spec(M)$, $ \lam'\in\spec(M')$, and $\rho'\ne \lam'$. 
Let $V$ be a matrix of generalized eigenvectors for $M'$ corresponding to $\JCF_{M'}$, and define the vector $\ba=[a_i]\in\mathbb{R}^{n'}$ by \vspace{-5pt}
\[
a_i=
\begin{array}{l}
\left\{\begin{aligned}
&0 &\mbox{if }\; V\textbf{e}_i\perp\bone_{n'};\\
&1 &\mbox{if }\; V\textbf{e}_i\not\perp\bone_{n'}.
\end{aligned}
\right.
\end{array}
\vspace{-5pt}\]
We  rescale the columns of $V$ in such a way that $\bone_{n'}^TV=n'\ba^T$. Notice that  this implies  $V\textbf{e}_1=\bone_{n'}$.  Let $\J'=\J_{n'}$.  We claim that  $C'=V-\frac{1}{n'}\J' V+\bone_{n'}\textbf{e}_1^T$  satisfies the requirements for $C'$ in the proof of Theorem \ref{thm:spectrum_lex_prod_general_matrices}.  Furthermore, we claim  the first row of $C'^{-1}M' C'$  is  $\rho'\textbf{e}_1^T+\ba^T\JCF_{M'} -\rho'\ba^T$. For convenience, we define $\hat\bx=\JCF_{M'}^T\ba-\rho'\ba$. Observe that the first entry of $\hat \bx$ is zero, since $\hat\bx^T\textbf{e}_1=\ba^T\JCF_{M'}\textbf{e}_1
-\rho'\ba^T\textbf{e}_1=\rho'\ba^T\textbf{e}_1-\rho'\ba^T\textbf{e}_1=0$. Therefore, $\hat\bx^T=[0\ \bx'^T]$ in the notation of  the proof of Theorem \ref{thm:spectrum_lex_prod_general_matrices}. 
First, we show that $C'$ is invertible: \vspace{-5pt}
\[
C'\textbf{e}_1=V\textbf{e}_1-\frac{1}{n'}\J' V\textbf{e}_1+\bone_{n'}\textbf{e}_1^T\textbf{e}_1=\bone_{n'}
-\bone_{n'}+\bone_{n'}=\bone_{n'}=V\textbf{e}_1
\vspace{-5pt}
\] 
and, for $i=2,\dots,n'$, \vspace{-5pt}
\[
C'\textbf{e}_i=V\textbf{e}_i-\frac{1}{n'}\J' V\textbf{e}_i+\bone_{n'}\textbf{e}_1^T\textbf{e}_i=
V\textbf{e}_i-\frac{1}{n'}\bone_{n'}n'\ba^T\textbf{e}_i=
V\textbf{e}_i-a_iV\textbf{e}_1,
\vspace{-5pt}
\]
so that $C'\textbf{e}_i$ is obtained from $V\textbf{e}_i$ by adding a scalar multiple of $V\textbf{e}_1$. Hence, $\det(C')=\det(V)\neq 0$. 

Moreover,
\[
\begin{aligned}
C'(n'\textbf{e}_1\textbf{e}_1^T)&=n'V\textbf{e}_1\textbf{e}_1^T-\J' V\textbf{e}_1\textbf{e}_1^T+n'\bone_{n'}\textbf{e}_1^T\textbf{e}_1\textbf{e}_1^T=
n'\bone_{n'}\textbf{e}_1^T-n'\bone_{n'}\textbf{e}_1^T+n'\bone_{n'}\textbf{e}_1^T\\
&=n'\bone_{n'}\textbf{e}_1^T=\J' V-\J' V+n'\bone_{n'}\textbf{e}_{1}^T=\J' V-\frac{1}{n'}\J'^2V+\J'\bone_{n'}\textbf{e}_1^T=\J' C'
\end{aligned}
\]
so $C'^{-1}\J_{n'} C'=\mtx{n' & \boldzero^T \\ \boldzero & O}$. Finally,
\[
\begin{aligned}
M'C'&=M'V-\frac{1}{n'}M'\J'V+M'\bone_{n'}\textbf{e}_1^T=V\JCF_{M'}-\frac{\rho'}{n'}\bone_{n'}\bone_{n'}^TV+\rho'\bone_{n'}\textbf{e}_1^T\\
&=V\JCF_{M'}-\rho'\bone_{n'}\ba^T+\rho'\bone_{n'}\textbf{e}_1^T.\\
C'(\JCF_{M'}+\textbf{e}_1\hat\bx^T)&=\left(V-\frac{1}{n'}\J'V+\bone_{n'}\textbf{e}_1^T\right)(\JCF_{M'}+\textbf{e}_1\hat\bx^T) \\
&=V\JCF_{M'}-\frac{1}{n'}\J'V\JCF_{M'}+\bone_{n'}\textbf{e}_1^T\JCF_{M'}+V\textbf{e}_1\hat\bx^T-\frac{1}{n'}\J'V\textbf{e}_1\hat\bx^T+\bone_{n'}\textbf{e}_1^T\textbf{e}_1\hat\bx^T\\
&=V\JCF_{M'}- \bone_{n'}\ba^T\JCF_{M'}+\rho'\bone_{n'}\textbf{e}_1^T
+\bone_{n'}\hat\bx^T\\
&=M'C'+\rho'\bone_{n'}\ba^T- \bone_{n'}\ba^T\JCF_{M'}+\bone_{n'}\hat\bx^T\\
&=M'C'+\bone_{n'}\left(\rho'\ba^T-\ba^T\JCF_{M'}+\hat\bx^T\right)\\
&=M'C'+\bone_{n'}(-\hat\bx^T+\hat\bx^T)\\
&=M'C'.
\end{aligned}
\]
Therefore, $C'^{-1}M'C'=\JCF_{M'}+\textbf{e}_1\hat\bx^T$, and the claim is true. 

Let us now focus on the entries of $\hat\bx = [\hat x_i]$. We have already noticed that $\hat{x}_1=0$. Furthermore, for $i=2,\dots,n'$, we have that $\hat{x}_{i}=\hat\bx^{T}\textbf{e}_i=\ba^T\JCF_{M'}\textbf{e}_i-
\rho'\ba^T\textbf{e}_i=\lambda' a_i+\delta_i a_{i-1}-\rho'a_i$, where 
$\lambda'=(\JCF_{M'})_{ii}$, $\delta_i=0$ if $V\textbf{e}_i$ is an eigenvector of $M'$, and $\delta_i=1$ otherwise. Suppose now that $ES_{M'}(\lambda')\perp \bone_{n'}$. Then, whenever $\delta_i=0$ with $\lambda'=(\JCF_{M'})_{ii}$, $V\textbf{e}_i\perp \bone_{n'}$, so  $a_i=0$ and $\hat{x}_i=0$. On the other hand, if $ES_{M'}(\lambda')\not\perp \bone_{n'}$, we can find some $i$ such that $\lambda'=(\JCF_{M'})_{ii}$, $\delta_i=0$, and $a_i=1$, which means that $\hat{x}_i=\lambda'-\rho'\neq 0$.

 Take $z\in\mathbb{C}$ and suppose that $z\in\spec(M')\setminus\{\rho'\}$ and $\tilde z\in\spec(M)$. Define $u=\mult_M(\tilde z)$ and $ u'=\mult_{M'}(z)$. We can permute the rows and columns of $(C^{-1}\otimes C'^{-1})(M\lexp M')(C\otimes C')$ in such a way that all the appearances of $z$ on the diagonal are grouped together in a square block $B$. By virtue of Lemma \ref{lem_sylvester_1645_10nov}, the Jordan blocks relative to the eigenvalue $z$ only depend on $B$. We observe that $B$ has order $t=nu'+u$. Hence, $\gmult_{M\lexp M'}(z)=t-\rank(B-z\I_t)$. If $ES_{M'}(z)\perp \bone_{n'}$, from the discussion above we see that the entries in $\hat\bx$ do not influence the rank of $B-z\I_t$, since they can be reduced to zero by subtracting suitable rows of $B-z\I_t$. As a consequence, $\rank(B-z\I_t)=n(u'-g')+u-g$ and hence,
\[
\gmult_{M\lexp M'}(z)=nu'+u-nu'+ng'-u+g=ng'+g.
\]
If $ES_{M'}(z)\not\perp \bone_{n'}$, on the other hand, again using the discussion above we see that $\rank(B-z\I_t)=n(u'-g')+u$. Indeed, in this case, there exists $i\in\{2,\dots,n'\}$ such that $z=(\JCF_{M'})_{ii}$, $\delta_i=0$, and $\hat{x}_i\neq 0$. Therefore, every row of $B-z\I_t$ containing $\hat{\textbf{x}}^T$ is linearly independent from the remaining rows of $B-z\I_t$, and, thus, it increases the rank by $1$. This yields  
\[
\gmult_{M\lexp M'}(z)=nu'+u-nu'+ng'-u=ng'.
\]
\end{proof}

\begin{example}{\rm 
We now test Theorem \ref{thm_geometric_mult_lex_product_matrices} on the matrices $M$ and $M'$ defined in Example \ref{example_lexp_non_diag}. As predicted by Theorem \ref{thm:spectrum_lex_prod_general_matrices} we have that 
\[
\spec(M\lexp M')=\{58-\sqrt{7},(2+\sqrt{7})^{(3)},(2-\sqrt{7})^{(2)}\}.
\]
\begin{itemize}
\item
If $z=58-\sqrt{7}$ then $\tilde z=\frac{1}{3}(28-\sqrt{7})$. This corresponds to the first case of \eqref{eqn_geom_mult_description_1551_10_nov}, and hence we obtain $\gmult_{M\lexp M'}(58-\sqrt{7})=\gmult_M(\frac{1}{3}(28-\sqrt{7}))=1$.
\item
If $z=2-\sqrt{7}$ then $\tilde z=\frac{1}{3}(-28-\sqrt{7})$. This corresponds to the second case of \eqref{eqn_geom_mult_description_1551_10_nov}, and hence we obtain $\gmult_{M\lexp M'}(2-\sqrt{7})=n\,\gmult_{M'}(2-\sqrt{7})=2$.
\item
If $z=2+\sqrt{7}$ then $\tilde z=-\frac{1}{3}(28-\sqrt{7})$. Moreover, we find that $ES_{M'}(2+\sqrt{7})=\operatorname{span}(\textbf{v})$ with $\textbf{v}^T=\begin{bmatrix}
\frac{24-4\sqrt{7}}{-25+7\sqrt{7}}
&
\frac{30-28\sqrt{7}}{-201+45\sqrt{7}}
&
1
\end{bmatrix}
$.
Since $\textbf{v}^T\bone_3=\frac{13-\sqrt{7}}{-75+21\sqrt{7}}\neq 0$, we see that $ES_{M'}(2+\sqrt{7})\not\perp\bone_3$, so that this  corresponds to the fourth case of \eqref{eqn_geom_mult_description_1551_10_nov}, and hence we obtain $\gmult_{M\lexp M'}(2+\sqrt{7})=n\,\gmult_{M'}(2+\sqrt{7})=2$.
\end{itemize}
Notice that $\gmult_{M\lexp M'}(2+\sqrt{7})<\mult_{M\lexp M'}(2+\sqrt{7})$, which implies that $M\lexp M'$ is not diagonalizable (as computed in Example \ref{example_lexp_non_diag}).
}\end{example}

We can apply Theorem \ref{thm:spectrum_lex_prod_general_matrices} and Theorem \ref{thm_geometric_mult_lex_product_matrices} to derive results on the adjacency spectra of lexicographic products of digraphs, and for the Laplacian and signless Laplacian spectra of lexicographic products of digraphs with additional conditions. The first part of the following result was proved in \cite{EH80} for the case where $\dig'$ is a regular digraph (all row and column sums of its adjacency matrix are equal), and is an extension of results known to hold for graphs (see. e.g., \cite{BKPS}).

\begin{corollary}\label{thm:TRlexprod-dig_new_new} Let $\dig$ and $\dig'$ be strongly connected digraphs of orders $n$ and $n'$, respectively,  such that 
 $\dig'$ is $r'$-out-regular.
Let $\spec_{\A}(\dig)=(\alpha_1,\alpha_2,\dots,\alpha_n)$ and $\spec_{\A}(\dig')=(r',\alpha'_2,\dots,\alpha'_{n'})$.  
Then,
\[
\spec_{\A}(\dig{\lexp}\dig')=\left\{n'\alpha_i + r', \ i = 1, \dots, n \right\} \cup \left\{{\alpha'_j}^{(n)}, \ j = 2, \dots, n' \right\}.
\]
Given $z\in \mathbb{C}$, define $\tilde z=\frac{z-r'}{n'}$,  
$g=\gmult_{\A(\dig)}(\tilde z)$, and 
$g'=\gmult_{\A(\dig')}(z)$. Then

\[
\gmult_{\A(\dig\lexp\dig')}(z)=
\begin{array}{l}
\left\{\begin{aligned}
&g &\mbox{if }\;z\not\in\spec_{\A}(\dig')\setminus\{r'\},\;\tilde z\in\spec_{\A}(\dig);\\
&ng' &\mbox{if }\; z\in\spec_{\A}(\dig')\setminus\{r'\},\;\tilde z\not\in\spec_{\A}(\dig);\\
&ng'+g &\mbox{if }\;z\in\spec_{\A}(\dig')\setminus\{r'\},\;\tilde z\in\spec_{\A}(\dig),\;ES_{\A(\dig')}(z)\perp \bone_{n'} ;\\
&ng' &\mbox{if }\;z\in\spec_{\A}(\dig')\setminus\{r'\},\;\tilde z\in\spec_{\A}(\dig),\;ES_{\A(\dig')}(z)\not\perp \bone_{n'} ;\\
&0 &\mbox{otherwise}.
\end{aligned}\right.
\end{array}
\]
\end{corollary}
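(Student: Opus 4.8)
The plan is to recognize that this corollary is nothing more than the specialization of Theorem~\ref{thm:spectrum_lex_prod_general_matrices} and Theorem~\ref{thm_geometric_mult_lex_product_matrices} to the adjacency matrices $M=\A(\dig)$ and $M'=\A(\dig')$, so the entire task reduces to verifying that these two matrices satisfy the hypotheses of those theorems and that the adjacency matrix of the lexicographic product is exactly the construction $M\lexp M'$. First I would record that $\A(\dig\lexp\dig')=\A(\dig)\lexp\A(\dig')$; this identity is already asserted in Section~\ref{s:mtx-prod}, so it may be cited directly. Next I would check the hypotheses: since $\dig$ and $\dig'$ are strongly connected, their adjacency matrices are irreducible, and being adjacency matrices they are nonnegative. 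The $r'$-out-regularity of $\dig'$ is precisely the statement that every row sum of $\A(\dig')$ equals $r'$, i.e.\ $\A(\dig')\bone_{n'}=r'\bone_{n'}$, so $\rho'=r'$ in the notation of the two theorems. Note that no regularity assumption on $\dig$ is needed, matching the hypotheses of Theorem~\ref{thm:spectrum_lex_prod_general_matrices}.

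With the hypotheses in place, the spectrum formula is an immediate substitution into Theorem~\ref{thm:spectrum_lex_prod_general_matrices}, identifying $\spec(M)=\spec_{\A}(\dig)$, $\spec(M')=\spec_{\A}(\dig')$, $\rho'=r'$, and renaming eigenvalues $\lambda_i\mapsto\alpha_i$, $\lambda'_j\mapsto\alpha'_j$. (One should keep the spectral radius of $M$ flexibly placed: Theorem~\ref{thm:spectrum_lex_prod_general_matrices} lists $\rho(M)=\lambda_1$ first, but since the adjacency eigenvalues of $\dig$ are simply labelled $\alpha_1,\dots,\alpha_n$ without distinguishing the spectral radius, the formula $\{n'\alpha_i+r':i=1,\dots,n\}\cup\{\alpha_j'^{(n)}:j=2,\dots,n'\}$ follows verbatim.) The geometric-multiplicity formula follows the same way from Theorem~\ref{thm_geometric_mult_lex_product_matrices}, with $z$, $\tilde z=\frac{z-r'}{n'}$, $g=\gmult_{\A(\dig)}(\tilde z)$, and $g'=\gmult_{\A(\dig')}(z)$ translated directly into the adjacency setting; the orthogonality condition $ES_{M'}(z)\perp\bone_{n'}$ becomes $ES_{\A(\dig')}(z)\perp\bone_{n'}$.

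The only genuine content beyond citation is the justification that $\A(\dig)$ is irreducible, which I would dispatch by the standard fact that a nonnegative matrix is irreducible if and only if its associated digraph is strongly connected — exactly the given hypothesis. I expect \emph{no} serious obstacle here: the corollary is a direct consequence, and the main thing to be careful about is the bookkeeping of which matrix plays the role of $M$ versus $M'$ (the product $\lexp$ is not symmetric in its two arguments, and the out-regularity hypothesis is imposed on $\dig'$, the second factor, matching the requirement $M'\bone_{n'}=\rho'\bone_{n'}$ in both theorems). I would therefore write the proof in a single short paragraph: state that $\A(\dig\lexp\dig')=\A(\dig)\lexp\A(\dig')$, verify that $\A(\dig)$ and $\A(\dig')$ are irreducible nonnegative with $\A(\dig')\bone_{n'}=r'\bone_{n'}$, and conclude that the two displayed formulas are the respective conclusions of Theorem~\ref{thm:spectrum_lex_prod_general_matrices} and Theorem~\ref{thm_geometric_mult_lex_product_matrices}.
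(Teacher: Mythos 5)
Your proposal is correct and matches the paper's route exactly: the paper offers no separate proof of this corollary, deriving it (as the preceding text indicates) directly from Theorem~\ref{thm:spectrum_lex_prod_general_matrices} and Theorem~\ref{thm_geometric_mult_lex_product_matrices} via the identity $\A(\dig\lexp\dig')=\A(\dig)\lexp\A(\dig')$, with irreducibility supplied by strong connectivity and $\A(\dig')\bone_{n'}=r'\bone_{n'}$ by out-regularity. Your attention to the asymmetry of $\lexp$ and to the labelling of the spectral radius of $\A(\dig)$ covers the only points requiring care.
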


If both $\dig$ and $\dig'$ are out-regular, then $\dig\lexp\dig'$ is out-regular, too. As a consequence, both the spectrum of the Laplacian matrix and of the signless Laplacian matrix of $\dig\lexp\dig'$ are obtained via shifting the spectrum of its adjacency matrix. Corollary \ref{cor_spectrum_laplacian_lexp} and Corollary \ref{cor_spectrum_SIGNLESS_laplacian_lexp} are then derived from Corollary \ref{thm:TRlexprod-dig_new_new} using basic algebraic manipulations.

\begin{corollary}\label{cor_spectrum_laplacian_lexp} Let $\dig$ and $\dig'$ be strongly connected digraphs of orders $n$ and $n'$, respectively, such that $\dig$ is $r$-out-regular and $\dig'$ is $r'$-out-regular. Let $\spec_L(\dig)=(0,\alpha^L_2,\dots,\alpha^L_n)$ and $\spec_L(\dig')=(0,\alpha^{L'}_2,\dots,\alpha^{L'}_{n'})$.  
Then $\dig\lexp\dig'$ is $(rn'+r')$-out-regular and 
\[
\spec_L(\dig{\lexp}\dig')=\{0\}\cup\{n'\alpha^L_i, \ i=2,\dots,n\}\cup \{(\alpha^{L'}_{j}+rn')^{(n)}, \ j=2,\dots,n'\}.           
\]
Given $z\in \mathbb{C}$, define $\tilde z=\frac{z}{n'}$, $\hat z=z-rn'$,  
$g=\gmult_{L(\dig)}(\tilde z)$, 
and $g'=\gmult_{L(\dig')}(\hat z)$. Then
\[
\gmult_{L(\dig\lexp\dig')}(z)=
\begin{array}{l}
\left\{\begin{aligned}
&g &\mbox{if }\;\hat z\not\in\spec_{L}(\dig')\setminus\{0\},\;\tilde z\in\spec_{L}(\dig);\\
&ng' &\mbox{if }\; \hat z\in\spec_{L}(\dig')\setminus\{0\},\;\tilde z\not\in\spec_{L}(\dig);\\
&ng'+g &\mbox{if }\;\hat z\in\spec_{L}(\dig')\setminus\{0\},\;\tilde z\in\spec_{L}(\dig),\;ES_{L(\dig')}(\hat z)\perp \bone_{n'} ;\\
&ng' &\mbox{if }\;\hat z\in\spec_{L}(\dig')\setminus\{0\},\;\tilde z\in\spec_{L}(\dig),\;ES_{L(\dig')}(\hat z)\not\perp \bone_{n'} ;\\
&0 &\mbox{otherwise}.
\end{aligned}\right.
\end{array}
\]
\end{corollary}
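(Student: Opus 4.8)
The plan is to derive everything from the adjacency results already established in Corollary \ref{thm:TRlexprod-dig_new_new} by exploiting the fact that, under the out-regularity hypotheses, every Laplacian involved is an affine shift of the corresponding adjacency matrix. First I would verify the out-regularity claim: a vertex $(x,x')$ of $\dig\lexp\dig'$ has $rn'$ out-arcs arising from the $r$ out-neighbors of $x$ in $\dig$ (each paired with all $n'$ second coordinates) together with $r'$ out-arcs arising from the $r'$ out-neighbors of $x'$ in $\dig'$, giving out-degree $rn'+r'$. Since $\dig$ is $r$-out-regular and $\dig'$ is $r'$-out-regular, $D(\dig)=r\I_n$, $D(\dig')=r'\I_{n'}$, and $D(\dig\lexp\dig')=(rn'+r')\I_{nn'}$, so
\[
L(\dig)=r\I_n-\A(\dig),\quad L(\dig')=r'\I_{n'}-\A(\dig'),\quad L(\dig\lexp\dig')=(rn'+r')\I_{nn'}-\A(\dig\lexp\dig').
\]
In particular, the whole problem reduces to applying the substitution $z\mapsto (rn'+r')-z$ to the adjacency data.

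For the spectrum, I would apply the first part of Corollary \ref{thm:TRlexprod-dig_new_new} and then negate-and-shift. Because $\dig$ and $\dig'$ are strongly connected and out-regular, their Perron roots are $\alpha_1=r$ and $\alpha'_1=r'$, which correspond to the Laplacian eigenvalue $0$ via the shifts above; thus $\alpha^L_i=r-\alpha_i$ and $\alpha^{L'}_j=r'-\alpha'_j$ for $i,j\geq 2$. Applying $(rn'+r')-(\cdot)$ to each adjacency eigenvalue: the value $n'\alpha_i+r'$ becomes $n'(r-\alpha_i)$, which is $0$ for $i=1$ and $n'\alpha^L_i$ for $i\geq 2$; the value $\alpha'_j$ (with multiplicity $n$) becomes $rn'+(r'-\alpha'_j)=\alpha^{L'}_j+rn'$. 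This yields exactly the claimed formula for $\spec_L(\dig\lexp\dig')$.

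For the geometric multiplicities, the key fact is that for any matrix $A$ and scalar $c$, the map $\mu\mapsto c-\mu$ is a bijection between $\spec(A)$ and $\spec(c\I-A)$ that preserves eigenspaces: $ES_{c\I-A}(c-\mu)=ES_A(\mu)$, hence also geometric multiplicities and orthogonality relations. Applying this with $c=rn'+r'$ and $A=\A(\dig\lexp\dig')$ gives $\gmult_{L(\dig\lexp\dig')}(z)=\gmult_{\A(\dig\lexp\dig')}(w)$ where $w=(rn'+r')-z$, so I can feed $w$ into the multiplicity formula of Corollary \ref{thm:TRlexprod-dig_new_new}. It then remains to check that the auxiliary quantities match up. Writing $\tilde w=\frac{w-r'}{n'}=r-\frac{z}{n'}=r-\tilde z$, the eigenspace-preserving shift on $\dig$ gives $\gmult_{\A(\dig)}(\tilde w)=\gmult_{L(\dig)}(\tilde z)=g$; and since $r'-w=z-rn'=\hat z$, the shift on $\dig'$ gives $\gmult_{\A(\dig')}(w)=\gmult_{L(\dig')}(\hat z)=g'$ with $ES_{\A(\dig')}(w)=ES_{L(\dig')}(\hat z)$. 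The membership conditions translate the same way: $w\in\spec_\A(\dig')\setminus\{r'\}\iff\hat z\in\spec_L(\dig')\setminus\{0\}$ and $\tilde w\in\spec_\A(\dig)\iff\tilde z\in\spec_L(\dig)$, while the orthogonality condition $ES_{\A(\dig')}(w)\perp\bone_{n'}$ is literally the same as $ES_{L(\dig')}(\hat z)\perp\bone_{n'}$ because the two eigenspaces coincide. Substituting these equalities case-by-case into the adjacency formula produces the stated Laplacian formula.

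I expect no genuine obstacle here; the result is, as the text indicates, a matter of basic algebraic manipulation. The only point requiring care is the bookkeeping of the three simultaneous shifts (on $\dig$, on $\dig'$, and on the product) and confirming that each of the five cases, together with the $\perp\bone_{n'}$ conditions and the defining formulas for $\tilde z$, $\hat z$, $g$, and $g'$, maps correctly under $z\mapsto(rn'+r')-z$. The eigenspace-preservation of scalar shifts is what guarantees the orthogonality hypotheses survive intact, so that the third and fourth cases are not conflated.
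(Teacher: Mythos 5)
Your proposal is correct and takes essentially the same approach as the paper, which likewise notes that out-regularity makes $L(\dig\lexp\dig')=(rn'+r')\I_{nn'}-\A(\dig\lexp\dig')$ an affine shift of the adjacency matrix and derives the corollary from Corollary \ref{thm:TRlexprod-dig_new_new} by exactly these substitutions. Your explicit check that scalar shifts preserve eigenspaces (so the $\perp\bone_{n'}$ conditions carry over and cases three and four are not conflated) simply spells out what the paper leaves as ``basic algebraic manipulations.''
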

\begin{corollary}\label{cor_spectrum_SIGNLESS_laplacian_lexp} Let $\dig$ and $\dig'$ be strongly connected digraphs of orders $n$ and $n'$, respectively, such that $\dig$ is $r$-out-regular and $\dig'$ is $r'$-out-regular. Let $\spec_Q(\dig)=(\alpha^Q_1,\alpha^Q_2,\dots,\alpha^Q_n)$ and $\spec_Q(\dig')=(\alpha^{Q'}_1,\alpha^{Q'}_2,\dots,\alpha^{Q'}_{n'})$. Then
\[
\spec_Q(\dig{\lexp}\dig')=\{n'\alpha^Q_i+2r', \ i=1,\dots,n\}\cup\{(\alpha^{Q'}_j+rn')^{(n)}, \ j=2,\dots,n'\}.           
\]
Given $z\in \mathbb{C}$, define $\tilde z=\frac{z-2r'}{n'}$, $\hat z=z-rn'$,  
$g=\gmult_{Q(\dig)}(\tilde z)$, 
and $g'=\gmult_{Q(\dig')}(\hat z)$. Then
\[
\gmult_{Q(\dig\lexp\dig')}(z)=
\begin{array}{l}
\left\{\begin{aligned}
&g &\mbox{if }\;\hat z\not\in\spec_{Q}(\dig')\setminus\{2r'\},\;\tilde z\in\spec_{Q}(\dig);\\
&ng' &\mbox{if }\; \hat z\in\spec_{Q}(\dig')\setminus\{2r'\},\;\tilde z\not\in\spec_{Q}(\dig);\\
&ng'+g &\mbox{if }\;\hat z\in\spec_{Q}(\dig')\setminus\{2r'\},\;\tilde z\in\spec_{Q}(\dig),\;ES_{Q(\dig')}(\hat z)\perp \bone_{n'} ;\\
&ng' &\mbox{if }\;\hat z\in\spec_{Q}(\dig')\setminus\{2r'\},\;\tilde z\in\spec_{Q}(\dig),\;ES_{Q(\dig')}(\hat z)\not\perp \bone_{n'} ;\\
&0 &\mbox{otherwise}.
\end{aligned}\right.
\end{array}
\]
\end{corollary}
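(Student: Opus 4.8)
The plan is to exploit the out-regularity of both factors so as to reduce the entire statement to a scalar shift of the adjacency-matrix result in Corollary~\ref{thm:TRlexprod-dig_new_new}. First I would verify that $\dig\lexp\dig'$ is $(rn'+r')$-out-regular: reading off the arc set $E(\dig\lexp\dig')$, a vertex $(x,x')$ has out-neighbors $(y,y')$ with $(x,y)\in E(\dig)$ and $y'$ arbitrary, contributing $rn'$ arcs since $\dig$ is $r$-out-regular, together with the vertices $(x,y')$ where $(x',y')\in E(\dig')$, contributing $r'$ arcs since $\dig'$ is $r'$-out-regular. Hence $D(\dig\lexp\dig')=(rn'+r')\I_{nn'}$ and $Q(\dig\lexp\dig')=(rn'+r')\I_{nn'}+\A(\dig\lexp\dig')$. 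The crucial observation is that adding a scalar multiple of the identity shifts every eigenvalue by that scalar while leaving the eigenvectors, generalized eigenvectors, and Jordan structure untouched; consequently it preserves all eigenspaces as well as both algebraic and geometric multiplicities.

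Next I would record the analogous shifts on the factors. Since $\dig$ and $\dig'$ are out-regular, $Q(\dig)=r\I_n+\A(\dig)$ and $Q(\dig')=r'\I_{n'}+\A(\dig')$, so $\alpha_i=\alpha^Q_i-r$ and $\alpha'_j=\alpha^{Q'}_j-r'$; in particular the Perron eigenvalue $\alpha'_1=r'$ becomes $\alpha^{Q'}_1=2r'$, which accounts for the value $2r'$ being the excluded one in the statement. Feeding $\spec_{\A}(\dig)$ and $\spec_{\A}(\dig')$ into Corollary~\ref{thm:TRlexprod-dig_new_new} and then shifting the resulting adjacency spectrum of $\dig\lexp\dig'$ by $rn'+r'$ gives the spectrum: a first-type eigenvalue $n'\alpha_i+r'$ becomes $(rn'+r')+n'\alpha_i+r'=n'\alpha^Q_i+2r'$, and a second-type eigenvalue $\alpha'_j$ becomes $(rn'+r')+\alpha'_j=\alpha^{Q'}_j+rn'$, matching the claimed formula.

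For the geometric multiplicities I would set $w=z-(rn'+r')$, so that $\gmult_{Q(\dig\lexp\dig')}(z)=\gmult_{\A(\dig\lexp\dig')}(w)$ by shift invariance, and then check that each ingredient of the adjacency formula translates into its signless-Laplacian counterpart. With $\tilde z=\frac{z-2r'}{n'}$ and $\hat z=z-rn'$ as in the statement, a short computation gives $\frac{w-r'}{n'}=\tilde z-r$ and $w=\hat z-r'$; using $Q(\dig)=r\I_n+\A(\dig)$ and $Q(\dig')=r'\I_{n'}+\A(\dig')$ one then obtains $\gmult_{\A(\dig)}\!\big(\tfrac{w-r'}{n'}\big)=\gmult_{Q(\dig)}(\tilde z)=g$ and $\gmult_{\A(\dig')}(w)=\gmult_{Q(\dig')}(\hat z)=g'$. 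The membership conditions match as well: $w\in\spec_{\A}(\dig')\setminus\{r'\}$ is equivalent to $\hat z\in\spec_Q(\dig')\setminus\{2r'\}$, and $\frac{w-r'}{n'}\in\spec_{\A}(\dig)$ to $\tilde z\in\spec_Q(\dig)$. Finally, since scalar shifts preserve eigenspaces, $ES_{\A(\dig')}(w)=ES_{Q(\dig')}(\hat z)$, so the orthogonality test against $\bone_{n'}$ that distinguishes the third and fourth cases is unchanged. Substituting all of this into the five cases of Corollary~\ref{thm:TRlexprod-dig_new_new} yields exactly the five cases claimed here.

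I expect the only delicate point to be the bookkeeping of the two independent shifts (by $rn'+r'$ on the product and by $r$ and $r'$ on the factors) and verifying that they cancel consistently across all five cases, especially confirming that the eigenspace-orthogonality condition transfers verbatim through the shift; everything else is the routine algebra that the paragraph preceding the corollaries alludes to.
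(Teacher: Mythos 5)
Your proposal is correct and follows essentially the same route as the paper: the authors derive this corollary from Corollary~\ref{thm:TRlexprod-dig_new_new} precisely by observing that $\dig\lexp\dig'$ is $(rn'+r')$-out-regular, so $Q(\dig\lexp\dig')=(rn'+r')\I_{nn'}+\A(\dig\lexp\dig')$ and scalar shifts preserve eigenspaces and geometric multiplicities. Your change of variables ($w=z-(rn'+r')$, with $\tfrac{w-r'}{n'}=\tilde z-r$ and $w=\hat z-r'$) correctly carries out the ``basic algebraic manipulations'' the paper leaves implicit, including the verbatim transfer of the orthogonality test $ES_{\A(\dig')}(w)=ES_{Q(\dig')}(\hat z)$ against $\bone_{n'}$.
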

The lexicographic product $\dig\lexp \dig'$ is strongly connected if and only if $\dig$ is strongly connected \cite{H18}, but $\dig'$ need  not be. If $\dig'$ is not strongly connected, then $d_{\dig'}(x',y')=\infty$ when there is no dipath from $x'$ to $y'$. Due to this subtlety, in this section only we list any requirements for strong connectivity explicitly.  
For a vertex $x$ of a strongly connected digraph $\dig$,  $\xi_\dig(x)$ is the length of a shortest (nontrivial)  dicycle  containing $x$.  If  $\dig$ has at least one dicycle, the minimum length of a dicycle in $\dig$ is called the {\em girth} of  $\dig$, denoted $g(\dig)$.

\begin{proposition}\label{distlex} {\rm \cite{H18}} 
 If $\dig, \dig'$ are digraphs such that $\dig$ is strongly connected, the distance formula for the lexicographic product $\dig \lexp \dig'$ is
\[
d_{\dig \lexp \dig'}((x,x'),(y,y')) = \left\{ \begin{array}{lr} d_{\dig}(x,y) &  if \ x \ne y\\
\min \{\xi_\dig(x), \ d_{\dig'}(x',y')\} & if \ x = y.
\end{array} \right.
\]
\end{proposition}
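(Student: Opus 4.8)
The plan is to prove the distance formula by analyzing the structure of dipaths in $\dig \lexp \dig'$ directly from the definition of the arc set $E(\dig \lexp \dig')$, splitting into the two cases $x \neq y$ and $x = y$. First I would record the key observation that the projection onto the first coordinate maps any arc of $\dig\lexp\dig'$ either to an arc of $\dig$ (when the first coordinates differ) or to a trivial step staying at the same vertex of $\dig$ (when $x = y$ and $(x',y')\in E(\dig')$). Consequently, if $((x_0,x_0'),\dots,(x_m,x_m'))$ is any dipath in $\dig\lexp\dig'$, then deleting repetitions yields a diwalk in $\dig$ from $x_0$ to $x_m$; this immediately gives the lower bound $d_{\dig\lexp\dig'}((x,x'),(y,y')) \geq d_{\dig}(x,y)$ in all cases.

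For the case $x \neq y$, I would establish the upper bound by lifting a shortest dipath $x = u_0, u_1, \dots, u_\ell = y$ in $\dig$ (where $\ell = d_\dig(x,y)$) to a dipath in the product. The lexicographic arc set is permissive in the first coordinate: whenever $(u_i,u_{i+1})\in E(\dig)$, every pair $((u_i,a),(u_{i+1},b))$ is an arc regardless of the second coordinates $a,b$. Hence I can choose the intermediate second coordinates freely and connect $(x,x')$ to $(y,y')$ in exactly $\ell$ steps, giving $d_{\dig\lexp\dig'}((x,x'),(y,y'))\leq d_\dig(x,y)$. Together with the lower bound this settles the first case.

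The case $x = y$ is the more delicate one and I expect it to be the main obstacle, since here the two quantities $\xi_\dig(x)$ and $d_{\dig'}(x',y')$ compete. A dipath from $(x,x')$ to $(x,y')$ can proceed in two qualitatively different ways: it may stay at $x$ in the first coordinate and move within the copy of $\dig'$, costing $d_{\dig'}(x',y')$ steps (valid precisely because arcs with equal first coordinate require $(x',y')\in E(\dig')$); or it may leave $x$, travel around a dicycle through $x$ in $\dig$, and return, during which the permissiveness of the arc set lets the second coordinate be set arbitrarily, costing $\xi_\dig(x)$ steps. This gives the upper bound $d_{\dig\lexp\dig'}((x,x'),(x,y'))\leq \min\{\xi_\dig(x), d_{\dig'}(x',y')\}$. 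For the matching lower bound I would argue that any dipath from $(x,x')$ to $(x,y')$ either never leaves the fiber over $x$ in its first coordinate, in which case it is a dipath in $\dig'$ from $x'$ to $y'$ of length at least $d_{\dig'}(x',y')$, or it does leave $x$ and must return, in which case its first-coordinate projection contains a nontrivial closed diwalk through $x$, hence has length at least $\xi_\dig(x)$. Taking the minimum over these two possibilities yields the lower bound and completes the proof. The care needed is in handling the case $x=y$ but $x' = y'$ (distance zero) and in confirming that a returning diwalk genuinely forces a dicycle of length at least $\xi_\dig(x)$; I would treat $\xi_\dig(x)=\infty$ when $x$ lies on no dicycle, consistent with the convention that the minimum then reduces to $d_{\dig'}(x',y')$.
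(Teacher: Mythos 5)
The paper does not prove this proposition at all: it is quoted verbatim from Hammack's chapter \cite{H18} on digraph products, so there is no in-paper argument to compare against. Your blind proof is correct and self-contained, and it follows the natural (and essentially the standard) route: the projection-to-first-coordinate lower bound, the lift of a shortest dipath using the permissiveness of the lexicographic arc set when $x\ne y$, and the fiber-versus-excursion dichotomy when $x=y$. The one step you rightly flag as needing care -- that a dipath which leaves the fiber over $x$ and returns costs at least $\xi_\dig(x)$ -- does go through: the first-coordinate projection, after deleting trivial steps, is a closed diwalk through $x$ of positive length, and the usual excision argument (repeatedly removing a segment between a repeated vertex, which never destroys the endpoint $x$ and never empties the walk except in the excluded case $(i,j)=(0,m)$) shows such a walk contains a dicycle through $x$, hence has length at least $\xi_\dig(x)$. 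Two small points of hygiene: in the lift for $x\ne y$ the intermediate second coordinates may be chosen arbitrarily and the resulting vertices are automatically distinct because the first coordinates are; and your convention $\xi_\dig(x)=\infty$ is only needed if one weakens the hypothesis, since under the paper's standing assumption that $\dig$ is strongly connected of order at least two every vertex lies on a dicycle, so $\xi_\dig(x)$ is finite and the cycle route always furnishes a finite upper bound even when $d_{\dig'}(x',y')=\infty$ (the situation the paper explicitly allows for $\dig'$).
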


\begin{obs}\label{obs:lex-longcycle}
 If $\dig$ and $ \dig'$ are strongly connected digraphs such that   $\diam \dig' \leq \ds g(\dig)$,  
 then 
the distance formula in Proposition \ref{distlex} becomes
\[
d_{\dig \lexp \dig'}((x,x'),(y,y')) = \left\{ \begin{array}{lr} d_{\dig}(x,y) &  if \ x \ne y\\
 d_{\dig'}(x',y') & if \ x = y.
\end{array} \right.
\]
In this case, by a suitable ordering of vertices, the distance matrix $\D(\dig \lexp \dig')$ can be written in the form
$\D(\dig \lexp \dig') = \D(\dig) \otimes \J_{n'} + \I_n \otimes \D(\dig')=\D(\dig)\lexp\D(\dig')$.
\end{obs}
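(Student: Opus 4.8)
The plan is to establish Observation \ref{obs:lex-longcycle} in two stages: first derive the simplified distance formula from the hypothesis $\diam \dig' \leq g(\dig)$, and then read off the Kronecker-product form of $\D(\dig \lexp \dig')$ from that formula. For the first stage, I would start from the general distance formula of Proposition \ref{distlex} and observe that the only case requiring attention is $x = y$, where the distance is $\min\{\xi_\dig(x), d_{\dig'}(x',y')\}$. Since $\dig$ is strongly connected, every vertex $x$ lies on some dicycle, so $\xi_\dig(x)$ is finite and satisfies $\xi_\dig(x) \geq g(\dig)$ by definition of girth. On the other hand, $d_{\dig'}(x',y') \leq \diam \dig'$ for all $x', y'$ with $x' \neq y'$ (and $d_{\dig'}(x',x') = 0$), so the hypothesis $\diam \dig' \leq g(\dig) \leq \xi_\dig(x)$ forces $d_{\dig'}(x',y') \leq \xi_\dig(x)$ in every case, whence the minimum equals $d_{\dig'}(x',y')$. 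This collapses the $x=y$ branch to $d_{\dig'}(x',y')$ and yields the stated simplified formula.

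For the second stage, I would fix the ordering of $V(\dig \lexp \dig') = V(\dig) \times V(\dig')$ that groups together all pairs sharing the same first coordinate, i.e., the lexicographic ordering $(x_1,x_1'),\dots,(x_1,x_{n'}'),(x_2,x_1'),\dots$ This is the natural ordering making $\D(\dig \lexp \dig')$ an $n \times n$ block matrix with $n' \times n'$ blocks indexed by pairs of vertices of $\dig$. Using the simplified distance formula, I would identify each block directly: the diagonal block (where $x = y$) has $(x',y')$ entry $d_{\dig'}(x',y')$, so it equals $\D(\dig')$; an off-diagonal block (where $x \neq y$) has every entry equal to the constant $d_{\dig}(x,y)$, so it equals $d_{\dig}(x,y)\,\J_{n'}$. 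Recognizing these two block patterns as precisely the block structure of $\D(\dig)\otimes\J_{n'}$ (which places $d_\dig(x,y)\J_{n'}$ in the $(x,y)$ block, including zero on the diagonal) and $\I_n \otimes \D(\dig')$ (which places $\D(\dig')$ on each diagonal block and $O$ elsewhere), I conclude $\D(\dig \lexp \dig') = \D(\dig)\otimes \J_{n'} + \I_n \otimes \D(\dig')$, which is by definition $\D(\dig)\lexp\D(\dig')$.

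The argument is essentially a bookkeeping verification once the distance formula is simplified, so there is no deep obstacle. The one point requiring care is the inequality chain in the first stage: I must be sure that $\xi_\dig(x) \geq g(\dig)$ holds for \emph{every} vertex $x$ (which follows because the girth is the minimum dicycle length over the whole digraph, hence a lower bound for each individual $\xi_\dig(x)$), and that $\xi_\dig(x)$ is finite (which uses strong connectivity of $\dig$ to guarantee each vertex lies on a dicycle). A secondary subtlety is that the diagonal entries must match in both stages: when $x = y$ and $x' = y'$ we need $d = 0$, which is consistent since $d_{\dig'}(x',x') = 0$ and the diagonal block of $\D(\dig)\otimes\J_{n'}$ contributes $d_\dig(x,x)\J_{n'} = O$. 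Both of these are routine to check, so I expect the main content to lie in presenting the block-matrix identification cleanly rather than in overcoming any genuine difficulty.
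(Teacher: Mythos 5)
Your proposal is correct and takes essentially the same approach the paper intends: the paper states this result as an observation with no written proof, treating it as immediate from Proposition \ref{distlex}, and your argument---collapsing the minimum in the $x=y$ case via the chain $d_{\dig'}(x',y')\le\diam\dig'\le g(\dig)\le\xi_\dig(x)$ and then identifying the diagonal blocks with $\D(\dig')$ and the off-diagonal blocks with $d_\dig(x,y)\J_{n'}$ under the lexicographic vertex ordering---is precisely the implicit justification. The side points you flag (finiteness of $\xi_\dig(x)$ from strong connectivity, and the zero diagonal blocks of $\D(\dig)\otimes\J_{n'}$ making the $x=y$, $x'=y'$ entries consistent) are handled correctly.
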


The {\em complement} of a digraph $\dig=(V,E)$ is the digraph $\OL\dig=(V,\OL E)$ where $\OL E$ consists of all  arcs not in $\dig$. \vspace{-5pt}
\begin{obs}\label{obs:lex-doubly-directed}
 If $\dig$ and $ \dig'$ are digraphs such that $\dig$ is strongly connected and every vertex is incident with a doubly directed arc, then $\xi_\dig(x) = 2$ for any vertex $x$ of $\dig$. In this case, by a suitable ordering of vertices, the distance matrix $\D(\dig \lexp \dig')$ can be written in the form
$\D(\dig \lexp \dig') = \D(\dig) \otimes \J_{n'} + \I_n \otimes (\A(\dig') + 2\A(\overline{\dig'}))=\D(\dig)\lexp (\A(\dig') + 2\A(\overline{\dig'}))$
as derived in \cite{I2009} for graphs.

\end{obs}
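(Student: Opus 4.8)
The plan is to first verify the claim $\xi_\dig(x)=2$ and then feed the resulting simplified distance formula through Proposition \ref{distlex}, matching the within-fiber distances against the entries of $\A(\dig')+2\A(\OL{\dig'})$. For the first part, since every vertex $x$ of $\dig$ is incident with a doubly directed arc, there is a vertex $y$ with both $(x,y),(y,x)\in E(\dig)$, so $x\to y\to x$ is a dicycle of length $2$ through $x$. As digraphs have no loops, no nontrivial dicycle has length $1$, and hence the shortest dicycle through $x$ has length exactly $2$; that is, $\xi_\dig(x)=2$.

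For the second part I would substitute $\xi_\dig(x)=2$ into the formula of Proposition \ref{distlex} (applicable since $\dig$ is strongly connected). When $x\neq y$ the distance $d_{\dig\lexp\dig'}((x,x'),(y,y'))=d_\dig(x,y)$ is independent of $x'$ and $y'$. When $x=y$ the distance becomes $\min\{2,d_{\dig'}(x',y')\}$, and I would split into three cases: $x'=y'$ gives $0$; $(x',y')\in E(\dig')$ gives $d_{\dig'}(x',y')=1$, hence the min is $1$; and $x'\neq y'$ with $(x',y')\notin E(\dig')$ gives $d_{\dig'}(x',y')\geq 2$, hence the min is $2$. These three values are precisely the $(x',y')$ entries of $\A(\dig')+2\A(\OL{\dig'})$: its diagonal is zero, an off-diagonal entry equals $1$ exactly when $(x',y')$ is an arc of $\dig'$, and equals $2$ otherwise (when $(x',y')$ is an arc of $\OL{\dig'}$).

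Finally, I would order the vertices of $\dig\lexp\dig'$ so that the $n'$ vertices of each fiber $\{x\}\times V(\dig')$ are consecutive, presenting $\D(\dig\lexp\dig')$ as an $n\times n$ array of $n'\times n'$ blocks indexed by pairs $(x,y)$. By the case analysis, the off-diagonal block ($x\neq y$) is the constant matrix $d_\dig(x,y)\J_{n'}$, which is the $(x,y)$ block of $\D(\dig)\otimes\J_{n'}$; the diagonal block ($x=y$) is $\A(\dig')+2\A(\OL{\dig'})$, matching the $(x,x)$ block of $\I_n\otimes(\A(\dig')+2\A(\OL{\dig'}))$. Since the diagonal of $\D(\dig)$ is zero, the term $\D(\dig)\otimes\J_{n'}$ contributes nothing on the diagonal blocks, so summing the two contributions yields $\D(\dig\lexp\dig')=\D(\dig)\otimes\J_{n'}+\I_n\otimes(\A(\dig')+2\A(\OL{\dig'}))$, which by the definition of the construction $\lexp$ in Section \ref{s:mtx-prod} equals $\D(\dig)\lexp(\A(\dig')+2\A(\OL{\dig'}))$.

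The only step requiring care is the third case above: I must confirm $\min\{2,d_{\dig'}(x',y')\}=2$ holds uniformly when $x'\neq y'$ and $(x',y')\notin E(\dig')$, \emph{including} the situation where $\dig'$ is not strongly connected and $d_{\dig'}(x',y')=\infty$, and that this value coincides with the complement entry $\A(\OL{\dig'})_{x'y'}$ contributing $2$. Everything else is routine bookkeeping on the block structure, so I do not expect a genuine obstacle.
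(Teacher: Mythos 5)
Your proof is correct and takes essentially the same approach the paper intends: the statement is given as an Observation with no written proof, its justification being exactly your substitution of $\xi_\dig(x)=2$ into Proposition \ref{distlex} followed by the case analysis $\min\{2,d_{\dig'}(x',y')\}\in\{0,1,2\}$ according as $x'=y'$, $(x',y')\in E(\dig')$, or neither, matched blockwise against $\D(\dig)\otimes\J_{n'}+\I_n\otimes(\A(\dig')+2\A(\overline{\dig'}))$. Your explicit check of the case $d_{\dig'}(x',y')=\infty$ when $\dig'$ is not strongly connected is a point the paper only mentions in passing, and handling it correctly (the minimum is still $2$) is exactly right.
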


We can apply Theorem \ref{thm:spectrum_lex_prod_general_matrices} and Theorem \ref{thm_geometric_mult_lex_product_matrices} to provide results on the distance spectra of lexicographic products of digraphs which satisfy certain hypotheses. 
The next result is an  immediate consequence of Observation \ref{obs:lex-longcycle}, Theorem \ref{thm:spectrum_lex_prod_general_matrices}, and Theorem \ref{thm_geometric_mult_lex_product_matrices}.

\begin{corollary}\label{cor_spectrum_distance_lexp_long_cycle} Let $\dig$ and $\dig'$ be strongly connected digraphs of orders $n$ and $n'$, respectively,  such that  $\dig'$ is $t'$-transmission regular, and $\diam \dig' \leq \ds  g(\dig)$.
Let $\spec_{\D}(\dig)=(\dev_1,\dev_2,\dots,\dev_n)$ and $\spec_{\D}(\dig')=(t',\dev'_2,\dots,\dev'_{n'})$.  
Then
\[
\spec_{\D}(\dig{\lexp}\dig')=\left\{n'\dev_i + t', \ i = 1, \dots, n \right\} \cup \left\{{\dev'_j}^{(n)}, \ j = 2, \dots, n' \right\}.
\]
Given $z\in \mathbb{C}$, define $\tilde z=\frac{z-t'}{n'}$,  
$ g=\gmult_{\D(\dig)}(\tilde z)$, 
and $g'=\gmult_{\D(\dig')}(z)$. Then

\[
\gmult_{\D(\dig\lexp\dig')}(z)=
\begin{array}{l}
\left\{\begin{aligned}
&g &\mbox{if }\;z\not\in\spec_{\D}(\dig')\setminus\{t'\},\;\tilde z\in\spec_{\D}(\dig);\\
&ng' &\mbox{if }\; z\in\spec_{\D}(\dig')\setminus\{t'\},\;\tilde z\not\in\spec_{\D}(\dig);\\
&ng'+g &\mbox{if }\;z\in\spec_{\D}(\dig')\setminus\{t'\},\;\tilde z\in\spec_{\D}(\dig),\;ES_{\D(\dig')}(z)\perp \bone_{n'} ;\\
&ng' &\mbox{if }\;z\in\spec_{\D}(\dig')\setminus\{t'\},\;\tilde z\in\spec_{\D}(\dig),\;ES_{\D(\dig')}(z)\not\perp \bone_{n'} ;\\
&0 &\mbox{otherwise}.
\end{aligned}\right.
\end{array}
\]
\end{corollary}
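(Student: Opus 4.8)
The plan is to observe that, under the hypothesis $\diam \dig' \leq g(\dig)$, the distance matrix $\D(\dig \lexp \dig')$ is exactly an instance of the matrix construction $M\lexp M'$ studied in Theorem \ref{thm:spectrum_lex_prod_general_matrices} and Theorem \ref{thm_geometric_mult_lex_product_matrices}, with $M=\D(\dig)$ and $M'=\D(\dig')$. The entire corollary then reduces to checking that $M$ and $M'$ meet the hypotheses of those two theorems and transcribing their conclusions into distance-spectral language; there is no new computation to perform.

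First I would invoke Observation \ref{obs:lex-longcycle}: because $\diam \dig' \leq g(\dig)$, the distance formula of Proposition \ref{distlex} collapses, and for a suitable ordering of the vertices one has $\D(\dig \lexp \dig') = \D(\dig)\otimes \J_{n'} + \I_n \otimes \D(\dig') = \D(\dig)\lexp \D(\dig')$. This structural identity is the crux; everything after it is verification. Next I would confirm the standing hypotheses of the two theorems for $M=\D(\dig)$ and $M'=\D(\dig')$. Both matrices are nonnegative since distances are nonnegative. Both are irreducible: as $\dig$ (respectively $\dig'$) is strongly connected, every ordered pair of distinct vertices is joined by a dipath, so every off-diagonal entry of $\D(\dig)$ (respectively $\D(\dig')$) is a positive integer, and a matrix all of whose off-diagonal entries are positive is irreducible. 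Finally, the $t'$-transmission regularity of $\dig'$ is precisely the statement $M'\bone_{n'}=t'\bone_{n'}$, so $t'$ plays the role of $\rho'$, while $\dev_1 = \rho(\D(\dig))$ plays the role of $\rho(M)=\lambda_1$ (note that $\dig$ itself need not be transmission regular, which is consistent with the fact that Theorem \ref{thm:spectrum_lex_prod_general_matrices} imposes no all-ones eigenvector condition on $M$).

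With the hypotheses in place, the spectrum formula is an immediate substitution into Theorem \ref{thm:spectrum_lex_prod_general_matrices}, identifying $\spec(M)$ with $\dspec(\dig)=(\dev_1,\dots,\dev_n)$ and $\spec(M')$ with $\dspec(\dig')=(t',\dev'_2,\dots,\dev'_{n'})$. The geometric-multiplicity statement follows likewise by specializing Theorem \ref{thm_geometric_mult_lex_product_matrices}, with $\tilde z=(z-t')/n'$, $g=\gmult_{\D(\dig)}(\tilde z)$, $g'=\gmult_{\D(\dig')}(z)$, and the orthogonality condition read as $ES_{\D(\dig')}(z)\perp\bone_{n'}$ versus $ES_{\D(\dig')}(z)\not\perp\bone_{n'}$. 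I do not expect any genuine obstacle: the only step requiring a moment of justification is the irreducibility of the distance matrices, and that is settled at once by noting that strong connectivity forces strictly positive off-diagonal distance entries.
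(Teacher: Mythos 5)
Your proposal is correct and matches the paper's route exactly: the paper presents this corollary as an immediate consequence of Observation \ref{obs:lex-longcycle} (giving $\D(\dig\lexp\dig')=\D(\dig)\lexp\D(\dig')$) combined with Theorems \ref{thm:spectrum_lex_prod_general_matrices} and \ref{thm_geometric_mult_lex_product_matrices}, which is precisely your argument. Your explicit verification of the hypotheses (nonnegativity, irreducibility from strictly positive off-diagonal distances under strong connectivity, and $\D(\dig')\bone_{n'}=t'\bone_{n'}$ from transmission regularity) fills in details the paper leaves implicit, and is accurate.
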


To establish a result about the distance matrix of a lexicographic product when every vertex of the first factor is incident with a doubly directed arc, we make use of Observation \ref{obs:lex-doubly-directed}, Theorem \ref{thm:spectrum_lex_prod_general_matrices},  Theorem \ref{thm_geometric_mult_lex_product_matrices}, and the next proposition.

\begin{proposition}\label{o:adj-comp}
Let $\dig$ be an $r$-out-regular digraph with $\spec_\A(\dig)=\{r,\alpha_2,\dots,\alpha_{n}\}$ and let  $B= \A(\dig)+2\A(\overline{\dig})$.  Then $B$ is an irreducible nonnegative matrix, $\spec(B)=\{2n-2-r,-(\alpha_2+2),\dots,-(\alpha_{n}+2)\}$,  and $\rho(B)=2n-2-r$.  Furthermore, $\gmult_B(-\alpha_j-2)=\gmult_{\A(\dig)}(\alpha_j)$ for $\alpha_j\ne r$ and    $\gmult_B(-r-2)=\gmult_{\A(\dig)}(r)-1$. \vspace{-5pt}

Suppose $\bv_j$ is an eigenvector of $\A(\dig)$ for eigenvalue $\alpha_j$ for $j=2,\dots,k$, and define $\beta_j=\frac{2\bv_j^T\bone_{n}}{r-\alpha_j-2n}$.  Then $\bone_{n}$ is an eigenvector of $B$ for eigenvalue $2n-2-r$, and   $\bv_j+\beta_j\bone_{n}$ is an eigenvector of $B$ for eigenvalue $-\alpha_j-2$ for $j=2,\dots,k$.  
\end{proposition}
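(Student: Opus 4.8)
The plan is to reduce everything to the single identity $B = 2\J_n - 2\I_n - \A(\dig)$, which I would establish first. Since $\dig$ has no loops, for each off-diagonal pair $(v_i,v_j)$ exactly one of $(v_i,v_j)\in E(\dig)$ or $(v_i,v_j)\in E(\overline\dig)$ holds, so $\A(\dig)+\A(\overline\dig)=\J_n-\I_n$; substituting $\A(\overline\dig)=\J_n-\I_n-\A(\dig)$ into $B=\A(\dig)+2\A(\overline\dig)$ gives the claimed form. Writing $A:=\A(\dig)$, out-regularity gives $A\bone_n=r\bone_n$, hence $B\bone_n=(2n-2-r)\bone_n$. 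Nonnegativity of $B$ is immediate, and since every off-diagonal entry of $B$ equals $1$ or $2$ and is therefore positive, the digraph of $B$ is complete and strongly connected, so $B$ is irreducible. As $B$ is nonnegative with constant row sum $2n-2-r$, its spectral radius equals this common row sum, giving $\rho(B)=2n-2-r$.

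For the spectrum I would apply Proposition \ref{prop:MJ-JCF} to $M=A$ to obtain an invertible $C$ with $C^{-1}\J_n C=\diag(n,0,\dots,0)$ and $C^{-1}AC=\mtx{r & \bx^T \\ \boldzero & R}$, where $R$ is a Jordan matrix with diagonal entries $\alpha_2,\dots,\alpha_n$. Applying the \emph{same} $C$ to $B=2\J_n-2\I_n-A$ yields the block upper triangular matrix $C^{-1}BC=\mtx{2n-2-r & -\bx^T \\ \boldzero & -2\I_{n-1}-R}$, whose diagonal entries are $2n-2-r$ together with $-\alpha_j-2$ for $j=2,\dots,n$. Reading off the diagonal gives the asserted spectrum at once.

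For the geometric multiplicities I would solve the eigenvector equations directly in these two block forms. A vector $(a,\bw)^T$ lies in the $\alpha_j$-eigenspace of $C^{-1}AC$ iff $\bw\in\ker(R-\alpha_j\I_{n-1})$ and $(r-\alpha_j)a=-\bx^T\bw$, while it lies in the $(-\alpha_j-2)$-eigenspace of $C^{-1}BC$ iff $\bw\in\ker(R-\alpha_j\I_{n-1})$ and $(2n-r+\alpha_j)a=\bx^T\bw$. When $\alpha_j\ne r$ both scalar relations have nonzero coefficient (indeed $|\alpha_j|\le\rho(A)=r\le n-1$ forces $\alpha_j\ne r-2n$), so $a$ is determined by $\bw$ and both eigenspaces have dimension $\dim\ker(R-\alpha_j\I_{n-1})$; hence $\gmult_B(-\alpha_j-2)=\gmult_A(\alpha_j)$. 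The subtle case $\alpha_j=r$ is the main obstacle. For $B$ the coefficient is $2n\ne0$, so again $\gmult_B(-r-2)=\dim\ker(R-r\I_{n-1})$; but for $A$ the relation degenerates to $\bx^T\bw=0$ with $a$ free, giving $\gmult_A(r)=1+\dim\big(\ker(R-r\I_{n-1})\cap\ker\bx^T\big)$. I must show $\ker(R-r\I_{n-1})\subseteq\ker\bx^T$, so that this equals $1+\dim\ker(R-r\I_{n-1})$ and the desired $\gmult_B(-r-2)=\gmult_A(r)-1$ follows. The key fact is that $r=\rho(A)$ is a \emph{semisimple} eigenvalue of $A$: since $A$ is nonnegative with constant row sums $r$, the matrix $r^{-1}A$ is row-stochastic when $r>0$ (and $A=O$ trivially when $r=0$), so its powers are entrywise bounded and the eigenvalue $1$ cannot lie in a Jordan block of size $\ge2$. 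Consequently, any $\bw\in\ker(R-r\I_{n-1})$ with $\bx^T\bw\ne0$ would make $(0,\bw)^T$ a length-two Jordan chain of $C^{-1}AC$ at $r$ (it maps to the nonzero multiple $(\bx^T\bw)\,\textbf{e}_1$ of the eigenvector $\textbf{e}_1$, which is in turn annihilated), contradicting semisimplicity; thus $\bx^T$ annihilates $\ker(R-r\I_{n-1})$, as required.

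Finally, the explicit eigenvectors would be verified by direct substitution into $B=2\J_n-2\I_n-A$. Using $\J_n\bv_j=(\bv_j^T\bone_n)\bone_n$ and $A\bv_j=\alpha_j\bv_j$ one computes $B\bv_j=2(\bv_j^T\bone_n)\bone_n-(\alpha_j+2)\bv_j$ and $B\bone_n=(2n-2-r)\bone_n$; the latter already shows $\bone_n$ is an eigenvector for $2n-2-r$. Then $B(\bv_j+\beta_j\bone_n)=(-\alpha_j-2)(\bv_j+\beta_j\bone_n)$ holds precisely when the coefficients of $\bone_n$ match, i.e. $2(\bv_j^T\bone_n)=\beta_j(r-\alpha_j-2n)$, which gives $\beta_j=\frac{2\bv_j^T\bone_n}{r-\alpha_j-2n}$; the denominator is nonzero because $|\alpha_j|\le r$ rules out $\alpha_j=r-2n$, completing the verification.
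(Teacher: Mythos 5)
Your proof is correct, and its skeleton matches the paper's: both reduce to $B=2\J_n-2\I_n-\A(\dig)$, get irreducibility from the positive off-diagonal entries, invoke Proposition \ref{prop:MJ-JCF} to put $\J_n$ and $\A(\dig)$ simultaneously into the block forms $\diag(n,0,\dots,0)$ and $\bigl[\begin{smallmatrix} r & \bx^T \\ \boldzero & R\end{smallmatrix}\bigr]$, read the spectrum off the resulting triangular form $\bigl[\begin{smallmatrix} 2n-2-r & -\bx^T \\ \boldzero & -2\I_{n-1}-R\end{smallmatrix}\bigr]$, and verify the eigenvectors by the same direct substitution yielding $\beta_j=\frac{2\bv_j^T\bone_n}{r-\alpha_j-2n}$ with the same justification that the denominator is nonzero. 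Where you genuinely diverge is the geometric-multiplicity step, and there your argument is actually more complete than the paper's. The paper applies Lemma \ref{lem_sylvester_1645_10nov} (using that $2n-2-r$ is simple for the irreducible $B$) to conclude $\JCF_B=(2n-2-r)\oplus(-2\I_{n-1}-R)$ and then simply asserts $\gmult_B(-r-2)=\gmult_{\A(\dig)}(r)-1$; this tacitly assumes that $\gmult_{\A(\dig)}(r)$ exceeds $\dim\ker(R-r\I_{n-1})$ by exactly one, i.e.\ that $\ker(R-r\I_{n-1})\subseteq\ker\bx^T$ --- which is not automatic from the block form alone when $\dig$ is merely out-regular, hence possibly not strongly connected, so that $r$ can be a repeated eigenvalue of $\A(\dig)$. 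You identify this as the crux and close it properly: $r^{-1}\A(\dig)$ is row-stochastic (with the trivial case $r=0$ handled separately), its powers are entrywise bounded, so $r$ is a semisimple eigenvalue, and a vector $(0,\bw)^T$ with $(R-r\I_{n-1})\bw=\boldzero$ but $\bx^T\bw\neq 0$ would generate a length-two Jordan chain at $r$, a contradiction. Your direct kernel computations in the two block-triangular forms, in place of the paper's pass through $\JCF_B$, are equivalent in effect but more self-contained, and they make the case analysis ($\alpha_j\ne r$ versus $\alpha_j=r$, with the nonvanishing of $2n-r+\alpha_j$ checked via $|\alpha_j|\le r\le n-1$) fully explicit. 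In short: same route, but your treatment of $\gmult_B(-r-2)$ supplies a justification the paper leaves implicit.
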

\bpf
Observe first that every off-diagonal entry of $B= \A(\dig)+2\A(\overline{\dig})$ is nonzero, so $B$ is an irreducible nonnegative matrix. Furthermore, $\A(\overline{\dig})=\J_{n}-\I_{n}-\A(\dig)$, so  $B= 2\J_{n}-2\I_{n}-\A(\dig)$. 
Hence,
\[
\begin{aligned}
B\bone_{n}=2\J_{n}\bone_{n}-2\I_{n}\bone_{n}-\A(\dig)\bone_{n}=(2n-2-r)\bone_{n}
\end{aligned}
\]
and $2n-2-r$ is the spectral radius of $B$.
Let $\JCF_{\A(\dig)}=\begin{bmatrix}
r & \by^T\\
\boldzero & R
\end{bmatrix}$. Apply  Proposition \ref{prop:MJ-JCF} to choose $C$ such that $C^{-1}\J_{n}C=\begin{bmatrix}
n & \boldzero^T\\
\boldzero & O
\end{bmatrix}$ and $C^{-1}\A(\dig)C=\begin{bmatrix}
r & \textbf{x}^T\\
\boldzero & R
\end{bmatrix}$
for some Jordan matrix $R$ and  $\textbf{x}\in \mathbb{R}^{n-1}$. Then
\begin{eqnarray}
C^{-1}BC&=&2C^{-1}\J_{n}C-2C^{-1}\I_{n}C-C^{-1}\A(\dig)C\nonumber\\
&=&\label{e:JCF-314}
\begin{bmatrix}
2n & \boldzero^T\\
\boldzero & O
\end{bmatrix}
-2\I_{n}
-
\begin{bmatrix}
r & \textbf{x}^T\\
\boldzero & R
\end{bmatrix}
=
\begin{bmatrix}
2n-2-r & -\textbf{x}^T \\
\boldzero & -2\I_{n-1}-R
\end{bmatrix}\!,
\end{eqnarray} %
which shows that $\spec(B)=\{2n-2-r,-(\alpha_2+2),\dots,-(\alpha_{n}+2)\}$.  
Since $B$ is irreducible, $2n-2-r$ is a simple eigenvalue of $B$. Applying Lemma \ref{lem_sylvester_1645_10nov} to \eqref{e:JCF-314}, we see that 
\[
\JCF_{B}=
\begin{bmatrix}
2n-2-r & \boldzero^T \\
\boldzero & -2\I_{n-1}-R
\end{bmatrix}
\]
so that $\gmult_{B}(-\alpha_j-2)=\gmult_{\A(\dig)}(\alpha_j)$, $j=2,\dots,n$ for $\alpha_j\ne r$ and $\gmult_B(-r-2)=\gmult_{\A(\dig)}(r)-1$.

Observe that $r-\alpha_j-2n\neq 0$ because  $|\alpha_j|\le r<n$, where the second inequality is due to the fact that $r$ is the out-degree of each vertex in $\dig$. Hence,
$
|r-\alpha_j-2n|\geq 2n-|r|-|\alpha_j|> 0
$ and $\beta_j$ is well defined.
It is immediate that $\bone_n$ is an eigenvector for $2n-2-r$, and
\[\begin{aligned}
B(\bv_j+\beta_j\bone_{n}) 
&=  2\bone_{n}^T\bv_j \bone_{n}+2n\beta_j\bone_{n} -2\bv_j-2\beta_j\bone_{n}-\alpha_j\bv_j-\beta_jr\bone_n\\
&= (-\alpha_j-2)\bv_j+\lp\frac{2\bone_{n}^T\bv_j}{\beta_j}+2n-2-r\rp\beta_j\bone_n\\
&= (-\alpha_j-2)(\bv_j+\beta_j\bone_{n}).
\end{aligned}\]
\epf

\begin{theorem}\label{thm:TRlexprod-dig_doubly_directed} 
Let $\dig$ and $\dig'$ be  strongly connected digraphs of orders $n$ and $n'$, respectively,   
such that   every vertex of $\dig$ is incident with a doubly directed arc, 
and all vertices in $\dig'$ have out-degree $r'$. Let $\spec_{\D}(\dig)=(\dev_1,\dev_2,\dots,\dev_n)$ and $\spec_{\A}(\dig')=(r',\alpha'_2,\dots,\alpha'_{n'})$.  
Then
\[
\spec_{\D}(\dig{\lexp}\dig')=\left\{n'\dev_i + 2n' - 2 - r', \ i = 1, \dots, n \right\} \cup \left\{{-(\alpha'_j+2)}^{(n)}, \ j = 2, \dots, n' \right\}\!.
\]
Given $z\in \mathbb{C}$, define $\tilde z=\frac{z-2n'+2+r'}{n'}$,  
$g=\gmult_{\D(\dig)}(\tilde z)$, 
and $g'=\gmult_{\A(\dig')}(-z-2)$. Then
\[
\gmult_{\D(\dig \lexp \dig')}(z)=
\begin{array}{l}
\left\{\begin{aligned}
&g &\mbox{if }\;-z-2\not\in\spec_{\A}(\dig')\setminus\{r'\},\;\tilde z\in\spec_{\D}(\dig);\\
&ng' &\mbox{if }\; -z-2\in\spec_{\A}(\dig')\setminus\{r'\},\;\tilde z\not\in\spec_{\D}(\dig);\\
&ng'+g &\mbox{if }\;-z-2\in\spec_{\A}(\dig')\setminus\{r'\},\;\tilde z\in\spec_{\D}(\dig),\;ES_{\A(\dig')}(-z-2)\perp \bone_{n'} ;\\
&ng' &\mbox{if }\;-z-2\in\spec_{\A}(\dig')\setminus\{r'\},\;\tilde z\in\spec_{\D}(\dig),\;ES_{\A(\dig')}(-z-2)\not\perp \bone_{n'} ;\\
&0 &\mbox{otherwise}.
\end{aligned}\right.
\end{array}
\]
\end{theorem}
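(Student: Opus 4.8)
The plan is to realize $\D(\dig\lexp\dig')$ as one of the matrix products analyzed in Theorem~\ref{thm:spectrum_lex_prod_general_matrices} and Theorem~\ref{thm_geometric_mult_lex_product_matrices}, and then translate the resulting formulas back into digraph quantities. Since every vertex of $\dig$ is incident with a doubly directed arc, Observation~\ref{obs:lex-doubly-directed} gives $\D(\dig\lexp\dig')=\D(\dig)\lexp B$, where $B=\A(\dig')+2\A(\OL{\dig'})$. I would therefore set $M=\D(\dig)$ and $M'=B$. The matrix $M$ is irreducible nonnegative because $\dig$ is strongly connected, and Proposition~\ref{o:adj-comp} shows that $B$ is irreducible nonnegative with $B\bone_{n'}=(2n'-2-r')\bone_{n'}$, so the hypotheses of both theorems hold with $\rho'=2n'-2-r'$.

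For the spectrum I would apply Theorem~\ref{thm:spectrum_lex_prod_general_matrices} directly: its formula expresses $\spec(M\lexp M')$ in terms of $\spec(M)=\dspec(\dig)=\{\dev_1,\dots,\dev_n\}$ and the non-Perron eigenvalues of $M'=B$. Substituting $\spec(B)=\{2n'-2-r',-(\alpha'_2+2),\dots,-(\alpha'_{n'}+2)\}$ from Proposition~\ref{o:adj-comp} produces exactly the claimed spectrum, with the values $n'\dev_i+2n'-2-r'$ coming from the first family and each $-(\alpha'_j+2)$ of multiplicity $n$ from the second.

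For the geometric multiplicities I would invoke Theorem~\ref{thm_geometric_mult_lex_product_matrices} with the same $M,M'$ and rewrite its five cases in the language of $\A(\dig')$. Here $\tilde z=\frac{z-\rho'}{n'}=\frac{z-2n'+2+r'}{n'}$ and $g=\gmult_{\D(\dig)}(\tilde z)$ match the statement verbatim. The condition $z\in\spec(M')\setminus\{\rho'\}$ becomes $-z-2\in\spec_{\A}(\dig')\setminus\{r'\}$, using that $r'$ is a simple eigenvalue of $\A(\dig')$ and that no $-(\alpha'_j+2)$ equals $2n'-2-r'$ (which follows from $|\alpha'_j|\le r'<n'$), and the quantity $g'=\gmult_{M'}(z)=\gmult_B(z)$ equals $\gmult_{\A(\dig')}(-z-2)$ by Proposition~\ref{o:adj-comp}.

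The one step requiring genuine work---the \emph{main obstacle}---is matching the eigenspace orthogonality conditions: I must show that for $\alpha=-z-2\in\spec_{\A}(\dig')\setminus\{r'\}$,
\[
ES_B(-(\alpha+2))\perp\bone_{n'}\iff ES_{\A(\dig')}(\alpha)\perp\bone_{n'}.
\]
To do this I would use the explicit eigenvector correspondence in Proposition~\ref{o:adj-comp}: the linear map $\phi(\bv)=\bv+\beta\bone_{n'}$ with $\beta=\frac{2\bv^T\bone_{n'}}{r'-\alpha-2n'}$ sends $ES_{\A(\dig')}(\alpha)$ into $ES_B(-(\alpha+2))$, is injective there (any kernel vector would be a multiple of $\bone_{n'}$, impossible since $\alpha\ne r'$), and hence is an isomorphism onto $ES_B(-(\alpha+2))$ by the equality of geometric multiplicities. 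A short computation gives $\phi(\bv)^T\bone_{n'}=\bv^T\bone_{n'}\cdot\frac{r'-\alpha}{r'-\alpha-2n'}$, and since the scalar factor is nonzero, $\phi(\bv)\perp\bone_{n'}$ if and only if $\bv\perp\bone_{n'}$; applying this across a basis yields the displayed equivalence. With this in hand, the third and fourth cases of Theorem~\ref{thm_geometric_mult_lex_product_matrices} translate into the third and fourth cases of the statement, completing the proof.
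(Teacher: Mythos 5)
Your proposal is correct and follows essentially the same route as the paper's proof: it uses Observation \ref{obs:lex-doubly-directed} to write $\D(\dig\lexp\dig')=\D(\dig)\lexp(\A(\dig')+2\A(\overline{\dig'}))$, Proposition \ref{o:adj-comp} for the spectrum and geometric multiplicities of the second factor, and Theorems \ref{thm:spectrum_lex_prod_general_matrices} and \ref{thm_geometric_mult_lex_product_matrices} for the product, exactly as the paper does. Your treatment of the orthogonality equivalence via the map $\phi(\bv)=\bv+\beta\bone_{n'}$ is merely a uniform repackaging of the paper's two-directional argument, which uses the same correction vector $\beta_j\bone_{n'}$, the same dimension count from Proposition \ref{o:adj-comp}, and the same nonzero scalar factor $\frac{r'-\alpha'_j}{r'-\alpha'_j-2n'}$, so nothing is missing.
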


\bpf 
By Observation \ref{obs:lex-doubly-directed}, $\D(\dig \lexp \dig') = \D(\dig)\lexp (\A(\dig') + 2\A(\overline{\dig'}))$.
Let $M'= \A(\dig') + 2\A(\overline{\dig'})$, so $\spec(M')=\{2n'-2-r',-(\alpha_2'+2),\dots,-(\alpha_{n'}'+2)\}$ 
by Proposition \ref{o:adj-comp}.
The first part of the theorem then follows from 
Theorem \ref{thm:spectrum_lex_prod_general_matrices}. 

Since $\dig'$ strongly connected, $r'$ is a simple eigenvalue and  $\gmult_{M'}(-\alpha'_j-2)=\gmult_{\A(\dig')}(\alpha'_j)$ for $j=2,\dots,n'$ by Proposition \ref{o:adj-comp}. We now claim that $ES_{M'}(-\alpha'_j-2)\perp \bone_{n'}$ exactly when $ES_{\A(\dig')}(\alpha'_j)\perp\bone_{n'}$. First, if $ES_{\A(\dig')}(\alpha'_j)\perp\bone_{n'}$, then given $\textbf{v}\in ES_{\A(\dig')}(\alpha'_j)$,
\[
M'\textbf{v}=2\J_{n'}\textbf{v}-2\I_{n'}\textbf{v}-\A(\dig')\textbf{v}=(-\alpha'_j-2)\textbf{v}
\]
so that $ES_{\A(\dig')}(\alpha'_j)\subseteq ES_{M'}(-\alpha'_j-2)$. Since $\gmult_{M'}(-\alpha'_j-2)=\gmult_{\A(\dig')}(\alpha'_j)$, we conclude that $ES_{\A(\dig')}(\alpha'_j)= ES_{M'}(-\alpha'_j-2)$ and the claim follows in this case. Suppose now that $ES_{\A(\dig')}(\alpha'_j)\not\perp\bone_{n'}$, and let $\textbf{w}\in ES_{\A(\dig')}(\alpha'_j)$, $\textbf{w}\not\perp\bone_{n'}$.
Define $\tilde{\textbf{w}}=\textbf{w}+\beta_j\bone_{n'}$ with $\beta_j=\frac{2\textbf{w}^T\bone_{n'}}{r'-\alpha'_j-2n'}$ as in Proposition \ref{o:adj-comp}, so that $\tilde{\textbf{w}}\in ES_{M'}(-\alpha'_j-2)$. The claim then follows since
\[
\begin{aligned}
\tilde{\textbf{w}}^T\bone_{n'}&=(\textbf{w}+\beta_j\bone_{n'})^T\bone_{n'}=\textbf{w}^T\bone_{n'}+n'\beta_j
=
\textbf{w}^T\bone_{n'}\left(1+\frac{2n'}{r'-\alpha'_j-2n'}\right)\\
&=
\textbf{w}^T\bone_{n'}
\left(
\frac{r'-\alpha'_j}{r'-\alpha'_j-2n'}
\right)\neq 0
\end{aligned}
\]
because $r'$ is a simple eigenvalue. The second part of the theorem is then a direct consequence of Theorem \ref{thm_geometric_mult_lex_product_matrices}. 
\epf

Using Corollary \ref{cor_spectrum_distance_lexp_long_cycle} and Theorem \ref{thm:TRlexprod-dig_doubly_directed} we obtain a description for the spectrum of the (signless) distance Laplacian matrix of $\dig\lexp\dig'$ under certain conditions. This is done in Corollary \ref{cor_spectrum_dist_lap_lex_long_cycle}, Corollary \ref{cor_spectrum_SIGNLESS_dist_lap_lex_long_cycle}, Corollary \ref{cor_spectrum_dist_lap_lex_doubly_dir} and Corollary \ref{cor_spectrum_dist_SIGNLESS_lap_lex_doubly_dir}.
\begin{corollary}
\label{cor_spectrum_dist_lap_lex_long_cycle}
Let $\dig$ and $\dig'$ be strongly connected digraphs of orders $n$ and $n'$, respectively,  such that $\dig$ is $t$-transmission regular, $\dig'$ is $t'$-transmission regular and $\diam \dig' \leq \ds  g(\dig)$.
Let $\spec_{\D^L}(\dig)=(0,\dlev_2,\dots,\dlev_n)$ and $\spec_{\D^L}(\dig')=(0,{\dlev}'_2,\dots,{\dlev}'_{n'})$.  
Then $\dig\lexp\dig'$ is $(tn'+t')$-transmission regular and
\[
spec_{\D^L}(\dig\lexp\dig')=\{0\}\cup\{n'\dlev_i, \ i=2,\dots,n\}\cup \{({{\dlev}'_j}+tn')^{(n)}, \ j=2,\dots,n'\}.
\]
Given $z\in \mathbb{C}$, define $\tilde z=\frac{z}{n'}$, $\hat z=z-tn'$,   $ g=\gmult_{\D^L(\dig)}(\tilde z),$ and $  g'=\gmult_{\D^L(\dig')}(\hat z)$. Then
\[
\gmult_{\D^L(\dig\lexp\dig')}(z)=
\begin{array}{l}
\left\{\begin{aligned}
&ng' &\mbox{if }\; \hat z\in\spec_{\D^L}(\dig')\setminus\{0\},\;\tilde z\not\in\spec_{\D^L}(\dig);\\
&g &\mbox{if }\;\hat z\not\in\spec_{\D^L}(\dig')\setminus\{0\},\;\tilde z\in\spec_{\D^L}(\dig);\\
&ng'+g &\mbox{if }\;\hat z\in\spec_{\D^L}(\dig')\setminus\{0\},\;\tilde z\in\spec_{\D^L}(\dig),\;ES_{\D^L(\dig')}(\hat z)\perp \bone_{n'} ;\\
&ng' &\mbox{if }\;\hat z\in\spec_{\D^L}(\dig')\setminus\{0\},\;\tilde z\in\spec_{\D^L}(\dig),\;ES_{\D^L(\dig')}(\hat z)\not\perp \bone_{n'} ;\\
&0 &\mbox{otherwise}.
\end{aligned}\right.
\end{array}
\]
\end{corollary}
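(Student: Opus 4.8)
The plan is to reduce the entire statement to Corollary~\ref{cor_spectrum_distance_lexp_long_cycle} by exploiting that, for a transmission regular digraph, the distance Laplacian is a scalar shift of the distance matrix: since $\dig$ is $t$-transmission regular and $\dig'$ is $t'$-transmission regular, $\DL(\dig)=t\I_n-\D(\dig)$ and $\DL(\dig')=t'\I_{n'}-\D(\dig')$. The key preliminary step is to show that $\dig\lexp\dig'$ is itself transmission regular, so that $\DL(\dig\lexp\dig')$ is likewise a scalar shift of $\D(\dig\lexp\dig')$; once this is in place, the distance data supplied by Corollary~\ref{cor_spectrum_distance_lexp_long_cycle} (whose hypotheses, namely $\dig'$ transmission regular and $\diam\dig'\le g(\dig)$, are part of ours) can simply be pushed through the shift.

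First I would compute the transmission of an arbitrary vertex $(x,x')$ of $\dig\lexp\dig'$. Since $\diam\dig'\le g(\dig)$, Observation~\ref{obs:lex-longcycle} applies and gives $d((x,x'),(y,y'))=d_\dig(x,y)$ if $x\ne y$ and $d_{\dig'}(x',y')$ if $x=y$. Summing over all $(y,y')$ and splitting on whether $x=y$ yields $\sum_{y\ne x} n'\,d_\dig(x,y)+\sum_{y'}d_{\dig'}(x',y')=n't+t'$, which is independent of $(x,x')$. Hence $\dig\lexp\dig'$ is $(tn'+t')$-transmission regular and $\DL(\dig\lexp\dig')=(tn'+t')\I_{nn'}-\D(\dig\lexp\dig')$.

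For the spectrum, transmission regularity gives $\dspec(\dig)=\{t,\,t-\dlev_2,\dots,t-\dlev_n\}$ and $\dspec(\dig')=\{t',\,t'-\dlev'_2,\dots,t'-\dlev'_{n'}\}$. Substituting these into Corollary~\ref{cor_spectrum_distance_lexp_long_cycle} and then applying the shift $\mu\mapsto(tn'+t')-\mu$ that converts each distance eigenvalue $\mu$ into the corresponding distance Laplacian eigenvalue produces exactly the claimed multiset: the distance eigenvalue $n't+t'$ becomes $0$, each $n'(t-\dlev_i)+t'$ becomes $n'\dlev_i$, and each $t'-\dlev'_j$ (with multiplicity $n$) becomes $tn'+\dlev'_j$. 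This portion is a direct substitution.

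The geometric-multiplicity formula is where the bookkeeping concentrates, and I expect it to be the only delicate step. Because a scalar shift preserves eigenspaces, $\gmult_{\DL(\dig\lexp\dig')}(z)=\gmult_{\D(\dig\lexp\dig')}(w)$ with $w=(tn'+t')-z$, and the same holds for the two factors. I would then translate the auxiliary quantities of Corollary~\ref{cor_spectrum_distance_lexp_long_cycle}: one checks $\tfrac{w-t'}{n'}=t-\tilde z$, whence $\gmult_{\D(\dig)}(\tfrac{w-t'}{n'})=\gmult_{\DL(\dig)}(\tilde z)=g$, and, writing $\hat z=z-tn'$, $\gmult_{\D(\dig')}(w)=\gmult_{\DL(\dig')}(\hat z)=g'$. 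The three Boolean conditions transfer in the same manner: $w\in\dspec(\dig')\setminus\{t'\}$ iff $\hat z\in\dLspec(\dig')\setminus\{0\}$; the value $\tfrac{w-t'}{n'}$ lies in $\dspec(\dig)$ iff $\tilde z\in\dLspec(\dig)$; and, since $\DL(\dig')=t'\I_{n'}-\D(\dig')$ forces $ES_{\D(\dig')}(w)=ES_{\DL(\dig')}(\hat z)$, the condition $\perp\bone_{n'}$ is preserved. Substituting these equivalences into the four nonzero cases of Corollary~\ref{cor_spectrum_distance_lexp_long_cycle} gives precisely the four cases stated here. The main obstacle is simply to verify that the eigenspace and orthogonality conditions are genuinely invariant under the shift, which holds because $\D(\dig')$ and $\DL(\dig')$ differ by a multiple of the identity and therefore share eigenspaces.
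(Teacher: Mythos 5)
Your proof is correct and is essentially the paper's own derivation: the paper states this corollary without an explicit proof, noting only that it is obtained from Corollary~\ref{cor_spectrum_distance_lexp_long_cycle}, and the intended argument is exactly your reduction via the scalar shift $\DL=t\I-\D$ (valid once transmission regularity of $\dig\lexp\dig'$ is verified, which you do correctly from Observation~\ref{obs:lex-longcycle}). Your careful check that the substitutions $w=(tn'+t')-z$, $\tfrac{w-t'}{n'}=t-\tilde z$, $w=t'-\hat z$ preserve the spectral membership, geometric multiplicities, and the orthogonality condition $ES\perp\bone_{n'}$ (since the matrices differ by multiples of the identity and so share eigenspaces) fills in precisely the bookkeeping the paper leaves implicit.
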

\begin{corollary}
\label{cor_spectrum_SIGNLESS_dist_lap_lex_long_cycle}
Let $\dig$ and $\dig'$ be strongly connected digraphs of orders $n$ and $n'$, respectively,  such that $\dig$ is $t$-transmission regular, $\dig'$ is $t'$-transmission regular and $\diam \dig' \leq \ds  g(\dig)$.
Let $\spec_{\D^Q}(\dig)=(\dQev_1,\dQev_2,\dots,\dQev_n)$ and $\spec_{\D^Q}(\dig')=({\dQev}'_1,{\dQev}'_2,\dots,{\dQev}'_{n'})$.  
Then
\[
spec_{\D^Q}(\dig\lexp\dig')=\{n'\dQev_i+2t', \ i=1,\dots,n\}
\cup \{({{{\dQev}'_j}+tn'})^{(n)}, \ j=2,\dots,n'\}.
\]
Given $z\in \mathbb{C}$,  define $\tilde z=\frac{z-2t'}{n'}$, $\hat z=z-tn'$,   $ g=\gmult_{\D^Q(\dig)}(\tilde z)$, and $ g'=\gmult_{\D^Q(\dig')}(\hat z)$. Then

\[
\gmult_{\D^Q(\dig\lexp\dig')}(z)=
\begin{array}{l}
\left\{\begin{aligned}
&ng' &\mbox{if }\; \hat z\in\spec_{\D^Q}(\dig')\setminus\{2t'\},\;\tilde z\not\in\spec_{\D^Q}(\dig);\\
&g &\mbox{if }\;\hat z\not\in\spec_{\D^Q}(\dig')\setminus\{2t'\},\;\tilde z\in\spec_{\D^Q}(\dig);\\
&ng'+g &\mbox{if }\;\hat z\in\spec_{\D^Q}(\dig')\setminus\{2t'\},\;\tilde z\in\spec_{\D^Q}(\dig),\;ES_{\D^Q(\dig')}(\hat z)\perp \bone_{n'} ;\\
&ng' &\mbox{if }\;\hat z\in\spec_{\D^Q}(\dig')\setminus\{2t'\},\;\tilde z\in\spec_{\D^Q}(\dig),\;ES_{\D^Q(\dig')}(\hat z)\not\perp \bone_{n'} ;\\
&0 &\mbox{otherwise}.
\end{aligned}\right.
\end{array}
\]\end{corollary}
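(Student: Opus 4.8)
The plan is to reduce this statement to its distance-matrix counterpart, Corollary~\ref{cor_spectrum_distance_lexp_long_cycle}, by observing that under the present hypotheses $\DQ(\dig\lexp\dig')$ is obtained from $\D(\dig\lexp\dig')$ by adding a scalar matrix. The first step is to show that $\dig\lexp\dig'$ is transmission regular. Since $\diam\dig'\le g(\dig)$, the distance formula of Observation~\ref{obs:lex-longcycle} applies, and summing it over all vertices shows that the transmission of a vertex $(x,x')$ equals $n'$ times the transmission of $x$ in $\dig$ plus the transmission of $x'$ in $\dig'$; as both factors are transmission regular this is the constant $tn'+t'$. Hence $T(\dig\lexp\dig')=(tn'+t')\I_{nn'}$ and
\[
\DQ(\dig\lexp\dig')=(tn'+t')\I_{nn'}+\D(\dig\lexp\dig').
\]

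The second step uses the elementary fact that, for any $c\in\R$ and any square matrix $A$, one has $\spec(c\I+A)=\{c+\mu:\mu\in\spec(A)\}$ and $ES_{c\I+A}(c+\mu)=ES_A(\mu)$, so that the shift leaves every eigenspace, and hence every geometric multiplicity, unchanged. Taking $c=tn'+t'$, I would feed the spectral and eigenspace data for $\D(\dig\lexp\dig')$ furnished by Corollary~\ref{cor_spectrum_distance_lexp_long_cycle} through this shift. To rephrase the factor data I would use that transmission regularity gives $\DQ(\dig)=t\I_n+\D(\dig)$ and $\DQ(\dig')=t'\I_{n'}+\D(\dig')$, so $\dQev_i=t+\dev_i$ and $\dQev'_j=t'+\dev'_j$, where $\dspec(\dig)=(\dev_1,\dots,\dev_n)$ with $\dev_1=t$ and $\dspec(\dig')=(t',\dev'_2,\dots,\dev'_{n'})$. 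Substituting $\dev_i=\dQev_i-t$ and $\dev'_j=\dQev'_j-t'$ into the spectrum $\{n'\dev_i+t'\}\cup\{{\dev'_j}^{(n)}\}$ of $\D(\dig\lexp\dig')$ and then adding $tn'+t'$ turns the first family into $n'\dQev_i+2t'$ and the second into $\dQev'_j+tn'$, which is the asserted $\dQspec(\dig\lexp\dig')$.

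The final step treats the geometric multiplicities. Setting $w=z-(tn'+t')$, the shift identity gives $\gmult_{\DQ(\dig\lexp\dig')}(z)=\gmult_{\D(\dig\lexp\dig')}(w)$, and I would verify that the four branches of Corollary~\ref{cor_spectrum_distance_lexp_long_cycle} evaluated at $w$ reproduce the four branches of the present statement at $z$. The required identifications are $\frac{w-t'}{n'}=\tilde z-t$, giving $\gmult_{\D(\dig)}(\frac{w-t'}{n'})=\gmult_{\DQ(\dig)}(\tilde z)=g$; $w=\hat z-t'$, giving $\gmult_{\D(\dig')}(w)=\gmult_{\DQ(\dig')}(\hat z)=g'$; the membership equivalences $w\in\dspec(\dig')\setminus\{t'\}\Leftrightarrow\hat z\in\dQspec(\dig')\setminus\{2t'\}$ and $\tilde z-t\in\dspec(\dig)\Leftrightarrow\tilde z\in\dQspec(\dig)$; and finally $ES_{\D(\dig')}(w)=ES_{\DQ(\dig')}(\hat z)$, so that the perpendicularity conditions against $\bone_{n'}$ transfer verbatim. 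I expect no conceptual obstacle here: the one place demanding care is the bookkeeping of the several affine substitutions relating $z$, $w$, $\tilde z$, and $\hat z$, where a sign or an offset is easy to misplace.
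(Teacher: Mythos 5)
Your proposal is correct and follows exactly the route the paper intends: the paper states this corollary as a consequence of Corollary~\ref{cor_spectrum_distance_lexp_long_cycle}, using transmission regularity of $\dig\lexp\dig'$ (which is $(tn'+t')$-transmission regular) to write $\DQ(\dig\lexp\dig')=(tn'+t')\I_{nn'}+\D(\dig\lexp\dig')$ and shifting, just as the paper does explicitly for the Cartesian-product analogues. Your affine bookkeeping checks out: $w=z-tn'-t'=\hat z-t'$, $\frac{w-t'}{n'}=\tilde z-t$, and the shift-invariance of eigenspaces transfers all four branches, including the perpendicularity conditions, verbatim.
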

\begin{corollary}
\label{cor_spectrum_dist_lap_lex_doubly_dir}
Let $\dig$ and $\dig'$ be strongly connected digraphs of orders $n$ and $n'$, respectively, such that $\dig$ is $t$-transmission regular, every vertex of $\dig$ is incident with a doubly directed arc, and all vertices in $\dig'$ have out-degree $r'$. Let $\spec_{\D^L}(\dig)=(0,\dlev_2,\dots,\dlev_n)$ and $\spec_{\A}(\dig')=(r',\alpha'_2,\dots,\alpha'_{n'})$.  
Then $\dig\lexp\dig'$ is $(tn'+2n'-2-r')$-transmission regular and
\[
\spec_{\D^L}(\dig{\lexp}\dig')=\{0\}\cup\{n'\dlev_i, \ i=2,\dots,n\}\cup\{(tn'+2n'+\alpha'_j-r')^{(n)}, \ j=2,\dots,n'\}.
\]
Given $z\in \mathbb{C}$, define $\tilde z=\frac{z}{n'}$, $\hat z=z-tn'-2n'+r'$,   $ g=\gmult_{\D^L(\dig)}(\tilde z),$ and $ g'=\gmult_{\A(\dig')}(\hat z)$. Then
\[
\gmult_{\D^L(\dig\lexp\dig')}(z)=
\begin{array}{l}
\left\{\begin{aligned}
&ng' &\mbox{if }\; \hat z\in\spec_{\A}(\dig')\setminus\{r'\},\;\tilde z\not\in\spec_{\D^L}(\dig);\\
&g &\mbox{if }\;\hat z\not\in\spec_{\A}(\dig')\setminus\{r'\},\;\tilde z\in\spec_{\D^L}(\dig);\\
&ng'+g &\mbox{if }\;\hat z\in\spec_{\A}(\dig')\setminus\{r'\},\;\tilde z\in\spec_{\D^L}(\dig),\;ES_{\A(\dig')}(\hat z)\perp \bone_{n'} ;\\
&ng' &\mbox{if }\;\hat z\in\spec_{\A}(\dig')\setminus\{r'\},\;\tilde z\in\spec_{\D^L}(\dig),\;ES_{\A(\dig')}(\hat z)\not\perp \bone_{n'} ;\\
&0 &\mbox{otherwise}.
\end{aligned}\right.
\end{array}
\]
\end{corollary}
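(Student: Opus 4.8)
The plan is to derive every assertion by combining Theorem \ref{thm:TRlexprod-dig_doubly_directed} with the observation that $\dig \lexp \dig'$ is itself transmission regular, so that both its distance Laplacian and that of the factor $\dig$ are affine images of the corresponding distance matrices. Since $\dig$ is strongly connected, so is $\dig \lexp \dig'$, and hence $\D$ and $\DL$ are well defined on the product. First I would establish transmission regularity. By Observation \ref{obs:lex-doubly-directed}, $\D(\dig \lexp \dig') = \D(\dig) \otimes \J_{n'} + \I_n \otimes M'$ with $M' = \A(\dig') + 2\A(\overline{\dig'})$. Every row sum of $\D(\dig)$ equals $t$ by transmission regularity, every row sum of $\J_{n'}$ equals $n'$, and $M'\bone_{n'} = (2n'-2-r')\bone_{n'}$ by Proposition \ref{o:adj-comp}; hence every vertex of $\dig \lexp \dig'$ has transmission $c := tn' + 2n' - 2 - r'$, which proves the first assertion.

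Next, transmission regularity gives $\DL(\dig \lexp \dig') = c\,\I - \D(\dig \lexp \dig')$, so the distance Laplacian spectrum is obtained by subtracting each distance eigenvalue from $c$. Likewise $\DL(\dig) = t\,\I_n - \D(\dig)$ gives $\dev_1 = t$ and $\dev_i = t - \dlev_i$ for $i \geq 2$, so $\spec_{\D}(\dig) = (t, t-\dlev_2, \dots, t-\dlev_n)$. Substituting this into the spectrum formula of Theorem \ref{thm:TRlexprod-dig_doubly_directed} and then subtracting each resulting distance eigenvalue from $c$ is a routine computation: the spectral radius $n't + 2n' - 2 - r' = c$ maps to $0$, each $n'(t-\dlev_i) + 2n' - 2 - r'$ maps to $n'\dlev_i$, and each $-(\alpha'_j + 2)$ (with multiplicity $n$) maps to $tn' + 2n' + \alpha'_j - r'$, yielding the stated $\spec_{\DL}(\dig \lexp \dig')$.

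For the geometric multiplicities, the identity $\DL(\dig \lexp \dig') = c\,\I - \D(\dig \lexp \dig')$ shows the eigenspaces coincide, so $\gmult_{\DL(\dig \lexp \dig')}(z) = \gmult_{\D(\dig \lexp \dig')}(c - z)$; the same reasoning gives $\gmult_{\DL(\dig)}(\tilde z) = \gmult_{\D(\dig)}(t - \tilde z)$ for the factor. I would then apply the geometric multiplicity statement of Theorem \ref{thm:TRlexprod-dig_doubly_directed} at the distance eigenvalue $w = c - z$ and verify that the two nested changes of variable line up. Theorem's quantity $(w - 2n' + 2 + r')/n'$ equals $t - z/n' = t - \tilde z$, so its $g = \gmult_{\D(\dig)}(t-\tilde z) = \gmult_{\DL(\dig)}(\tilde z)$ coincides with the Corollary's $g$; Theorem's argument $-w - 2 = z - tn' - 2n' + r' = \hat z$, so its $g' = \gmult_{\A(\dig')}(\hat z)$ coincides with the Corollary's $g'$. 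The condition $-w-2 \in \spec_{\A}(\dig') \setminus \{r'\}$ becomes $\hat z \in \spec_{\A}(\dig') \setminus \{r'\}$, the condition $(w-2n'+2+r')/n' \in \spec_{\D}(\dig)$ becomes $\tilde z \in \spec_{\DL}(\dig)$, and the orthogonality condition $ES_{\A(\dig')}(-w-2) \perp \bone_{n'}$ becomes $ES_{\A(\dig')}(\hat z) \perp \bone_{n'}$ verbatim, so all five cases transform into the corresponding cases of the Corollary.

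The main obstacle is purely bookkeeping: keeping the two affine substitutions straight—the global shift $z \mapsto c - z$ relating $\DL$ to $\D$ on the product, and the factor-level shift relating $\tilde z$ to $t - \tilde z$—and checking that each spectral-membership and orthogonality hypothesis maps exactly onto its counterpart. The orthogonality hypothesis causes no trouble, since it refers to $ES_{\A(\dig')}$, which is untouched by either shift; the only care needed is confirming the arithmetic for $\tilde z = z/n'$ and $\hat z = z - tn' - 2n' + r'$ as defined in the statement.
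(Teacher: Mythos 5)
Your proposal is correct and follows exactly the route the paper intends: the paper derives Corollary \ref{cor_spectrum_dist_lap_lex_doubly_dir} from Theorem \ref{thm:TRlexprod-dig_doubly_directed} via the transmission-regular shift $\DL = c\,\I - \D$ (with $c = tn'+2n'-2-r'$), the same affine-substitution argument used explicitly for the Cartesian product propositions. Your bookkeeping checks out: $\tilde w = t - \tilde z$, $-w-2 = \hat z$, and the orthogonality condition is invariant, so all five cases transfer as you claim.
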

\begin{corollary}
\label{cor_spectrum_dist_SIGNLESS_lap_lex_doubly_dir}
Let $\dig$ and $\dig'$ be strongly connected digraphs of orders $n$ and $n'$, respectively, such that $\dig$ is $t$-transmission regular, every vertex of $\dig$ is incident with a doubly directed arc, and all vertices in $\dig'$ have out-degree $r'$. Let $\spec_{\D^Q}(\dig)=(\dQev_1,\dQev_2,\dots,\dQev_n)$ and $\spec_{\A}(\dig')=(r',\alpha'_2,\dots,\alpha'_{n'})$.  
Then
\[
\spec_{\D^Q}(\dig{\lexp}\dig')=\{n'\dQev_i+4n'-4-2r', \ i=1,\dots,n\}\cup\{(tn'+2n'-r'-\alpha'_j-4)^{(n)}, \ j=2,\dots,n'\}.
\]
Given $z\in \mathbb{C}$, define $\tilde z=\frac{z-4n'+4+2r'}{n'}$, $\hat z=tn'+2n'-r'-4-z$,  $ g=\gmult_{\D^Q(\dig)}(\tilde z),$ and $ g'=\gmult_{\A(\dig')}(\hat z)$. Then
\[
\gmult_{\D^Q(\dig\lexp\dig')}(z)=
\begin{array}{l}
\left\{\begin{aligned}
&ng' &\mbox{if }\; \hat z\in\spec_{\A}(\dig')\setminus\{r'\},\;\tilde z\not\in\spec_{\D^Q}(\dig);\\
&g &\mbox{if }\;\hat z\not\in\spec_{\A}(\dig')\setminus\{r'\},\;\tilde z\in\spec_{\D^Q}(\dig);\\
&ng'+g &\mbox{if }\;\hat z\in\spec_{\A}(\dig')\setminus\{r'\},\;\tilde z\in\spec_{\D^Q}(\dig),\;ES_{\A(\dig')}(\hat z)\perp \bone_{n'} ;\\
&ng' &\mbox{if }\;\hat z\in\spec_{\A}(\dig')\setminus\{r'\},\;\tilde z\in\spec_{\D^Q}(\dig),\;ES_{\A(\dig')}(\hat z)\not\perp \bone_{n'} ;\\
&0 &\mbox{otherwise}.
\end{aligned}\right.
\end{array}
\]
\end{corollary}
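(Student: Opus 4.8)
The plan is to obtain this corollary as a shift of the distance-matrix result in Theorem \ref{thm:TRlexprod-dig_doubly_directed}, exploiting transmission regularity of the product. First I would record that, under the stated hypotheses ($\dig$ is $t$-transmission regular with every vertex incident to a doubly directed arc, and $\dig'$ is $r'$-out-regular), the product $\dig \lexp \dig'$ is transmission regular with transmission $\tau = tn' + 2n' - 2 - r'$. This is exactly the value established in Corollary \ref{cor_spectrum_dist_lap_lex_doubly_dir}, and it can be re-derived from Observation \ref{obs:lex-doubly-directed} together with Proposition \ref{o:adj-comp}: writing $M' = \A(\dig') + 2\A(\overline{\dig'})$, one has $\D(\dig \lexp \dig') = \D(\dig)\otimes\J_{n'} + \I_n\otimes M'$, and both $\D(\dig)$ and $M'$ have constant row sums, contributing $tn'$ and $2n'-2-r'$ respectively. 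Consequently $T(\dig \lexp \dig') = \tau\,\I_{nn'}$ and $\DQ(\dig \lexp \dig') = \tau\,\I_{nn'} + \D(\dig \lexp \dig')$.

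Since adding a scalar multiple of the identity shifts the spectrum but leaves eigenvectors, generalized eigenvectors, and Jordan structure unchanged, the spectrum of $\DQ(\dig \lexp \dig')$ is obtained by adding $\tau$ to each distance eigenvalue from Theorem \ref{thm:TRlexprod-dig_doubly_directed}, and $\gmult_{\DQ(\dig \lexp \dig')}(z) = \gmult_{\D(\dig \lexp \dig')}(z - \tau)$ for all $z$. For the spectrum I would substitute $\dev_i = \dQev_i - t$ (from $\DQ(\dig) = t\I_n + \D(\dig)$) into the first family $n'\dev_i + 2n' - 2 - r'$ shifted by $\tau$, obtaining $n'\dQev_i + 4n' - 4 - 2r'$, and shift the second family $-(\alpha'_j + 2)$ by $\tau$ to obtain $tn' + 2n' - r' - \alpha'_j - 4$; these match the two families in the statement.

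For the geometric multiplicities the work is bookkeeping: set $w = z - \tau$ and apply Theorem \ref{thm:TRlexprod-dig_doubly_directed} to $w$, checking that its five cases translate to the present five. With $\tilde z = \frac{z - 4n' + 4 + 2r'}{n'}$ and $\hat z = tn' + 2n' - r' - 4 - z$ as in the statement, the affine substitutions give $\frac{w - 2n' + 2 + r'}{n'} = \tilde z - t$ and $-w - 2 = \hat z$. Because $\gmult$ and eigenspaces are invariant under the shift by $t\I_n$ relating $\D(\dig)$ and $\DQ(\dig)$, the hypothesis $\tilde z - t \in \spec_{\D}(\dig)$ becomes $\tilde z \in \spec_{\DQ}(\dig)$ with $g = \gmult_{\DQ(\dig)}(\tilde z) = \gmult_{\D(\dig)}(\tilde z - t)$, while all conditions on $\dig'$ (through $\spec_{\A}(\dig')$, $g' = \gmult_{\A(\dig')}(\hat z)$, and $ES_{\A(\dig')}(\hat z) \perp \bone_{n'}$) pass through unchanged, since $\dig'$ enters both results via the same matrix $M'$. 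Matching the five cases finishes the argument. The only point demanding care — though not a genuine difficulty — is verifying that the two changes of variable align exactly across all cases and that transmission regularity holds, which is what legitimizes reducing the whole statement to a scalar shift.
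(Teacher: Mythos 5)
Your proof is correct and is essentially the paper's intended argument: the paper states this corollary without an explicit proof, as a consequence of Theorem \ref{thm:TRlexprod-dig_doubly_directed}, and the reduction it has in mind is exactly your observation that the added hypothesis of $t$-transmission regularity of $\dig$ makes $\dig\lexp\dig'$ transmission regular with transmission $\tau=tn'+2n'-2-r'$, so that $\DQ(\dig\lexp\dig')=\tau\,\I_{nn'}+\D(\dig\lexp\dig')$ and both spectrum and geometric multiplicities transfer under the scalar shift. Your explicit verification of the changes of variable, namely $\frac{w-2n'+2+r'}{n'}=\tilde z-t$ and $-w-2=\hat z$ with $w=z-\tau$, together with $\gmult_{\DQ(\dig)}(\tilde z)=\gmult_{\D(\dig)}(\tilde z-t)$, supplies precisely the bookkeeping the paper leaves implicit.
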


We next provide a description of the eigenvectors of $M \lexp M'$ from the eigenvectors of $M$ and $M'$, addressing the first two cases in Theorem \ref{thm_geometric_mult_lex_product_matrices}.

\begin{theorem}
\label{prop:evectors_lex_prod_general_matrices}
Let $M\in\mathbb{R}^{n\times n}$ and $M'\in\mathbb{R}^{n'\times n'}$ be irreducible nonnegative matrices, and suppose that $M'\bone_{n'}=\rho'\bone_{n'}$ for some $\rho'\in\mathbb{R}$. Let $\{\bv_1, \dots, \bv_k\}$ be a linearly independent set of eigenvectors 
with $M \bv_i = \lambda_i \bv_i$, 
and let $\{\bone_{n'},\bv_2', \dots, \bv_{k'}'\}$ be a linearly independent set of eigenvectors 
with $M' \bv_{j}' = \lambda_j' \bv_j'$, 
Then 
\ben[(1)]
\item \label{lexevec_1} For $i=1, \dots, k$, \, $\bv_i \otimes \bone_{n'}$ is  an eigenvector of $M \lexp M'$ corresponding to the eigenvalue  $n'\lambda_i + \rho'$.
\item \label{lexevec_2} For $j=2, \dots, k'$,  for $i=1, \dots, k$,  define $\gamma_{ij} = \frac{-\lambda_i{\bv_j'}^T\bone_{n'}}{\rho' + n'\lambda_i - \lambda_j'}$ when  $\lambda_j' \neq n'\lambda_i + \rho'$. Then
$\bv_i \otimes \bv_j' +\gamma_{ij}\bv_i \otimes \bone_{n'}$     is an eigenvector of $M\lexp M'$ for the eigenvalue  $\lambda_j'$.
\een
Furthermore, the set of eigenvectors of ${M}\lexp{M}'$ described in \eqref{lexevec_1} and \eqref{lexevec_2} is linearly independent. 
\end{theorem}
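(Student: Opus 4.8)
The plan is to verify the two eigenvector claims by direct computation from the definition $M\lexp M'=M\otimes\J_{n'}+\I_n\otimes M'$, using only the identities $\J_{n'}\bone_{n'}=n'\bone_{n'}$ and $M'\bone_{n'}=\rho'\bone_{n'}$ together with the mixed-product rule for Kronecker products, and then to deduce the linear-independence assertion by reducing everything to the Kronecker basis furnished by Lemma~\ref{lem:kron-basis}.

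For \eqref{lexevec_1}, I would apply $M\lexp M'$ to $\bv_i\otimes\bone_{n'}$ and split it across the two summands: $(M\otimes\J_{n'})(\bv_i\otimes\bone_{n'})=(M\bv_i)\otimes(\J_{n'}\bone_{n'})=n'\lambda_i(\bv_i\otimes\bone_{n'})$, while $(\I_n\otimes M')(\bv_i\otimes\bone_{n'})=\bv_i\otimes(M'\bone_{n'})=\rho'(\bv_i\otimes\bone_{n'})$. Summing yields the eigenvalue $n'\lambda_i+\rho'$.

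For \eqref{lexevec_2}, the crucial observation is that $\J_{n'}\bv_j'=({\bv_j'}^T\bone_{n'})\bone_{n'}$, so that $(M\lexp M')(\bv_i\otimes\bv_j')=\lambda_j'(\bv_i\otimes\bv_j')+\lambda_i({\bv_j'}^T\bone_{n'})(\bv_i\otimes\bone_{n'})$, the second term being a multiple of the part-\eqref{lexevec_1} eigenvector but attached to the \emph{wrong} eigenvalue. I would therefore add the correction $\gamma_{ij}\bv_i\otimes\bone_{n'}$, evaluate it via \eqref{lexevec_1}, collect the coefficient of $\bv_i\otimes\bone_{n'}$, and impose that the total equal $\lambda_j'$ times the corrected vector. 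This gives the scalar equation $\lambda_i({\bv_j'}^T\bone_{n'})+\gamma_{ij}(n'\lambda_i+\rho')=\lambda_j'\gamma_{ij}$, whose unique solution is exactly the stated $\gamma_{ij}$, well-defined precisely because $\lambda_j'\ne n'\lambda_i+\rho'$.

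Finally, for linear independence I would observe that, writing $\bv_1'=\bone_{n'}$, every vector in \eqref{lexevec_1} and \eqref{lexevec_2} lies in the span of $\{\bv_i\otimes\bv_j': 1\le i\le k,\ 1\le j\le k'\}$, which is linearly independent by Lemma~\ref{lem:kron-basis} since $\{\bv_1,\dots,\bv_k\}$ and $\{\bone_{n'},\bv_2',\dots,\bv_{k'}'\}$ are each independent. Setting a combination $\sum_{i=1}^{k} c_i(\bv_i\otimes\bone_{n'})+\sum_{i=1}^{k}\sum_{j=2}^{k'}d_{ij}(\bv_i\otimes\bv_j'+\gamma_{ij}\bv_i\otimes\bone_{n'})$ equal to zero and expanding in this basis, the coefficient of $\bv_i\otimes\bv_j'$ with $j\ge 2$ is $d_{ij}$, forcing every $d_{ij}=0$; after that, the coefficient of $\bv_i\otimes\bone_{n'}$ reduces to $c_i$, forcing those to vanish as well. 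I do not expect a genuine obstacle here: the computations in \eqref{lexevec_1} and \eqref{lexevec_2} are routine, and the only point requiring care is bookkeeping the cross terms so that the Kronecker basis cleanly separates the two families.
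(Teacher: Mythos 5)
Your proposal is correct and follows essentially the same route as the paper's proof: the same direct Kronecker-product computations for \eqref{lexevec_1} and \eqref{lexevec_2} (with the same scalar equation determining $\gamma_{ij}$), and linear independence via Lemma~\ref{lem:kron-basis}. Your only addition is to spell out the coefficient-extraction argument that the paper compresses into ``elementary linear algebra,'' which is a harmless elaboration rather than a different approach.
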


\begin{proof} First,  $(M\lexp M')(\bv_i \otimes \bone_{n'})  =   (M\otimes \J_{n'}+\I_n\otimes M')(\bv_i \otimes \bone_{n'})  
 =  (M\bv_i)\otimes(\J_{n'} \bone_{n'}) + (\I_n\bv_i) \otimes (M'\bone_{n'})
 =  (\lambda_i\bv_i \otimes n'\bone_{n'}) + (\bv_i \otimes \rho'\bone_{n'})
 =  (n'\lambda_i + \rho')(\bv_i \otimes \bone_{n'})$. For the second statement,\vspace{-5pt}
\[\begin{aligned} (M\lexp M')(\bv_i \otimes \bv_j' +\gamma_{ij}\bv_i \otimes \bone_{n'}) &= \\
 (M\otimes \J_{n'}+\I_n\otimes M')(\bv_i \otimes \bv_j' +\gamma_{ij}\bv_i \otimes \bone_{n'}) &=\\
     (M\bv_i)\otimes(\J_{n'} \bv_j') + (\I_n\bv_i) \otimes (M'\bv_j') + \gamma_{ij} (M\bv_i)\otimes(\J_{n'} \bone_{n'}) + \gamma_{ij}(\I_n\bv_i) \otimes (M'\bone_{n'})&=\\
  \lambda_i {\bv_j'}^T\bone_{n'} (\bv_i \otimes \bone_{n'}) + \lambda_j'(\bv_i \otimes \bv_j') +  \gamma_{ij} \lambda_i n'(\bv_i \otimes \bone_{n'}) +  \gamma_{ij} \rho'(\bv_i \otimes \bone_{n'})&=\\
 \lambda_j' (\bv_i \otimes \bv_j' +\gamma_{ij}\bv_i \otimes \bone_{n'}).& \end{aligned}\] 
 since $ - \lambda_j' \gamma_{ij} + {\lambda_i \bv_j'}^T\bone_{n'} + \gamma_{ij} \lambda_i n' + \gamma_{ij} \rho' = 0.$
 
The eigenvectors are linearly independent  by Lemma \ref{lem:kron-basis} and elementary linear algebra.  \end{proof}
In Corollaries \ref{cor_e_vec_adjacency_lex},  \ref{cor_e_vec_distance_lex_bigcycle}, and \ref{cor_e_vec_distance_lex_doubly_directed},
Theorem \ref{prop:evectors_lex_prod_general_matrices} is applied to provide a description of the eigenvectors of the adjacency and distance matrices of the lexicographic product of two digraphs.  Analogous results can be obtained for the (signless) Laplacian and for the (signless) distance Laplacian matrices with appropriate additional hypotheses by using analogous arguments.

\begin{corollary}
\label{cor_e_vec_adjacency_lex}
Let $\dig$ and $\dig'$ be strongly connected digraphs of orders $n$ and $n'$, respectively,  such that 
 $\dig'$ is $r'$-out-regular. Let $\{\bv_1, \dots, \bv_k\}$ be a linearly independent set of eigenvectors 
 with $\mathcal{A}(\dig) \bv_i = \alpha_i \bv_i$, 
 and let $\{\bone_{n'},\bv_2', \dots, \bv_{k'}'\}$ be a linearly independent set of eigenvectors 
 with $\mathcal{A}(\dig') \bv_{j}' = \alpha_j' \bv_j'.$ 
 Then 
\ben[(1)]
\item \label{lexevec_1_adj_lex} For $i=1, \dots, k$, \, $\bv_i \otimes \bone_{n'}$ is  an eigenvector of $\mathcal{A}(\dig\lexp\dig')$ corresponding to the eigenvalue  $n'\alpha_i + r'$.
\item \label{lexevec_2_adj_lex} For $j=2, \dots, k'$,  for $i=1, \dots, k$, define $\gamma_{ij} = \frac{-\alpha_i{\bv_j'}^T\bone_{n'}}{r' + n'\alpha_i - \alpha_j'}$ when $\alpha_j' \neq n'\alpha_i + r'$. Then
$\bv_i \otimes \bv_j' +\gamma_{ij}\bv_i \otimes \bone_{n'}$     is an eigenvector of $\mathcal{A}(\dig\lexp\dig')$ for the eigenvalue  $\alpha_j'$.  \vspace{-5pt}
\een
Furthermore, the set of eigenvectors of $\mathcal{A}(\dig\lexp\dig')$ described in \eqref{lexevec_1_adj_lex} and \eqref{lexevec_2_adj_lex} is linearly independent.
\end{corollary}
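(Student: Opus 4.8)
The plan is to obtain Corollary \ref{cor_e_vec_adjacency_lex} as an immediate specialization of Theorem \ref{prop:evectors_lex_prod_general_matrices} to the adjacency matrices $M=\A(\dig)$ and $M'=\A(\dig')$. The key structural fact, recorded in Section \ref{s:mtx-prod}, is that $\A(\dig\lexp\dig')=\A(\dig)\lexp\A(\dig')$; once this identity is invoked, the eigenvector description furnished by the general theorem applies directly to the adjacency matrix of the digraph product, and all that remains is to check the hypotheses and match the parameters.

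First I would confirm that $M=\A(\dig)$ and $M'=\A(\dig')$ meet the hypotheses of Theorem \ref{prop:evectors_lex_prod_general_matrices}. Both are nonnegative, since their entries lie in $\{0,1\}$, and both are irreducible, since $\dig$ and $\dig'$ are strongly connected. The $r'$-out-regularity of $\dig'$ forces every row sum of $\A(\dig')$ to equal $r'$, i.e.\ $\A(\dig')\bone_{n'}=r'\bone_{n'}$; thus the scalar $\rho'$ of the theorem is realized here as $r'$, and $\bone_{n'}$ is the Perron eigenvector of $\A(\dig')$, matching the normalized spanning set $\{\bone_{n'},\bv_2',\dots,\bv_{k'}'\}$ assumed in the theorem.

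With the substitutions $\lambda_i\mapsto\alpha_i$, $\lambda_j'\mapsto\alpha_j'$, and $\rho'\mapsto r'$, parts \eqref{lexevec_1} and \eqref{lexevec_2} of Theorem \ref{prop:evectors_lex_prod_general_matrices} translate verbatim into parts \eqref{lexevec_1_adj_lex} and \eqref{lexevec_2_adj_lex}: the eigenvalue $n'\lambda_i+\rho'$ becomes $n'\alpha_i+r'$, the eigenvalue $\lambda_j'$ is unchanged as $\alpha_j'$, and the coefficient $\gamma_{ij}=\frac{-\lambda_i{\bv_j'}^T\bone_{n'}}{\rho'+n'\lambda_i-\lambda_j'}$ becomes $\frac{-\alpha_i{\bv_j'}^T\bone_{n'}}{r'+n'\alpha_i-\alpha_j'}$, which is well defined exactly under the stated condition $\alpha_j'\ne n'\alpha_i+r'$. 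The linear independence of the exhibited eigenvectors is inherited directly from the concluding assertion of the theorem.

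Because the corollary is a literal instance of the general result, I do not expect any genuine obstacle. The only points requiring attention are bookkeeping ones: invoking $\A(\dig\lexp\dig')=\A(\dig)\lexp\A(\dig')$ so that the abstract construction $M\lexp M'$ is the adjacency matrix in question, and verifying that irreducibility follows from strong connectivity and that $\A(\dig')\bone_{n'}=r'\bone_{n'}$ follows from out-regularity. No further computation beyond these checks is needed.
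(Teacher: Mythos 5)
Your proposal is correct and matches the paper's own route exactly: the paper states that Corollary \ref{cor_e_vec_adjacency_lex} is obtained by applying Theorem \ref{prop:evectors_lex_prod_general_matrices} to $M=\A(\dig)$ and $M'=\A(\dig')$ via the identity $\A(\dig\lexp\dig')=\A(\dig)\lexp\A(\dig')$, with no further argument given. Your hypothesis checks (irreducibility from strong connectivity, nonnegativity, $\A(\dig')\bone_{n'}=r'\bone_{n'}$ from $r'$-out-regularity) and parameter substitutions $\rho'\mapsto r'$, $\lambda_i\mapsto\alpha_i$, $\lambda_j'\mapsto\alpha_j'$ are precisely the bookkeeping the paper leaves implicit.
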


\begin{corollary}
\label{cor_e_vec_distance_lex_bigcycle}
Let $\dig$ and $\dig'$ be strongly connected digraphs of orders $n$ and $n'$ 
such that $\dig'$ is $t'$-transmission regular and $\diam \dig' \leq \ds  g(\dig)$. Let $\{\bv_1, \dots, \bv_k\}$ be a linearly independent set of eigenvectors 
with $\mathcal{D}(\dig) \bv_i = \dev_i \bv_i,$ 
and let $\{\bone_{n'},\bv_2', \dots, \bv_{k'}'\}$ be a linearly independent set of eigenvectors 
with $\mathcal{D}(\dig') \bv_{j}' = \dev_j' \bv_j'.$ 
Then 
\ben[(1)]
\item \label{lexevec_1_dist_lex_bigcycle} For $i=1, \dots, k$, \, $\bv_i \otimes \bone_{n'}$ is  an eigenvector of $\mathcal{D}(\dig\lexp\dig')$ corresponding to the eigenvalue  $n'\dev_i + t'$.
\item \label{lexevec_2_dist_lex_bigcycle} For $j=2, \dots, k'$, \ for $i=1, \dots, k$, define $\gamma_{ij} = \frac{-\dev_i{\bv_j'}^T\bone_{n'}}{t' + n'\dev_i - \dev_j'}$ when $\dev_j' \neq n'\dev_i + t'$.  Then 
$\bv_i \otimes \bv_j' +\gamma_{ij}\bv_i \otimes \bone_{n'}$   is an eigenvector of $\mathcal{D}(\dig\lexp\dig')$ for the eigenvalue  $\dev_j'$.\vspace{-5pt}\een
Furthermore, the set of eigenvectors of $\mathcal{D}(\dig\lexp\dig')$ described in \eqref{lexevec_1_dist_lex_bigcycle} and \eqref{lexevec_2_dist_lex_bigcycle} is linearly independent.
\end{corollary}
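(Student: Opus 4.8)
The plan is to recognize Corollary \ref{cor_e_vec_distance_lex_bigcycle} as a direct specialization of Theorem \ref{prop:evectors_lex_prod_general_matrices}, obtained by taking $M = \D(\dig)$ and $M' = \D(\dig')$. First I would invoke Observation \ref{obs:lex-longcycle}: since $\dig'$ is strongly connected (hence has finite diameter) and $\diam \dig' \leq g(\dig)$, the distance formula of Proposition \ref{distlex} collapses to its clean form, giving
\[
\D(\dig \lexp \dig') = \D(\dig) \otimes \J_{n'} + \I_n \otimes \D(\dig') = \D(\dig) \lexp \D(\dig').
\]
This is precisely a matrix product $M \lexp M'$ in the sense of Section \ref{s:mtx-prod}.

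Next I would verify that $M$ and $M'$ satisfy the hypotheses of Theorem \ref{prop:evectors_lex_prod_general_matrices}. Both $\dig$ and $\dig'$ are strongly connected, so every off-diagonal entry of their distance matrices is a finite positive integer; hence $\D(\dig)$ and $\D(\dig')$ are irreducible nonnegative matrices. The $t'$-transmission regularity of $\dig'$ means that every row sum of $\D(\dig')$ equals $t'$, i.e. $M'\bone_{n'} = t'\bone_{n'}$, so the role of $\rho'$ in Theorem \ref{prop:evectors_lex_prod_general_matrices} is played by $t'$. In particular $\bone_{n'}$ is itself an eigenvector of $\D(\dig')$ for $t'$, so the prescribed eigenvector set $\{\bone_{n'},\bv_2', \dots, \bv_{k'}'\}$ required by the theorem is legitimate.

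With these identifications in place, applying Theorem \ref{prop:evectors_lex_prod_general_matrices} and substituting $\lambda_i = \dev_i$, $\lambda_j' = \dev_j'$, and $\rho' = t'$ yields exactly the two families of eigenvectors in \eqref{lexevec_1_dist_lex_bigcycle} and \eqref{lexevec_2_dist_lex_bigcycle}, together with their eigenvalues $n'\dev_i + t'$ and $\dev_j'$ and the stated formula for $\gamma_{ij}$; the asserted linear independence is inherited verbatim from the final sentence of that theorem. Since every step is a direct translation, I do not expect any real obstacle: the only point requiring care is confirming that the girth/diameter hypothesis $\diam\dig' \leq g(\dig)$ is exactly what licenses the use of Observation \ref{obs:lex-longcycle}, so that the distance matrix factors as $\D(\dig)\lexp\D(\dig')$ rather than through the more involved form of Observation \ref{obs:lex-doubly-directed} (which would instead lead, via Proposition \ref{o:adj-comp}, to Theorem \ref{thm:TRlexprod-dig_doubly_directed} and Corollary \ref{cor_e_vec_distance_lex_doubly_directed}).
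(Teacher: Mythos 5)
Your proposal is correct and matches the paper's own route exactly: the paper derives this corollary precisely by combining Observation \ref{obs:lex-longcycle} (which gives $\D(\dig\lexp\dig')=\D(\dig)\lexp\D(\dig')$ under the hypothesis $\diam\dig'\le g(\dig)$) with Theorem \ref{prop:evectors_lex_prod_general_matrices} applied to $M=\D(\dig)$, $M'=\D(\dig')$, $\rho'=t'$. Your verification of the hypotheses (irreducibility of the distance matrices from strong connectivity, constant row sums from transmission regularity, and $\bone_{n'}$ being a legitimate eigenvector of $\D(\dig')$) covers the only points needing checking, so there is nothing to add.
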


\begin{corollary}
\label{cor_e_vec_distance_lex_doubly_directed}
Let $\dig$ and $\dig'$ be strongly connected digraphs of orders $n$ and $n'$  such that  every vertex is incident with a doubly directed arc and all vertices in $\dig'$ have out-degree $r'$. Let $\{\bv_1, \dots, \bv_k\}$ be a linearly independent set of eigenvectors 
with $\mathcal{D}(\dig) \bv_i = \dev_i \bv_i$ 
and let $\{\bone_{n'},\bv_2', \dots, \bv_{k'}'\}$ be a linearly independent set of eigenvectors 
with $\mathcal{A}(\dig') \bv_{j}' = \alpha_j' \bv_j'$. 
Then 
\ben[(1)]
\item \label{lexevec_1_dist_lex_doubly_directed} For $i=1, \dots, k$, \, $\bv_i \otimes \bone_{n'}$ is  an eigenvector of $\mathcal{D}(\dig\lexp\dig')$ corresponding to the eigenvalue  $n'\dev_i + 2n'-2-r'$.
\item \label{lexevec_2_dist_lex_doubly_directed} For $j=2, \dots, k'$, \ for $i=1, \dots, k$, define $\beta_j=\frac{2\bv_j'^T\bone_{n'}}{r'-\alpha'_j-2n'}$ and $\gamma_{ij} = \frac{-\dev_i({\bv_j'}^T\bone_{n'}+n'\beta_j)}{2n'-r'+n'\dev_i+\alpha'_j}$ when $\alpha'_j\neq -2n'+r'-n'\dev_i$.  Then
$\bv_i \otimes \bv_j' +(\beta_j+\gamma_{ij})\bv_i \otimes \bone_{n'}$     is an eigenvector of $\mathcal{D}(\dig\lexp\dig')$ for the eigenvalue  $-\alpha'_j-2$.\vspace{-5pt}
\een
Furthermore, the set of eigenvectors of $\mathcal{D}(\dig\lexp\dig')$ described in \eqref{lexevec_1_dist_lex_doubly_directed} and \eqref{lexevec_2_dist_lex_doubly_directed} is linearly independent.
\end{corollary}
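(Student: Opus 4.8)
The plan is to recognize this corollary as a direct specialization of Theorem \ref{prop:evectors_lex_prod_general_matrices}. By Observation \ref{obs:lex-doubly-directed}, the hypothesis that every vertex of $\dig$ is incident with a doubly directed arc gives $\D(\dig\lexp\dig')=\D(\dig)\lexp M'$ where $M'=\A(\dig')+2\A(\OL{\dig'})$. I would therefore set $M=\D(\dig)$ and apply Theorem \ref{prop:evectors_lex_prod_general_matrices} to this pair, drawing all the eigendata of $M'$ from Proposition \ref{o:adj-comp}. In particular, Proposition \ref{o:adj-comp} tells us that $M'\bone_{n'}=(2n'-2-r')\bone_{n'}$, so that the role of $\rho'$ in Theorem \ref{prop:evectors_lex_prod_general_matrices} is played here by $\rho'=2n'-2-r'$, and $\bone_{n'}$ is the designated eigenvector for $\rho'$.

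For the first claim, Theorem \ref{prop:evectors_lex_prod_general_matrices}\eqref{lexevec_1} immediately gives that $\bv_i\otimes\bone_{n'}$ is an eigenvector of $\D(\dig\lexp\dig')$ for $n'\dev_i+\rho'=n'\dev_i+2n'-2-r'$, as stated. For the second claim, the key intermediate step is to convert eigenvectors of $\A(\dig')$ into eigenvectors of $M'$: Proposition \ref{o:adj-comp} shows that $\bw_j:=\bv_j'+\beta_j\bone_{n'}$, with $\beta_j=\frac{2\bv_j'^T\bone_{n'}}{r'-\alpha'_j-2n'}$, is an eigenvector of $M'$ for the eigenvalue $-\alpha'_j-2$. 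Feeding the family $\{\bone_{n'},\bw_2,\dots,\bw_{k'}\}$ of eigenvectors of $M'$ into Theorem \ref{prop:evectors_lex_prod_general_matrices}\eqref{lexevec_2} produces, for each $i$ and $j$, the eigenvector $\bv_i\otimes\bw_j+\gamma_{ij}\bv_i\otimes\bone_{n'}$ for the eigenvalue $-\alpha'_j-2$, where the coefficient supplied by that theorem is $\gamma_{ij}=\frac{-\dev_i\,\bw_j^T\bone_{n'}}{\rho'+n'\dev_i-(-\alpha'_j-2)}$. Using $\bw_j^T\bone_{n'}=\bv_j'^T\bone_{n'}+n'\beta_j$ and $\rho'=2n'-2-r'$, the denominator simplifies to $2n'-r'+n'\dev_i+\alpha'_j$, recovering exactly the stated $\gamma_{ij}$; the condition $\alpha'_j\neq-2n'+r'-n'\dev_i$ is precisely the nonvanishing of this denominator, i.e.\ the requirement $-\alpha'_j-2\neq n'\dev_i+\rho'$ in Theorem \ref{prop:evectors_lex_prod_general_matrices}. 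Finally, expanding $\bv_i\otimes\bw_j=\bv_i\otimes\bv_j'+\beta_j\,\bv_i\otimes\bone_{n'}$ collapses the eigenvector to $\bv_i\otimes\bv_j'+(\beta_j+\gamma_{ij})\bv_i\otimes\bone_{n'}$, which is the claimed form.

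For linear independence, I would first note that $\{\bone_{n'},\bw_2,\dots,\bw_{k'}\}$ is linearly independent: each $\bw_j$ is obtained from $\bv_j'$ by adding a scalar multiple of $\bone_{n'}$, which already belongs to the independent set $\{\bone_{n'},\bv_2',\dots,\bv_{k'}'\}$, so an elementary column-operation argument preserves independence. The independence of the resulting eigenvectors of $\D(\dig\lexp\dig')$ is then exactly the final assertion of Theorem \ref{prop:evectors_lex_prod_general_matrices}.

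The proof is thus almost entirely a substitution, so there is no genuine obstacle; the only place demanding care is the algebraic bookkeeping that matches the denominator $\rho'+n'\dev_i-(-\alpha'_j-2)$ to $2n'-r'+n'\dev_i+\alpha'_j$ and confirms that the exclusion condition on $\alpha'_j$ coincides with the well-definedness hypothesis of Theorem \ref{prop:evectors_lex_prod_general_matrices}. One should also verify that $\bone_{n'}$ is the correct Perron-type eigenvector of $M'$ matching the normalization required by that theorem, which is guaranteed by Proposition \ref{o:adj-comp}.
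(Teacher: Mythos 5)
Your proposal is correct and follows exactly the route the paper intends: it derives the corollary from Observation \ref{obs:lex-doubly-directed} and Theorem \ref{prop:evectors_lex_prod_general_matrices} applied to $M=\D(\dig)$ and $M'=\A(\dig')+2\A(\OL{\dig'})$, using Proposition \ref{o:adj-comp} both for $\rho'=2n'-2-r'$ and for the shifted eigenvectors $\bv_j'+\beta_j\bone_{n'}$ of $M'$. Your algebraic bookkeeping (denominator $2n'-r'+n'\dev_i+\alpha'_j$, the exclusion condition, and collapsing $\bv_i\otimes\bw_j$ to the stated form with coefficient $\beta_j+\gamma_{ij}$) and your independence argument for $\{\bone_{n'},\bw_2,\dots,\bw_{k'}\}$ all check out.
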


%

%

\section{Direct products and strong products}\label{sDirectStrongprod}

For digraphs $\dig$ and $\dig'$,  $\A(\dig\directp\dig')=\A(\dig)\otimes \A(\dig')$  \cite{EH80} and $\A(\dig\strongp\dig')=\A(\dig\cp\dig') + \A(\dig\directp\dig')$; the formulas for graphs are analogous. The spectrum of the adjacency matrix of a direct product in terms of the  constituents is known:

\begin{theorem}{\rm \cite{EH80}}\label{thm:direct} Let $\dig$ and $\dig'$ be digraphs of orders $n$ and $n'$, respectively, having spectra 
$\spec_{\A}(\dig)=\{\alpha_1,\alpha_2,\dots,\alpha_n\}$ and $\spec_{\A}(\dig')=\{\alpha_1',\alpha'_2,\dots,\alpha'_{n'}\}$.  
Then
\[
\spec_{\A}(\dig{\directp}\dig')=\left\{\alpha_i\alpha_j' : i=1,\dots,n, \  j=1,\dots,n' \right\}.
\]
\end{theorem}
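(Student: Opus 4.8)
The plan is to reduce the statement to the classical fact that the spectrum of a Kronecker product is the multiset of products of the eigenvalues of the factors, invoking the identity $\A(\dig\directp\dig')=\A(\dig)\otimes\A(\dig')$ recorded just above the theorem. Writing $A=\A(\dig)$ and $A'=\A(\dig')$, it suffices to prove that $\spec(A\otimes A')=\{\alpha_i\alpha'_j : i=1,\dots,n,\ j=1,\dots,n'\}$ for arbitrary $A\in\Cnn$ and $A'\in\C^{n'\times n'}$ whose spectra are $\{\alpha_1,\dots,\alpha_n\}$ and $\{\alpha'_1,\dots,\alpha'_{n'}\}$.

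First I would triangularize each factor. Working over $\C$, there exist invertible $P\in\Cnn$ and $P'\in\C^{n'\times n'}$ such that $T:=P^{-1}AP$ and $T':=P'^{-1}A'P'$ are upper triangular, with diagonal entries $\alpha_1,\dots,\alpha_n$ and $\alpha'_1,\dots,\alpha'_{n'}$ respectively (via Schur triangularization, or directly from the Jordan form employed elsewhere in the paper). Next, using the mixed-product property $(X\otimes Y)(Z\otimes W)=(XZ)\otimes(YW)$ together with $(P\otimes P')^{-1}=P^{-1}\otimes P'^{-1}$, I would compute
\[(P\otimes P')^{-1}(A\otimes A')(P\otimes P')=(P^{-1}AP)\otimes(P'^{-1}A'P')=T\otimes T'.\]
This exhibits $A\otimes A'$ as similar to $T\otimes T'$. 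Since $T$ and $T'$ are upper triangular, the block structure of $T\otimes T'$—its $(i,j)$ block being $t_{ij}T'$, which vanishes for $i>j$ and is itself upper triangular when $i=j$—shows that $T\otimes T'$ is upper triangular, with diagonal entries exactly the products $t_{ii}t'_{jj}=\alpha_i\alpha'_j$ as $(i,j)$ ranges over all $nn'$ pairs. Because similar matrices share a characteristic polynomial, and the eigenvalues of a triangular matrix are its diagonal entries counted with multiplicity, the multiset $\spec(A\otimes A')$ equals $\{\alpha_i\alpha'_j\}$, which proves the claim.

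There is no serious obstacle here: the result is classical, and every step is a routine application of standard Kronecker-product identities. The only point meriting mild care is the bookkeeping of multiplicities—verifying that the diagonal of $T\otimes T'$ genuinely enumerates all $nn'$ products $\alpha_i\alpha'_j$ with the correct multiplicity, which it does since triangularization records each algebraic multiplicity faithfully on the diagonal. This is precisely the \emph{triangularize-and-read-off-the-diagonal} strategy already used in the proof of Theorem~\ref{thm:spectrum_lex_prod_general_matrices}, so it fits naturally with the methods of the paper.
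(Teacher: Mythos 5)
Your proof is correct and takes essentially the same route as the paper: Theorem~\ref{thm:direct} is quoted from \cite{EH80}, and as the paper notes in its proof of the strong-product theorem, that argument (via Lancaster's result) likewise conjugates $\A(\dig)\otimes\A(\dig')$ by a Kronecker product of triangularizing similarities and reads the products $\alpha_i\alpha_j'$ off the diagonal of the resulting upper triangular matrix. Your multiplicity bookkeeping via the block structure of $T\otimes T'$ is sound, so nothing further is needed.
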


\begin{theorem}  Let $\dig$ and $\dig'$ be digraphs of orders $n$ and $n'$,  with 
$\spec_{\A}(\dig)=\{\alpha_1,\alpha_2,\dots,\alpha_n\}$ and $\spec_{\A}(\dig')=\{\alpha_1',\alpha'_2,\dots,\alpha'_{n'}\}$.  
Then
\[
\spec_{\A}(\dig{\strongp}\dig')=\left\{\alpha_i\alpha_j'+\alpha_i+\alpha_j' : i=1,\dots,n, \  j=1,\dots,n' \right\}.
\]
\end{theorem}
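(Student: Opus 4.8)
The plan is to reduce the strong product to a single Kronecker product by means of an algebraic factorization and then invoke the standard eigenvalue formula for Kronecker products. First I would set $A=\A(\dig)$ and $A'=\A(\dig')$ and assemble the adjacency matrix of the strong product from the identities stated at the start of this section together with the matrix constructions of Section~\ref{s:mtx-prod}. Since $\A(\dig\strongp\dig')=\A(\dig\cp\dig')+\A(\dig\directp\dig')$, while $\A(\dig\cp\dig')=A\otimes\I_{n'}+\I_n\otimes A'$ and $\A(\dig\directp\dig')=A\otimes A'$, we obtain
\[
\A(\dig\strongp\dig')=A\otimes A'+A\otimes\I_{n'}+\I_n\otimes A'.
\]

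The key step, and the only move in the argument requiring any insight, is to recognize the factorization
\[
A\otimes A'+A\otimes\I_{n'}+\I_n\otimes A'=(A+\I_n)\otimes(A'+\I_{n'})-\I_{nn'}.
\]
This follows immediately by expanding the right-hand side using bilinearity of the Kronecker product: the four terms $A\otimes A'$, $A\otimes\I_{n'}$, $\I_n\otimes A'$, and $\I_n\otimes\I_{n'}$ appear, and the last cancels the subtracted identity $\I_{nn'}$. Everything else is routine bookkeeping.

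Next I would apply the standard spectral fact for Kronecker products: the eigenvalues of $B\otimes B'$, counted with algebraic multiplicity, are exactly the products of the eigenvalues of $B$ with those of $B'$ (see \cite[Theorem~4.4.5]{HJ2}, the same reference used earlier for the Kronecker sum). The eigenvalues of $A+\I_n$ are $\alpha_i+1$ for $i=1,\dots,n$ and those of $A'+\I_{n'}$ are $\alpha'_j+1$ for $j=1,\dots,n'$, so the spectrum of $(A+\I_n)\otimes(A'+\I_{n'})$ is the multiset $\{(\alpha_i+1)(\alpha'_j+1)\}$. Since subtracting $\I_{nn'}$ shifts every eigenvalue by $-1$, the spectrum of $\A(\dig\strongp\dig')$ is $\{(\alpha_i+1)(\alpha'_j+1)-1\}$, and the one-line expansion $(\alpha_i+1)(\alpha'_j+1)-1=\alpha_i\alpha'_j+\alpha_i+\alpha'_j$ yields the claimed formula.

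I do not anticipate any genuine obstacle here: once the factorization is spotted, the result is the cited Kronecker-product eigenvalue theorem together with a scalar shift. If one wished to avoid citing the product formula, one could instead triangularize $A$ and $A'$ (via Schur or Jordan form) and observe that $(A+\I_n)\otimes(A'+\I_{n'})$ is then similar to an upper-triangular matrix with the products $(\alpha_i+1)(\alpha'_j+1)$ along its diagonal, paralleling the Jordan-form technique used throughout Section~\ref{scartprod}.
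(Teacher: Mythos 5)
Your proof is correct, but it takes a genuinely different route from the paper. The paper never consolidates the strong product into a single Kronecker product; instead it conjugates $\A(\dig\strongp\dig')=\A(\dig\cp\dig')+\A(\dig\directp\dig')$ by $C\otimes C'$, where $C,C'$ put $\A(\dig),\A(\dig')$ into Jordan form, and observes that the \emph{same} similarity makes both summands upper triangular --- the Kronecker-sum part with diagonal $\alpha_i+\alpha'_j$ (as in the proof of \cite[Theorem 4.4.5]{HJ2}) and the Kronecker-product part with diagonal $\alpha_i\alpha'_j$ (via Lancaster) --- so the sum is triangular with diagonal $\alpha_i\alpha'_j+\alpha_i+\alpha'_j$. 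Your factorization
\[
\A(\dig\strongp\dig')=(A+\I_n)\otimes(A'+\I_{n'})-\I_{nn'}
\]
is valid (it rests on the fact, implicit in the paper, that the Cartesian and direct arc sets are disjoint because digraphs have no loops, so the adjacency matrices genuinely add) and it shortcuts the whole triangularization: one textbook theorem plus a scalar shift finishes the job, and your closing remark correctly identifies the paper's method as the fallback if one refuses the citation. What your approach buys is brevity and an exact reduction that would also hand you eigenvectors of the strong product for free (Kronecker products of eigenvectors of $A+\I_n$ and $A'+\I_{n'}$); what the paper's approach buys is uniformity --- the simultaneous-triangularization technique is the engine behind nearly every other result in Sections 2--4, and unlike your factorization, which is special to the adjacency matrix of the strong product, it adapts to constructions like $M\cpj M'$ and $M\lexp M'$ where no such multiplicative identity exists. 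One small slip: the Kronecker-product spectrum fact you invoke is not \cite[Theorem 4.4.5]{HJ2} (that is the Kronecker \emph{sum}) but \cite[Theorem 4.2.12]{HJ2}; the paper itself reaches it through Lancaster \cite{L69} via \cite{EH80}. This does not affect the validity of the argument.
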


\bpf
Choose $C$ and $C'$ such that $C^{-1}\A(\dig)C=\JCF_{\A(\dig)}$ and $C'^{-1}\A(\dig')C'=\JCF_{\A(\dig')}$. Consider \[(C^{-1}\otimes C'^{-1})\A(\dig\strongp\dig')(C\otimes C')=(C^{-1}\otimes C'^{-1})\A(\dig\cp\dig')(C\otimes C')+(C^{-1}\otimes C'^{-1})\A(\dig\directp\dig')(C\otimes C').\]
As  in the proof of \cite[Theorem 4.4.5]{HJ2}, $(C^{-1}\otimes C'^{-1})\A(\dig\cp\dig')(C\otimes C')$ is an upper triangular matrix with diagonal entries $\left\{\alpha_i+\alpha_j' : i=1,\dots,n, \  j=1,\dots,n' \right\}.$ The proof of Theorem \ref{thm:direct}, which utilizes a result from Lancaster \cite[p. 259-260]{L69}, shows $(C^{-1}\otimes C'^{-1})\A(\dig\directp\dig')(C\otimes C')$ is an upper triangular matrix with diagonal entries $\left\{\alpha_i\alpha_j' : i=1,\dots,n, \  j=1,\dots,n' \right\}.$
Therefore $(C^{-1}\otimes C'^{-1})\A(\dig\strongp\dig')(C\otimes C')$ is an upper triangular matrix with diagonal entries $\left\{\alpha_i\alpha_j'+\alpha_i+\alpha_j' : i=1,\dots,n, \  j=1,\dots,n' \right\}$. 
\epf

Since the direct product of strongly connected digraphs is not necessarily strongly connected,  the distance matrix may be undefined. However,  the strong product of  strongly connected digraphs is  strongly connected,  and  the following distance formula is known.

\begin{proposition}\label{diststrong} {\rm \cite[Proposition 10.2.1]{H18}}
Let $\dig$ and $\dig'$ be strongly connected digraphs. Then the  distance formula for the strong product $\dig \strongp \dig'$ is
\[
d_{\dig \strongp \dig'}((x,x'),(y,y')) = \max \{d_{\dig} (x,y), d_{\dig'} (x',y')\}.
\]
\end{proposition}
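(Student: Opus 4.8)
The plan is to prove the formula by establishing the inequality $d_{\dig\strongp\dig'}((x,x'),(y,y'))\le\max\{d_\dig(x,y),d_{\dig'}(x',y')\}$ and its reverse separately. The first thing I would do is read off from $E(\dig\strongp\dig')=E(\dig\cp\dig')\cup E(\dig\directp\dig')$ exactly what a single step of a dipath in the strong product can be: there is an arc from $(u,u')$ to $(w,w')$ precisely when, in each coordinate, the vertex either stays fixed or advances along an arc of the corresponding factor, and the two coordinates are not both fixed (which would force a forbidden loop). Consequently, every dipath in $\dig\strongp\dig'$ projects to a walk in $\dig$ and a walk in $\dig'$, a ``stay'' corresponding to a repeated vertex, and each step advances at least one of the two projected walks by one arc.

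For the lower bound I would take an arbitrary dipath from $(x,x')$ to $(y,y')$, say of length $k$, and let $a$ and $b$ be the numbers of advancing steps in the first and second coordinates. Deleting repeated vertices turns the first coordinate into a walk from $x$ to $y$ of length $a$, so $a\ge d_\dig(x,y)$, and similarly $b\ge d_{\dig'}(x',y')$. Since the advancing steps in either coordinate are among the $k$ steps of the dipath, $k\ge a$ and $k\ge b$, hence $k\ge\max\{d_\dig(x,y),d_{\dig'}(x',y')\}$; minimizing over all dipaths gives the desired lower bound.

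For the upper bound I would exhibit a dipath attaining the maximum. Assuming without loss of generality that $a=d_\dig(x,y)\le b=d_{\dig'}(x',y')$, I would fix shortest dipaths $x=u_0,\dots,u_a=y$ in $\dig$ and $x'=w_0,\dots,w_b=y'$ in $\dig'$ and take the vertex sequence $(u_{\min(i,a)},w_i)$ for $i=0,\dots,b$. For $i+1\le a$ both coordinates advance (an arc of $\dig\directp\dig'$), while for $i\ge a$ only the second advances (an arc of $\dig\cp\dig'$); every step advances at least one coordinate, so no loop occurs and this is a genuine dipath of length $b=\max\{a,b\}$, completing the matching bound and hence the equality.

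I expect the only delicate point to be the bookkeeping in the lower bound: one must check that the advancing steps in a coordinate genuinely assemble into a walk of the stated length, so that the distance inequality applies, and that they are counted among the $k$ steps of the product dipath. The ``not both fixed'' condition is exactly what guarantees that each step feeds at least one coordinate, which is what prevents $k$ from dropping below the maximum; the upper-bound construction is simply the dual of this observation and should present no real difficulty.
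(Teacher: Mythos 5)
Your proof is correct and complete. Note, however, that the paper contains no proof of this statement to compare against: Proposition \ref{diststrong} is quoted verbatim from Hammack \cite[Proposition 10.2.1]{H18}, and the authors use it only to observe that their Kronecker-product machinery does not apply to $\D(\dig \strongp \dig')$. Your argument is the standard one for this fact: the lower bound by projecting an arbitrary dipath onto each factor (each step of the strong product either fixes or advances each coordinate, never fixing both, so each projection collapses to a walk whose length is at most the length of the dipath), and the upper bound by the explicit synchronized dipath $(u_{\min(i,a)},w_i)$, $i=0,\dots,b$, built from shortest dipaths in the factors. Two small points you handle correctly but are worth making explicit: the collapsed projection is only a walk (it may revisit vertices non-consecutively), which is harmless since $d_\dig(x,y)\le a$ holds for any walk of length $a$ from $x$ to $y$; and the assumption $a\le b$ is a genuine WLOG because the construction is symmetric in the two factors. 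With $a\le b$, every step of your constructed sequence advances the second coordinate, so each step is an arc of $\dig\directp\dig'$ (when $i<a$) or of $\dig\cp\dig'$ (when $a\le i<b$), and since the $w_i$ are distinct the sequence is a genuine dipath of length $b=\max\{d_\dig(x,y),d_{\dig'}(x',y')\}$.
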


Given this formula for distance, the methods developed here do not seem to be applicable to determining the spectra of distance matrices of strong products of digraphs.
%

\section{Directed strongly regular graphs}\label{sSRD}
In this section we discuss directed strongly regular graphs (DSRGs), a special class of digraphs all of which have diameter at most two and are {\em regular}, meaning all vertices have in-degree and out-degree equal to some common value $k$; such a digraph is also called {\em $k$-regular}. A DSRG requires additional properties, and it is noteworthy that a DSRG  has exactly three distinct eigenvalues; we apply our Cartesian product formula to a DSRG to produce an infinite family of graphs with three distinct eigenvalues.

Before defining a DSRG, we first prove a more general result about $k$-regular digraphs with diameter at most two, which is analogous to a result for graphs.  Note that any such digraph of order $n$ is transmission regular with transmission $2n-2-k$.  


\begin{proposition}\label{Diam2Prop}
Let $\dig$ be a $k$-regular digraph  of order $n$ and diameter at most $2$ with $\spec_\A(\dig)=\{k,\alpha_2,\dots,\alpha_{n}\}$.  Then $\spec_\D(\dig)=\{2n-2-k,-(\alpha_2+2),\dots,-(\alpha_{n}+2)\}$, $\bone_n$ is an eigenvector  of $\D(\dig)$ for eigenvalue $2n-2-k$,  and if $\bv_i$ is an eigenvector of $\A(\dig)$ for $\alpha_i\ne k$, then $\bv_i$ is an eigenvector of $\D(\dig)$  for $-2-\alpha_i$.  Furthermore, $\gmult_{\D(\dig)}(-\alpha_i-2)=\gmult_{\A(\dig)}(\alpha_i)$ for $\alpha_i\ne k$ and    $\gmult_{\D(\dig)}(-k-2)=\gmult_{\A(\dig)}(k)-1$. \end{proposition}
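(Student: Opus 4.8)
The plan is to recognize that the distance matrix of such a digraph coincides with the matrix $B$ analyzed in Proposition \ref{o:adj-comp}, so that most of the statement transfers immediately; the only genuinely new ingredient will be to exploit in-regularity (not just out-regularity) in order to obtain the clean eigenvector description with no correction term.

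First I would establish the matrix identity $\D(\dig)=2\J_n-2\I_n-\A(\dig)$. Since $\dig$ is $k$-out-regular, each vertex $v_i$ has exactly $k$ out-neighbors, all at distance $1$; and since $\diam\dig\le 2$, every remaining vertex $v_j\ne v_i$ lies at distance exactly $2$. Hence $\D(\dig)$ has $0$ on the diagonal, $1$ at the arcs of $\dig$, and $2$ at the off-diagonal non-arcs, so $\D(\dig)=\A(\dig)+2\A(\overline\dig)$. Substituting $\A(\overline\dig)=\J_n-\I_n-\A(\dig)$ yields the claimed identity, which is exactly the matrix $B$ from Proposition \ref{o:adj-comp} with $r=k$.

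I would then invoke Proposition \ref{o:adj-comp} directly to read off $\spec_\D(\dig)=\spec(B)=\{2n-2-k,-(\alpha_2+2),\dots,-(\alpha_n+2)\}$, the fact that $\bone_n$ is an eigenvector for $2n-2-k$, and the two geometric-multiplicity formulas $\gmult_{\D(\dig)}(-\alpha_i-2)=\gmult_{\A(\dig)}(\alpha_i)$ for $\alpha_i\ne k$ and $\gmult_{\D(\dig)}(-k-2)=\gmult_{\A(\dig)}(k)-1$.

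The last task is to sharpen the eigenvector statement: Proposition \ref{o:adj-comp} only guarantees the eigenvector $\bv_i+\beta_i\bone_n$ for $-\alpha_i-2$, whereas here $\bv_i$ itself is claimed. This is precisely where I would use full $k$-regularity: in-regularity gives $\bone_n^T\A(\dig)=k\bone_n^T$, so for a right eigenvector with $\A(\dig)\bv_i=\alpha_i\bv_i$ and $\alpha_i\ne k$, the identity $k\bone_n^T\bv_i=\bone_n^T\A(\dig)\bv_i=\alpha_i\bone_n^T\bv_i$ forces $\bone_n^T\bv_i=0$. Therefore $\J_n\bv_i=\bone_n(\bone_n^T\bv_i)=\boldzero$, and
\[\D(\dig)\bv_i=(2\J_n-2\I_n-\A(\dig))\bv_i=\boldzero-2\bv_i-\alpha_i\bv_i=(-\alpha_i-2)\bv_i,\]
so $\bv_i$ is an eigenvector of $\D(\dig)$ for $-\alpha_i-2$ (equivalently, $\beta_i=0$ in Proposition \ref{o:adj-comp} because $\bv_i^T\bone_n=0$). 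The only point requiring care is to invoke in-regularity exactly at this step; without it the orthogonality $\bv_i^T\bone_n=0$ can fail and the correction term would survive. I do not expect any serious obstacle beyond setting up the matrix identity correctly.
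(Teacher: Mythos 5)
Your proposal is correct and takes essentially the same route as the paper's proof: both identify $\D(\dig)=\A(\dig)+2\A(\overline{\dig})=2\J_n-2\I_n-\A(\dig)$, invoke Proposition~\ref{o:adj-comp}, and use in-regularity to force $\bone_n^T\bv_i=0$ for $\alpha_i\ne k$ (the paper derives this from $\A(\dig)\J_n=\J_n\A(\dig)=k\J_n$, which is the same computation as your left-eigenvector argument $\bone_n^T\A(\dig)=k\bone_n^T$). The only cosmetic difference is that the paper verifies $\gmult_{\D(\dig)}(-k-2)=\gmult_{\A(\dig)}(k)-1$ separately via an orthogonal basis of $ES_{\A(\dig)}(k)$ containing $\bone_n$, whereas you read it off directly from Proposition~\ref{o:adj-comp}, which is legitimate since it is stated there.
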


\begin{proof}
Because $\D(\dig)=\A(\dig)+2\A(\overline{\dig})$, all the statements except the geometric multiplicity of eigenvalue $-k-2$ of $\D(\dig)$ will follow from Proposition \ref{o:adj-comp} once we show that $\bone^T\bv_i=0$  for $\alpha_i\ne k$.
Since $\dig$ is $k$-regular, 
$\A(\dig)\J_n=k\J_n=\J_n \A(\dig)$. Let $\bone_n^T\bv_i=c_i$,  so $\J_n\bv_i=c_i\bone_n$.  Then\vspace{-3pt}
\[c_ik\bone_n=c_i\A(\dig)\bone_n=\A(\dig)\J_n\bv_i=\J_n\A(\dig)\bv_i=\J_n\alpha_i\bv_i=c_i\alpha_i\bone_n.\vspace{-5pt}\]
Since $k\ne \alpha_i$, this implies $c_i=0$.  To see that $\gmult_{\D(\dig)}(-k-2)=\gmult_{\A(\dig)}(k)-1$,  choose an orthogonal basis of eigenvectors for $ES_{\A(\dig)}(k)$ that includes $\bone_n$.   
\end{proof}


Strongly regular graphs are a well studied family of graphs which are of particular interest because they have exactly three eigenvalues. Duval \cite{D88} defined a {\em directed strongly regular graph}, here denoted by $\dig(n,k,s,a,c)$, to be a digraph $\dig$ of order $n$ such that \vspace{-3pt}
\[\A(\dig)^2=s\I_n+a \A(\dig) + c (\J_n-\I_n-\A(\dig)) \text{  and  } \A(\dig)\J_n=\J_n\A(\dig)=k\J_n. \vspace{-3pt}\] 
Such a digraph is $k$-regular and each vertex is incident with $s$   doubly directed arcs. The number of directed paths of length two from a vertex $v$ to a vertex $u$ is $a$ if $(v,u)$ is an arc in $\dig$ and $c$ if $(v,u)$ is not an arc in $\dig$. Duval originally used the notation $\dig(n,k,\mu,\lambda,t)$ where $\lam=a$, $\mu=c$, and $t=s$ in our notation. We use $s$ rather than $t$ to follow the distance matrix literature in using $t$ for transmission. Both usages $G(n,k,a,c)$ and $G(n,k,\lambda,\mu)$ appear in the literature for strongly regular graphs, and we avoid using $\lambda$ since it has been used throughout this paper as an eigenvalue.  The reordering $\dig(n,k,t,\lambda,\mu)$ of Duval's original notation $\dig(n,k,\mu,\lambda,t)$ has become popular in more recent literature since it more closely follows the standard ordering for strongly regular graphs.

Duval computed the next formula for the eigenvalues of $\A(\dig(n,k,s,a,c))$. 

\begin{theorem}{\rm \cite{D88}}\label{t:DSRG-specA}
Let $\dig=\dig(n,k,s,a,c)$. The spectrum of $\A(\dig)$ consists of the three   eigenvalues 
\[\theta_1=k,\ \theta_2=\frac{1}{2}\left(a -c + \sqrt{(c-a)^2+4(s-c)} \right)\!,\mbox{ and } \theta_3=\frac{1}{2}\left( a -c - \sqrt{(c-a)^2+4(s-c)} \right)\vspace{-3pt} \]
with multiplicities \vspace{-3pt}
\[\mult(\theta_1)=1,\ \mult(\theta_2)=-\frac{k+\theta_3(n-1)}{\theta_2-\theta_3}, \text{ and } \mult(\theta_3)=\frac{k+\theta_2(n-1)}{\theta_2-\theta_3}.\vspace{-5pt} \]
\end{theorem}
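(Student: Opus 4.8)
The plan is to exploit the two defining relations of a DSRG together with the fact that $\A(\dig)$ commutes with $\J_n$. Writing $A=\A(\dig)$, the relation $A\J_n=\J_n A=k\J_n$ shows that $\operatorname{span}(\bone_n)$ and $\ker\J_n=\{\bx:\J_n\bx=\boldzero\}$ are both $A$-invariant (if $\J_n\bx=\boldzero$ then $\J_n(A\bx)=kJ_n\bx=\boldzero$), and $\C^n=\operatorname{span}(\bone_n)\oplus\ker\J_n$ with $\dim\ker\J_n=n-1$. On $\operatorname{span}(\bone_n)$ we have $A\bone_n=k\bone_n$, so $\theta_1=k$ is an eigenvalue; since $k$ is the spectral radius of the irreducible nonnegative matrix $A$, this eigenvalue is simple by Perron--Frobenius, giving $\mult(\theta_1)=1$ and separating $k$ from the eigenvalues living on $\ker\J_n$.

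First I would analyze $A$ on $\ker\J_n$. For $\bx\in\ker\J_n$ the defining equation $A^2=s\I_n+aA+c(\J_n-\I_n-A)$ collapses, since $\J_n\bx=\boldzero$, to $A^2\bx=(s-c)\bx+(a-c)A\bx$. Hence the restriction $A|_{\ker\J_n}$ is annihilated by $p(x)=x^2-(a-c)x-(s-c)$, so each of its eigenvalues is a root of $p$. The roots are $\tfrac{1}{2}\bigl((a-c)\pm\sqrt{(a-c)^2+4(s-c)}\bigr)$, which, using $(a-c)^2=(c-a)^2$, are precisely $\theta_2$ and $\theta_3$. Provided the discriminant $(c-a)^2+4(s-c)$ is nonzero, $p$ has distinct roots, so $A|_{\ker\J_n}$ is diagonalizable with every eigenvalue in $\{\theta_2,\theta_3\}$, establishing that $\spec_\A(\dig)=\{k,\theta_2,\theta_3\}$.

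It then remains to pin down the multiplicities $m_2=\mult(\theta_2)$ and $m_3=\mult(\theta_3)$, and I would do this by setting up two linear equations. Counting dimensions on $\ker\J_n$ gives $m_2+m_3=n-1$. Taking the trace of $A$ gives the second: since $\dig$ has no loops, every diagonal entry of $A$ is $0$, so $\tr(A)=0$, and as the trace equals the sum of the eigenvalues with algebraic multiplicity, $k+m_2\theta_2+m_3\theta_3=0$. Eliminating via $m_2=n-1-m_3$ yields $m_3=\frac{k+(n-1)\theta_2}{\theta_2-\theta_3}$ and $m_2=-\frac{k+(n-1)\theta_3}{\theta_2-\theta_3}$, matching the claimed formulas.

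The main obstacle is less the computation than the structural justifications: guaranteeing that $k$ is a simple eigenvalue disjoint from $\{\theta_2,\theta_3\}$ (obtained from Perron--Frobenius, using that a genuine DSRG is strongly connected, hence $A$ is irreducible and $k$ strictly dominates), and ensuring the discriminant is nonzero so that $\theta_2\neq\theta_3$ and the division in the multiplicity formulas is legitimate. The decomposition $\C^n=\operatorname{span}(\bone_n)\oplus\ker\J_n$ into $A$-invariant subspaces is what lets the non-symmetric adjacency matrix be handled cleanly despite the absence of an orthonormal eigenbasis.
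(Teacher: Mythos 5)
The paper gives no proof of this theorem at all: it is imported verbatim from Duval \cite{D88}, so there is no internal argument to compare yours against. Your proof is correct and self-contained, and it is the classical strongly-regular-graph computation adapted to the directed setting: the commutation $\A(\dig)\J_n=\J_n\A(\dig)=k\J_n$ yields the $\A(\dig)$-invariant splitting $\C^n=\operatorname{span}(\bone_n)\oplus\ker\J_n$, the defining identity collapses on $\ker\J_n$ to the quadratic $x^2-(a-c)x-(s-c)$ whose roots are $\theta_2,\theta_3$, and the multiplicities follow from $m_2+m_3=n-1$ together with $\tr(\A(\dig))=0$; your algebra reproducing the stated formulas checks out. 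The two caveats you flag are genuine but dispatchable. For simplicity of $k$: strong connectivity (hence irreducibility and Perron--Frobenius) is supplied by the paper's standing observation that every DSRG has diameter at most two; alternatively, you can avoid Perron--Frobenius entirely by applying the defining identity to $\bone_n$ to get $k^2=s+ak+c(n-1-k)$ and noting that if $k$ were also a root of $x^2-(a-c)x-(s-c)$, subtracting the two relations would force $cn=0$, impossible when $c>0$. For the discriminant: Duval proved (and the paper quotes, in Section 5) that $(c-a)^2+4(s-c)$ is a positive perfect square for every DSRG that is neither a graph nor a doubly regular tournament $\dig(2k+1,k,0,a,a+1)$, while in the tournament case it is strictly negative, so $\theta_2$ and $\theta_3$ are a nonreal conjugate pair; in either case $\theta_2\neq\theta_3$ and the division in the multiplicity formulas is legitimate (and your diagonalizability conclusion on $\ker\J_n$, over $\C$, is unaffected by the eigenvalues being nonreal). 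With those two points pinned down as above, your argument is a complete proof of the cited theorem.
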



Duval's theorem and Proposition \ref{Diam2Prop} determine the $\D$-spectrum of a direct strongly regular graph.

\begin{corollary}
Let $\dig=\dig(n,k,s,a,c)$. The spectrum of $\D(\dig)$ consists of the three   eigenvalues
{\scriptsize\[\partial_1=2n-2-k,\, \partial_2=-2-\frac{1}{2}\left(a -c + \sqrt{(c-a)^2+4(s-c)} \right)\!,\,\mbox{and } \partial_3=-2-\frac{1}{2}\left( a -c - \sqrt{(c-a)^2+4(s-c)} \right) \]}
with multiplicities $\mult(\partial_i)=\mult(\theta_i)$ for $i=1,2,3$.
\end{corollary}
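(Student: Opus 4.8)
The plan is to obtain this corollary as a direct specialization, feeding Duval's adjacency spectrum (Theorem \ref{t:DSRG-specA}) into the spectral correspondence of Proposition \ref{Diam2Prop}. First I would verify that a DSRG $\dig=\dig(n,k,s,a,c)$ meets the hypotheses of Proposition \ref{Diam2Prop}: the defining condition $\A(\dig)\J_n=\J_n\A(\dig)=k\J_n$ makes $\dig$ a $k$-regular digraph, and, as recorded at the start of this section, every DSRG has diameter at most two. Hence Proposition \ref{Diam2Prop} applies with no modification.

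Next I would invoke Theorem \ref{t:DSRG-specA}, which gives $\spec_{\A}(\dig)=\{\theta_1=k,\theta_2,\theta_3\}$ with $\theta_1$ simple and with the stated multiplicities $\mult(\theta_2)$ and $\mult(\theta_3)$. Proposition \ref{Diam2Prop} asserts the multiset identity $\dspec(\dig)=\{2n-2-k\}\cup\{-(\alpha+2):\alpha\in\spec_{\A}(\dig)\setminus\{k\}\}$: the single copy of the spectral radius $k$ (with eigenvector $\bone_n$) is sent to $2n-2-k$, while every remaining adjacency eigenvalue $\alpha$ is sent to $-(\alpha+2)$ under the affine map $\alpha\mapsto-(\alpha+2)$.

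Finally I would carry out the multiplicity bookkeeping. Since $\theta_1=k$ is simple, exactly one copy of $k$ is consumed by the spectral radius term, yielding $\partial_1=2n-2-k$ with $\mult(\partial_1)=1=\mult(\theta_1)$. The affine map $\alpha\mapsto-(\alpha+2)$ is a bijection, so it preserves multiplicities on the remaining eigenvalues: the $\mult(\theta_2)$ copies of $\theta_2$ become $\mult(\theta_2)$ copies of $\partial_2=-(\theta_2+2)$, and the $\mult(\theta_3)$ copies of $\theta_3$ become $\mult(\theta_3)$ copies of $\partial_3=-(\theta_3+2)$. Substituting the explicit expressions for $\theta_2$ and $\theta_3$ from Theorem \ref{t:DSRG-specA} then produces the displayed formulas $\partial_2=-2-\tfrac{1}{2}(a-c+\sqrt{(c-a)^2+4(s-c)})$ and $\partial_3=-2-\tfrac{1}{2}(a-c-\sqrt{(c-a)^2+4(s-c)})$.

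I do not expect a serious obstacle, as the result is an immediate combination of the two earlier statements; the only points deserving a moment's care are that the spectral radius $k$ must be treated separately from the map $\alpha\mapsto-(\alpha+2)$ (so that $\partial_1=2n-2-k$ rather than $-(k+2)$), and that the three listed eigenvalues are genuinely distinct. The latter holds exactly when $\theta_2\neq\theta_3$, that is, when the discriminant $(c-a)^2+4(s-c)$ is positive, which is the case for a DSRG possessing three distinct adjacency eigenvalues.
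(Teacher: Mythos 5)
Your proposal is correct and takes essentially the same route as the paper, which gives no separate argument but states the corollary as an immediate consequence of Duval's Theorem \ref{t:DSRG-specA} combined with Proposition \ref{Diam2Prop} --- exactly the specialization you carry out. Your added bookkeeping (treating the simple eigenvalue $\theta_1=k$ separately so that it maps to $2n-2-k$ rather than $-(k+2)$, and noting distinctness via the positive discriminant) just makes explicit what the paper leaves implicit.
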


In \cite{J03}, J\o rgensen proved that the adjacency matrix of every DSRG is diagonalizable and thus has a basis of eigenvectors. By Proposition \ref{Diam2Prop}, this property is also true of the distance matrix of a DSRG.  Note that this property does not hold for all transmission regular digraphs of diameter at most 2:   Figure \ref{fig:TRegNoEvec} is an example of a digraph $\dig$ that does not have a basis of eigenvectors; note that the digraph obtained from $\dig$ by reversing every arc is not transmission regular,  whereas reversing every arc in a DSRG produces a DSRG.

Cartesian products provide a method of forming digraphs on a large number of vertices with few distinct distance eigenvalues. Applying Theorem \ref{thm:TRcartprod-dig_new_new} to transmission regular digraphs $\dig$ on $n$ vertices and $\dig '$ on $n'$ vertices, we see that $\dig\cp\dig '$ has $nn'$ vertices but at most $n+n'$ distinct eigenvalues. The number of distinct eigenvalues can be much lower if the spectra of $\dig$ and $\dig'$ share some common values or if they contain $0$ as an eigenvalue. \

\begin{proposition}\label{p:cp-DSRG} Suppose $\dig$ is a transmission regular digraph of order $n$ with $\spec_\D(\dig)=\{t=\partial_1, \partial_2^{(m)}, 0^{(n-1-m)}\}$.
 Define $\dig_{\ell}=\dig\cp\dots\cp\dig$, the Cartesian product of $\ell$ copies of $\dig$. Then the order of $\dig_\ell$ is $n^\ell$ and  $\spec_{\D}(\dig_{\ell})=\{\ell t\,n^{\ell-1},\left(\partial_2\,n^{\ell-1}\right)^{(m\ell)},0^{(n^\ell-1-m\ell)}\}$.
\end{proposition}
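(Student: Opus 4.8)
The plan is to induct on $\ell$, applying Theorem~\ref{thm:TRcartprod-dig_new_new} once at each step. The base case $\ell=1$ is precisely the hypothesis placed on $\dig$. For the inductive step I would use associativity of the Cartesian product to write $\dig_{\ell+1}=\dig_\ell\cp\dig$, exhibiting $\dig_{\ell+1}$ as a product of the two transmission regular digraphs $\dig_\ell$ and $\dig$, to which Theorem~\ref{thm:TRcartprod-dig_new_new} then applies.

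Before applying the theorem I need its two standing hypotheses for the factor $\dig_\ell$. The Cartesian product of transmission regular digraphs is again transmission regular, with transmission $nt'+n't$ (as recorded in Section~\ref{scartprod}); so by induction $\dig_\ell$ is transmission regular of order $n^\ell$, and its transmission is $t_\ell=\ell t\,n^{\ell-1}$, which also equals its distance spectral radius $\partial_1$. The recursion $t_{\ell+1}=n^\ell\cdot t+n\cdot \ell t\,n^{\ell-1}=(\ell+1)t\,n^\ell$ confirms this formula, and $n^{\ell+1}=n^\ell\cdot n$ gives the order.

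The substance of the argument is the multiplicity bookkeeping. Taking the first factor to be $\dig_\ell$ (order $n^\ell$, transmission $\ell t\,n^{\ell-1}$, nontrivial distance eigenvalues $(\partial_2 n^{\ell-1})^{(m\ell)}$ and $0^{(n^\ell-1-m\ell)}$) and the second to be $\dig$ (order $n$, transmission $t$, nontrivial distance eigenvalues $\partial_2^{(m)}$ and $0^{(n-1-m)}$), Theorem~\ref{thm:TRcartprod-dig_new_new} produces: the spectral radius $n^\ell t+n\cdot\ell t\,n^{\ell-1}=(\ell+1)t\,n^\ell$; the nontrivial eigenvalues of $\dig_\ell$ scaled by $n$, namely $(\partial_2 n^{\ell})^{(m\ell)}$ and $0^{(n^\ell-1-m\ell)}$; the nontrivial eigenvalues of $\dig$ scaled by $n^\ell$, namely $(\partial_2 n^{\ell})^{(m)}$ and $0^{(n-1-m)}$; and $0^{(n^\ell-1)(n-1)}$. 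Collecting the copies of $\partial_2 n^\ell$ gives multiplicity $m\ell+m=m(\ell+1)$, and collecting all copies of $0$ gives $(n^\ell-1-m\ell)+(n-1-m)+(n^\ell-1)(n-1)=n^{\ell+1}-1-m(\ell+1)$. This is exactly the asserted spectrum of $\dig_{\ell+1}$, closing the induction.

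I expect the $0$-eigenvalue tally to be the only place demanding care, since zeros arrive from all three nontrivial blocks; the check reduces to expanding $(n^\ell-1)(n-1)=n^{\ell+1}-n^\ell-n+1$ and observing that the $n^\ell$ and $n$ terms cancel against the other two summands. There is no analytic difficulty: the lone conceptual requirement is that both factors at each stage be transmission regular, which the induction hypothesis guarantees.
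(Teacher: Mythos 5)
Your proposal is correct and takes essentially the same route as the paper: both arguments induct on $\ell$, use associativity to write $\dig_{\ell+1}=\dig_\ell\cp\dig$, apply Theorem~\ref{thm:TRcartprod-dig_new_new} at each step, and perform the identical multiplicity bookkeeping, including the cancellation $(n^\ell-1-m\ell)+(n-1-m)+(n^\ell-1)(n-1)=n^{\ell+1}-1-m(\ell+1)$. The only cosmetic difference is that the paper starts the induction at $\ell=2$ while you start at $\ell=1$, and you make explicit the (correct) observation that transmission regularity of $\dig_\ell$ is preserved along the induction, which the paper leaves implicit.
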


\begin{proof}
We prove the claim by induction. When $\ell=2$, Theorem \ref{thm:TRcartprod-dig_new_new} implies $\spec_{\D}(\dig_{2})=\{2nt,\left(\partial_2\,n\right)^{(2m)},$ $0^{(n^2-1-2m)}\}$. 
Now assume $\spec_{\D}(\dig_{\ell})=\{\ell t\,n^{\ell-1},\left(\partial_2\,n^{\ell-1}\right)^{(m\ell)},0^{(n^\ell-1-m\ell)}\}$.  Since $\dig_{\ell+1}=\dig_\ell\cp \dig$, applying Theorem \ref{thm:TRcartprod-dig_new_new} again we get
\[\begin{aligned}
\spec_{\D}(\dig_{\ell+1})&=\{n\,\ell tn^{\ell-1}+n^{\ell}t, \left(n\partial_2\,n^{\ell-1}\right)^{(m\ell)},0^{(n^\ell-1-m\ell)},\left(n^\ell\partial_2\right)^{(m)},0^{(n-1-m)},0^{(n^\ell-1)(n-1)}\}\\
&=\{t(\ell+1)n^{\ell}, \left(\partial_2\,n^{\ell}\right)^{(m(\ell+1))},0^{\left(n^{\ell+1}-1-m(\ell+1)\right)}\}.
\end{aligned}\]
\end{proof}

\begin{example}{\rm
The DSRG $\dig=\dig(8,4,3,1,3)$ has spectrum $\spec_{\D}(\dig)=\{10,-2^{(5)},0^{(2)}\}$. Therefore this digraph allows us to construct examples of arbitrarily large digraphs with only three distinct eigenvalues.   By Proposition \ref{p:cp-DSRG}, $\dig_\ell$ has order $8^\ell$ and   $\spec_{\D}(\dig_{\ell})=\{10\ell(8^{\ell-1}),\left(-2(8^{\ell-1})\right)^{(5\ell)},0^{(8^{\ell}-1-5\ell)}\}$.}
\end{example}

\begin{figure}[h!]
    \centering
\scalebox{.7}{    \begin{tikzpicture}
\definecolor{cv0}{rgb}{0.0,0.0,0.0}
\definecolor{cfv0}{rgb}{1.0,1.0,1.0}
\definecolor{clv0}{rgb}{0.0,0.0,0.0}
\definecolor{cv1}{rgb}{0.0,0.0,0.0}
\definecolor{cfv1}{rgb}{1.0,1.0,1.0}
\definecolor{clv1}{rgb}{0.0,0.0,0.0}
\definecolor{cv2}{rgb}{0.0,0.0,0.0}
\definecolor{cfv2}{rgb}{1.0,1.0,1.0}
\definecolor{clv2}{rgb}{0.0,0.0,0.0}
\definecolor{cv3}{rgb}{0.0,0.0,0.0}
\definecolor{cfv3}{rgb}{1.0,1.0,1.0}
\definecolor{clv3}{rgb}{0.0,0.0,0.0}
\definecolor{cv4}{rgb}{0.0,0.0,0.0}
\definecolor{cfv4}{rgb}{1.0,1.0,1.0}
\definecolor{clv4}{rgb}{0.0,0.0,0.0}
\definecolor{cv5}{rgb}{0.0,0.0,0.0}
\definecolor{cfv5}{rgb}{1.0,1.0,1.0}
\definecolor{clv5}{rgb}{0.0,0.0,0.0}
\definecolor{cv6}{rgb}{0.0,0.0,0.0}
\definecolor{cfv6}{rgb}{1.0,1.0,1.0}
\definecolor{clv6}{rgb}{0.0,0.0,0.0}
\definecolor{cv7}{rgb}{0.0,0.0,0.0}
\definecolor{cfv7}{rgb}{1.0,1.0,1.0}
\definecolor{clv7}{rgb}{0.0,0.0,0.0}
\definecolor{cv0v1}{rgb}{0.0,0.0,0.0}
\definecolor{cv0v2}{rgb}{0.0,0.0,0.0}
\definecolor{cv0v5}{rgb}{0.0,0.0,0.0}
\definecolor{cv0v6}{rgb}{0.0,0.0,0.0}
\definecolor{cv1v2}{rgb}{0.0,0.0,0.0}
\definecolor{cv1v3}{rgb}{0.0,0.0,0.0}
\definecolor{cv1v4}{rgb}{0.0,0.0,0.0}
\definecolor{cv1v5}{rgb}{0.0,0.0,0.0}
\definecolor{cv2v0}{rgb}{0.0,0.0,0.0}
\definecolor{cv2v3}{rgb}{0.0,0.0,0.0}
\definecolor{cv2v4}{rgb}{0.0,0.0,0.0}
\definecolor{cv2v7}{rgb}{0.0,0.0,0.0}
\definecolor{cv3v0}{rgb}{0.0,0.0,0.0}
\definecolor{cv3v1}{rgb}{0.0,0.0,0.0}
\definecolor{cv3v6}{rgb}{0.0,0.0,0.0}
\definecolor{cv3v7}{rgb}{0.0,0.0,0.0}
\definecolor{cv4v1}{rgb}{0.0,0.0,0.0}
\definecolor{cv4v2}{rgb}{0.0,0.0,0.0}
\definecolor{cv4v5}{rgb}{0.0,0.0,0.0}
\definecolor{cv4v6}{rgb}{0.0,0.0,0.0}
\definecolor{cv5v0}{rgb}{0.0,0.0,0.0}
\definecolor{cv5v1}{rgb}{0.0,0.0,0.0}
\definecolor{cv5v6}{rgb}{0.0,0.0,0.0}
\definecolor{cv5v7}{rgb}{0.0,0.0,0.0}
\definecolor{cv6v0}{rgb}{0.0,0.0,0.0}
\definecolor{cv6v3}{rgb}{0.0,0.0,0.0}
\definecolor{cv6v4}{rgb}{0.0,0.0,0.0}
\definecolor{cv6v7}{rgb}{0.0,0.0,0.0}
\definecolor{cv7v2}{rgb}{0.0,0.0,0.0}
\definecolor{cv7v3}{rgb}{0.0,0.0,0.0}
\definecolor{cv7v4}{rgb}{0.0,0.0,0.0}
\definecolor{cv7v5}{rgb}{0.0,0.0,0.0}
\Vertex[style={minimum
size=1.0cm,draw=cv0,fill=cfv0,text=clv0,shape=circle},LabelOut=false,L=\hbox{$1$},x=3cm,y=4.5cm]{v0}
\Vertex[style={minimum
size=1.0cm,draw=cv1,fill=cfv1,text=clv1,shape=circle},LabelOut=false,L=\hbox{$2$},x=4.5cm,y=3cm]{v1}
\Vertex[style={minimum
size=1.0cm,draw=cv2,fill=cfv2,text=clv2,shape=circle},LabelOut=false,L=\hbox{$3$},x=4.5cm,y=1.5cm]{v2}
\Vertex[style={minimum
size=1.0cm,draw=cv3,fill=cfv3,text=clv3,shape=circle},LabelOut=false,L=\hbox{$4$},x=3cm,y=0cm]{v3}
\Vertex[style={minimum
size=1.0cm,draw=cv4,fill=cfv4,text=clv4,shape=circle},LabelOut=false,L=\hbox{$5$},x=1.5cm,y=0cm]{v4}
\Vertex[style={minimum
size=1.0cm,draw=cv5,fill=cfv5,text=clv5,shape=circle},LabelOut=false,L=\hbox{$6$},x=0cm,y=1.5cm]{v5}
\Vertex[style={minimum
size=1.0cm,draw=cv6,fill=cfv6,text=clv6,shape=circle},LabelOut=false,L=\hbox{$7$},x=0cm,y=3cm]{v6}
\Vertex[style={minimum
size=1.0cm,draw=cv7,fill=cfv7,text=clv7,shape=circle},LabelOut=false,L=\hbox{$8$},x=1.5cm,y=4.5cm]{v7}
\Edge[lw=0.1cm,style={post, color=cv0v1,},](v0)(v1)
\Edge[lw=0.1cm,style={color=cv0v2,},](v0)(v2)
\Edge[lw=0.1cm,style={color=cv0v2,},](v0)(v5)
\Edge[lw=0.1cm,style={color=cv0v2,},](v0)(v6)
\Edge[lw=0.1cm,style={post,color=cv1v2,},](v1)(v2)
\Edge[lw=0.1cm,style={color=cv0v2,},](v1)(v3)
\Edge[lw=0.1cm,style={color=cv0v2,},](v1)(v4)
\Edge[lw=0.1cm,style={color=cv0v2,},](v1)(v5)
\Edge[lw=0.1cm,style={post,color=cv2v3,},](v2)(v3)
\Edge[lw=0.1cm,style={color=cv0v2,},](v2)(v4)
\Edge[lw=0.1cm,style={color=cv0v2,},](v2)(v7)
\Edge[lw=0.1cm,style={post,,color=cv3v0,},](v3)(v0)
\Edge[lw=0.1cm,style={color=cv0v2,},](v3)(v6)
\Edge[lw=0.1cm,style={color=cv0v2,},](v3)(v7)
\Edge[lw=0.1cm,style={post,color=cv4v5,},](v4)(v5)
\Edge[lw=0.1cm,style={color=cv0v2,},](v4)(v6)
\Edge[lw=0.1cm,style={post,color=cv5v6,},](v5)(v6)
\Edge[lw=0.1cm,style={color=cv0v2,},](v5)(v7)
\Edge[lw=0.1cm,style={post,color=cv6v7,},](v6)(v7)
\Edge[lw=0.1cm,style={post,color=cv7v4,},](v7)(v4)
%
\end{tikzpicture}}
    \caption{$\dig(8,4,3,1,3)$}
    \label{fig:my_label}
\end{figure}

Because directed strongly regular graphs are transmission regular,   the $\DL$ and $\DQ$ eigenvalues of directed strongly regular graphs are immediate.

\begin{corollary}
Let $\dig=\dig(n,k,s,a,c)$. The spectrum of $\DL(\dig)$ consists of the three   eigenvalues
{\scriptsize \[\partial^L_1=0,\ \partial^L_2=2n-k+\frac{1}{2}\left( a -c + \sqrt{(c-a)^2+4(s-c)} \right)\!, \mbox{ and }\partial^L_3=2n-k+\frac{1}{2}\left( a -c - \sqrt{(c-a)^2+4(s-c)} \right) \]}
with multiplicities $\mult(\partial^L_i)=\mult(\theta_i)$ for $i=1,2,3$.
The spectrum of $\DQ(\dig)$ consists of the three   eigenvalues
{\scriptsize\[ \partial^Q_1=4n-4-2k,\ \partial^Q_2=2n-k-4-\frac{1}{2}\left( a -c + \sqrt{(c-a)^2+4(s-c)} \right)\!, \mbox{ and }\partial^Q_3=2n-k-4-\frac{1}{2}\left( a -c - \sqrt{(c-a)^2+4(s-c)} \right) \]}
with multiplicities $\mult(\partial^Q_i)=\mult(\theta_i)$ for $i=1,2,3$.
\end{corollary}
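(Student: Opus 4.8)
The plan is to exploit the fact that a DSRG, being $k$-regular of diameter at most two, is transmission regular, so that the distance Laplacian and distance signless Laplacian are merely scalar shifts of $\mp\D(\dig)$. First I would invoke the observation made just before Proposition \ref{Diam2Prop}: any $k$-regular digraph of order $n$ and diameter at most two is transmission regular with transmission $t=2n-2-k$. Consequently $T(\dig)=(2n-2-k)\I_n$, and by definition $\DL(\dig)=(2n-2-k)\I_n-\D(\dig)$ and $\DQ(\dig)=(2n-2-k)\I_n+\D(\dig)$.

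The key structural point is that each of $\DL(\dig)$ and $\DQ(\dig)$ is an affine function of $\D(\dig)$, namely $c\I_n\pm\D(\dig)$. Such a shift leaves every (generalized) eigenvector of $\D(\dig)$ unchanged and sends the eigenvalue $\partial_i$ of $\D(\dig)$ to $c-\partial_i$ (for $\DL$) or $c+\partial_i$ (for $\DQ$), preserving both algebraic and geometric multiplicities. Thus from the preceding corollary, whose eigenvalues are $\partial_1=2n-2-k$ and $\partial_i=-2-\theta_i$ for $i=2,3$ with $\mult(\partial_i)=\mult(\theta_i)$, I would simply transport these through the shift.

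Carrying out the bookkeeping: for $\DL(\dig)$ I compute $(2n-2-k)-\partial_1=0$ and, for $i=2,3$, $(2n-2-k)-(-2-\theta_i)=2n-k+\theta_i$; substituting $\theta_2,\theta_3$ from Theorem \ref{t:DSRG-specA} gives the stated $\partial^L_2,\partial^L_3$. For $\DQ(\dig)$ I compute $(2n-2-k)+\partial_1=4n-4-2k$ and, for $i=2,3$, $(2n-2-k)+(-2-\theta_i)=2n-k-4-\theta_i$, again yielding the stated $\partial^Q_2,\partial^Q_3$ after substitution. Since the shift preserves multiplicities, $\mult(\partial^L_i)=\mult(\partial^Q_i)=\mult(\theta_i)$ in all three cases.

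There is essentially no genuine obstacle here; the argument is routine once transmission regularity is in place. The only point requiring minor care is the sign arithmetic in combining the constant $2n-2-k$ with the $-2$ already absorbed into each $\partial_i$, so that the spurious constants cancel correctly to produce $2n-k+\theta_i$ and $2n-k-4-\theta_i$; everything else is a direct appeal to the scalar-shift principle and the $\D$-spectrum corollary.
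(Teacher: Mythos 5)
Your proposal is correct and follows exactly the route the paper intends: the paper offers no separate proof, remarking only that since a DSRG is transmission regular (with $t=2n-2-k$), the $\DL$ and $\DQ$ spectra are immediate from the $\D$-spectrum corollary via the shifts $\DL(\dig)=t\I_n-\D(\dig)$ and $\DQ(\dig)=t\I_n+\D(\dig)$, which is precisely your argument. Your sign bookkeeping checks out in all six eigenvalue computations, and the multiplicity transfer via the scalar-shift principle is exactly the mechanism used in the paper's analogous propositions for Cartesian products.
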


Because  directed strongly regular graphs are out-regular,   the Laplacian and signless Laplacian eigenvalues of directed strongly regular graphs are also immediate from Theorem \ref{t:DSRG-specA}.

While the eigenvalues for $\A(\dig)$, $L(\dig)$, $Q(\dig)$, $\D(\dig)$, $\DL(\dig)$, and $\DQ(\dig)$ can be non-real, this is not true for most DSRGs. For a DSRG that is not equivalent to a graph and is not a doubly regular tournament $\dig(2k+1,k,0,a,a+1)$, Duval proved $(c-a)^2+4(s-c)=d^2$ for some positive integer $d$, which implies all eigenvalues of $\A(\dig)$, $L(\dig)$, $Q(\dig)$, $\D(\dig)$, $\DL(\dig)$, and $\DQ(\dig)$ are rational. In the case of graphs, it is well known that these spectra are real. Before we consider the only remaining case, we need the following lemma from Klin et al.

\begin{lemma}{\rm \cite{KMMZ04}}
Let $\dig$ be a regular non-empty digraph without doubly directed arcs. Then $\A(\dig)$ has at least one non-real eigenvalue.
\end{lemma}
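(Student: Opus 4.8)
The plan is to argue by contradiction: assume every eigenvalue of $\A(\dig)$ is real and derive a contradiction from a single trace computation. Write $A=\A(\dig)$, an $n\times n$ nonnegative $0$--$1$ matrix. The two structural hypotheses translate directly into statements about the entries of $A$: since a digraph has no loops, $A_{ii}=0$ for all $i$; and since $\dig$ has no doubly directed arcs, we never have both $A_{ij}=1$ and $A_{ji}=1$ for $i\ne j$, so $A_{ij}A_{ji}=0$ whenever $i\ne j$.

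First I would compute $\tr(A^2)$. Since $\tr(A^2)=\sum_{i}\sum_{j}A_{ij}A_{ji}$, the diagonal terms vanish because $A_{ii}=0$, and the off-diagonal terms vanish because $A_{ij}A_{ji}=0$; hence $\tr(A^2)=0$. On the other hand, passing to the Schur (or Jordan) form shows that $\tr(A^2)=\sum_{\ell=1}^n\lambda_\ell^2$, where $\lambda_1,\dots,\lambda_n$ are the eigenvalues of $A$ counted with algebraic multiplicity. Under the contradiction hypothesis all $\lambda_\ell\in\R$, so $\sum_\ell \lambda_\ell^2=0$ forces $\lambda_\ell=0$ for every $\ell$; that is, $A$ would be nilpotent with spectrum $\{0\}$.

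To finish, I would exhibit a nonzero eigenvalue. Because $\dig$ is regular with common degree $k$, each row sum of $A$ equals $k$, so $A\bone=k\bone$ and $k\in\spec(A)$; and because $\dig$ is non-empty it has at least one arc, forcing $k\ge 1$. Thus $k>0$ is a real eigenvalue of $A$, contradicting the conclusion that every eigenvalue is $0$. Therefore $A$ must have at least one non-real eigenvalue.

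I expect the only genuinely substantive step to be the identity $\tr(A^2)=0$, i.e.\ recognizing that ``no loops'' and ``no doubly directed arcs'' are precisely the conditions that make every diagonal entry of $A^2$ vanish; everything after that is the elementary observation that real numbers whose squares sum to zero must all be zero, combined with the standard fact that regularity produces the Perron-type eigenvalue $k$. Since no part of the argument uses any further property of $\dig$, the hypotheses are used sharply.
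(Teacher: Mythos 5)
Your proof is correct. Note, though, that the paper does not prove this lemma at all: it is quoted from Klin, Munemasa, Muzychuk, and Zieschang \cite{KMMZ04}, so there is no internal proof to compare against, and what you have supplied is a self-contained elementary argument. Each step checks out: no loops and no doubly directed arcs give $(A^2)_{ii}=\sum_j A_{ij}A_{ji}=0$ for every $i$, hence $\tr(A^2)=0$; by Schur triangularization $\tr(A^2)=\sum_\ell \lambda_\ell^2$ counted with algebraic multiplicity, so if all eigenvalues were real they would all vanish; and regularity plus non-emptiness gives $A\bone=k\bone$ with $k\ge 1$, a contradiction. You also use the hypotheses sharply, as you observe: without regularity (or some substitute forcing a positive spectral radius, such as strong connectivity) the statement fails --- a single arc yields a nilpotent adjacency matrix with purely real spectrum $\{0,0\}$ --- so the role of regularity in your argument is exactly to produce the nonzero real eigenvalue $k$. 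One cosmetic simplification: the contradiction framing is dispensable, since $0=\tr(A^2)=k^2+\sum_{\lambda_\ell\ne k}\lambda_\ell^2\ge 1$ is already impossible if all $\lambda_\ell$ are real.
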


Applying the previous lemma, we obtain the next result about instances of non-real  eigenvalues in a DSRG. 

\begin{corollary}
For the DSRG $\dig=\dig(n,k,s,a,c)$, the spectra of $\A(\dig)$, $L(\dig)$, $Q(\dig)$, $\D(\dig)$, $\DL(\dig)$, and $\DQ(\dig)$ contain non-real eigenvalues if and only if $\dig= \dig(2k+1,k,0,a,a+1)$.
\end{corollary}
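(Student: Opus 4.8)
The plan is to reduce the entire statement to a single question about $\spec_\A(\dig)$ and then read off the answer from the classification of DSRGs recalled above. First I would observe that, since $\dig$ is $k$-regular of diameter at most two, each of the five remaining matrices has a spectrum that is a fixed real affine image of $\spec_\A(\dig)$: one has $L(\dig)=k\I_n-\A(\dig)$ and $Q(\dig)=k\I_n+\A(\dig)$, while Proposition \ref{Diam2Prop} gives $\D(\dig)=\A(\dig)+2\A(\overline{\dig})$ with eigenvalues $2n-2-k$ (on $\bone_n$) and $-(\alpha_i+2)$ (for $\alpha_i\ne k$), and finally $\DL(\dig)=t\I_n-\D(\dig)$, $\DQ(\dig)=t\I_n+\D(\dig)$ with $t=2n-2-k$. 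Every such map $\alpha\mapsto c_0+c_1\alpha$ has real coefficients with $c_1=\pm1\ne 0$, hence sends real numbers to real numbers and non-real numbers to non-real numbers. Consequently all six spectra contain a non-real eigenvalue if and only if $\spec_\A(\dig)$ does.

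Next I would pin down when $\spec_\A(\dig)$ is non-real using Duval's Theorem \ref{t:DSRG-specA}. There $\theta_1=k$ is always real, and $\theta_2,\theta_3=\tfrac12\bigl(a-c\pm\sqrt{(c-a)^2+4(s-c)}\bigr)$ form a non-real (complex conjugate) pair exactly when the discriminant $(c-a)^2+4(s-c)$ is negative. So the corollary becomes equivalent to the statement that $(c-a)^2+4(s-c)<0$ holds precisely for $\dig=\dig(2k+1,k,0,a,a+1)$.

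To establish that equivalence I would invoke the trichotomy of DSRGs recorded before the statement. If $\dig$ is equivalent to an undirected graph, all six matrices are symmetric and hence have real spectra. If $\dig$ is neither graph-equivalent nor a doubly regular tournament, Duval's perfect-square result gives $(c-a)^2+4(s-c)=d^2\ge 0$, so $\theta_2,\theta_3$ are rational and every spectrum is again real. This proves the forward implication: a non-real eigenvalue forces $\dig=\dig(2k+1,k,0,a,a+1)$. For the converse I would use the Klin et al. lemma: the digraph $\dig(2k+1,k,0,a,a+1)$ has $s=0$, hence no doubly directed arc, and it is a nonempty $k$-regular digraph, so $\A(\dig)$ has a non-real eigenvalue; by the first paragraph, all six spectra then do as well.

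The heavy lifting is imported from the cited sources (Duval's discriminant-is-a-square theorem and the Klin et al. non-real-eigenvalue lemma), so the only genuine obstacle is the bookkeeping of the first paragraph: verifying carefully that every derived matrix is an honest real affine function of $\A(\dig)$ across the whole spectrum, and in particular that the Perron/$\bone_n$ eigenvalue remains real in each case, so that (non)reality transfers cleanly in both directions. This is routine given Proposition \ref{Diam2Prop} and $k$-regularity.
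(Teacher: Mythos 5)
Your proof is correct and follows essentially the same route as the paper's (largely implicit) argument: Duval's perfect-square discriminant result rules out non-real eigenvalues outside the doubly regular tournament case, symmetry handles graph-equivalent DSRGs, and the Klin et al.\ lemma (applicable since $s=0$ means no doubly directed arcs) supplies the non-real adjacency eigenvalue for $\dig(2k+1,k,0,a,a+1)$. Your first paragraph merely makes explicit the reduction of all six spectra to $\spec_\A(\dig)$ via the shifts $L=k\I_n-\A$, $Q=k\I_n+\A$, Proposition~\ref{Diam2Prop}, and $\DL=t\I_n-\D$, $\DQ=t\I_n+\D$, which the paper leaves tacit.
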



\bigskip
{\bf Acknowledgment.} The research of Minerva Catral was supported by a Faculty Development Leave from Xavier University. The research of Carolyn Reinhart was supported by NSF DMS 1839918.


\end{document}